\documentclass[11pt,a4paper]{article}
\usepackage[cp1251]{inputenc}
\usepackage{amsmath,amsthm}
\usepackage{amsfonts,amstext,amssymb,verbatim,epsfig}
\usepackage{dsfont}
\usepackage{enumerate}
\usepackage{psfrag}
\usepackage{graphicx}
\usepackage{color}
\usepackage{float}
\usepackage{subfig}

\usepackage[top=3cm, bottom=3.5cm, left=2.5cm, right=2.5cm]{geometry}

\sloppy 
\def\R{{\mathbb{R}}}

\def\Z{{\mathbb{Z}}}

\def \p{{\bf P1}} 
\def \pp{{\bf P2}} 
\def \ppp{{\bf P3}} 
\def \s{{\bf S1}} 
\def \sss{{\bf S2}}
\def \a{{\bf A1}}
\def \aa{{\bf A2}}
\def \aaa{{\bf A3}}
\def \aaaa{{\bf A4}}
\def \aaaaa{{\bf A5}}

\def \GG{{\mathbb G}} 
\def \ballZ{{\mathrm B}} 
\def \atom{\omega} 
\def \set{{\mathcal S}} 

\def \ballS{{\ballZ_{\scriptscriptstyle \set}}}

\def \constP{{\chi_{\scriptscriptstyle {\mathrm P}}}}
\def \constS{{\Delta_{\scriptscriptstyle {\mathrm S}}}}
\def \funcP{f_{\scriptscriptstyle {\mathrm P}}}
\def \funcS{f_{\scriptscriptstyle {\mathrm S}}}
\def \RP{R_{\scriptscriptstyle {\mathrm P}}}
\def \RS{R_{\scriptscriptstyle {\mathrm S}}}
\def \LP{L_{\scriptscriptstyle {\mathrm P}}}
\def \epsP{{\varepsilon_{\scriptscriptstyle {\mathrm P}}}}

\def \scexp{{\theta_{\scriptscriptstyle {\mathrm sc}}}}
\def \ei{{\theta_{\scriptscriptstyle {\mathrm{iso}}}}}

\def \setg{{\mathbf M}_A}
\def \setd{{D_A}}
\def \setgs{{\mathbb A}_s}

\newtheorem{theorem}{Theorem}[section]
\newtheorem{lemma}[theorem]{Lemma}
\newtheorem{proposition}[theorem]{Proposition}

\newtheoremstyle{likedef}
  {}%
  {}%
  {}%
  {}
  {\bfseries}%
  {.}%
  {.5em}%
  {}%

\theoremstyle{likedef}

\newtheorem{definition}[theorem]{Definition}
\newtheorem{remark}[theorem]{Remark}

\numberwithin{equation}{section}

\begin{document}

\title{Quenched invariance principle for simple random walk on clusters in correlated percolation models}

\author{
    $\begin{array}{cc}\text{Eviatar B. Procaccia}\\ \text{UCLA}\end{array}$
    $\begin{array}{cc}\text{Ron Rosenthal}\\\text{ETH Z\"urich}\end{array}$
    $\begin{array}{cc}\text{Art\"em Sapozhnikov}\\\text{MPI Leipzig}\end{array}$}

\date{}

\maketitle

\begin{abstract}
We prove a quenched invariance principle for simple random walk on the unique infinite percolation cluster for a general class 
of percolation models on $\Z^d$, $d\geq 2$, with long-range correlations introduced in \cite{DRS12}, 
solving one of the open problems from there. 
This gives new results for random interlacements in dimension $d\geq 3$ at every level, 
as well as for the vacant set of random interlacements and the level sets of the Gaussian free field in the regime of the so-called local uniqueness 
(which is believed to coincide with the whole supercritical regime). 

An essential ingredient of our proof is a new isoperimetric inequality for correlated percolation models. 
\end{abstract}

\section{Introduction and results}

Quenched invariance principles and heat kernel bounds for random walks 
on infinite percolation clusters and among i.i.d. random conductances in $\Z^d$ 
were proved during the last two decades 
(see \cite{Lawler83,MFGW85,MFGW89,SS04,BergerBiskup,MathieuPiatnitski,BiskupPrescott,Mathieu08,BD10,BD11,GZ12,ABDH13, ADS13}). 
The proofs of these results strongly rely on the i.i.d structure of the models and 
some stochastic domination with respect to super-critical Bernoulli percolation.

Many important models in probability theory and in statistical mechanics, 
in particular, models which come from real world phenomena, 
exhibit long range correlations and offer an incentive to create new tools capable of handling models with dependent structures. 
In recent years interest arose in understanding such systems, 
both in specific models such as random interlacements, 
vacant set of random interlacements and the Gaussian free field, 
as well as in general systems, see for example \cite{BLM, DRS12, PR, PoTe, RodSz, SidoraviciusSznitman_RI, SznitmanAM, Sznitman:Decoupling}. 
In the context of invariance principle with long range correlations 
one should emphasize the results of Biskup \cite{Biskup} and Andres, Deuschel, and Slowik \cite{ADS13}, 
that prove a quenched invariance principle for random walk in ergodic random conductances 
under some moment assumptions and ellipticity. 
In this paper we prove a quenched invariance principle for random walks on percolation clusters 
(i.e., in the non-elliptic situation) in the axiomatic framework of Drewitz, R\'ath, and Sapozhnikov \cite{DRS12}. 
This framework encompasses percolation models with strong correlations, including 
random interlacements, vacant set of random interlacements, 
and level sets of the Gaussian free field. 

The main novelty of our proof is a new isoperimetric inequality for correlated percolation models, see Theorem~\ref{thm:isoperimetric}.
We should emphasize that existing methods for proving isoperimetric inequalities (see, e.g., \cite{Barlow,BM03,BBHK,MathieuRemy,Pete08}) only apply to models which 
allow for comparison with Bernoulli percolation after a certain coarsening procedure. 
A common feature of the three examples above is that they cannot be effectively compared with Bernoulli percolation on {\it any} scale. 
Thus, the existing methods for proving isoperimetric inequalities do not apply. 
Our approach is more combinatorial in nature. It does not rely on any ``set counting'' arguments 
and the Liggett-Schonmann-Stacey theorem \cite {LSS}, 
and can be applied to models which do not dominate supercritical Bernoulli percolation after any coarsening.

\subsection{The model}

We consider a one parameter family of probability measures $\mathbb P^u$, $u\in(a,b)\subseteq \R_+$, on the measurable space $(\{0,1\}^{\Z^d},\mathcal F)$, $d\geq 2$, 
where the sigma-algebra $\mathcal F$ is generated by the canonical coordinate maps $\{\atom\mapsto\atom(x)\}_{x\in\Z^d}$.
The numbers $0\leq a<b$ as well as the dimension $d\geq 2$ are going to be fixed throughout the paper, 
and we omit the dependence of various constants on $a$, $b$, and $d$. 

For $x=(x(1),\dots,x(d))\in \R^d$, the $\ell^1$ and $\ell^\infty$ norms of $x$ are defined in the usual way by 
$|x|_1 = \sum_{i=1}^d|x(i)|$ and $|x|_\infty = \max\{|x(1)|,\ldots|x(d)|\}$, respectively.

For any $\atom\in\{0,1\}^{\Z^d}$, we define 
\[
\set = \set(\atom) = \{x\in\Z^d~:~\atom(x) = 1\} \subseteq \Z^d .\
\]
We view $\set$ as a subgraph of $\Z^d$ in which the edges are drawn between any two vertices of $\set$ within $\ell^1$-distance $1$ from each other. 
For $r\in [0,\infty]$, we denote by $\set_r$, the set of vertices of $\set$ 
which are in connected components of $\set$ of $\ell^1$-diameter $\geq r$. 
In particular, $\set_\infty$ is the subset of vertices of $\set$ which are in infinite connected components of $\set$.

An event $G \in \mathcal F$ is called \emph{increasing} (respectively, \emph{decreasing}), if
for all $\atom\in G$ and $\atom' \in \{0,1\}^{\Z^d}$ with $\atom(y) \leq \atom'(y)$ 
(respectively, $\atom(y) \geq \atom'(y)$) for all $y\in\Z^d$, one has $\atom' \in G$.

For $x \in \Z^d$ and $r \in \R_+$, we denote by 
$\ballZ(x,r) = \{y\in\Z^d~:~|x-y|_\infty\leq \lfloor r \rfloor \}$ the closed $l^{\infty}$-ball in $\Z^d$ with 
radius $\lfloor r \rfloor$ and center $x$.

\bigskip

We assume that {\it the measures $\mathbb P^u$, $u\in(a,b)$, satisfy the axioms from \cite{DRS12}}, which we now briefly list. 
The reader is referred to the original paper \cite{DRS12} for a discussion about this setup.

\begin{itemize}
\item[\p{}] {\it (Ergodicity)}
For each $u\in(a,b)$, every lattice shift is measure preserving and ergodic on $(\{0,1\}^{\Z^d},\mathcal F,\mathbb P^u)$.
\item[\pp{}] {\it (Monotonicity)}
For any $u,u'\in(a,b)$ with $u<u'$, and any increasing event $G\in\mathcal F$, 
$\mathbb P^u[G] \leq \mathbb P^{u'}[G]$.
\item[\ppp{}] {\it (Decoupling)}
Let $L\geq 1$ be an integer and $x_1,x_2\in\Z^d$. 
For $i\in\{1,2\}$, let $A_i\in\sigma(\{\atom\mapsto\atom(y)\}_{y\in \ballZ(x_i,10L)})$ be decreasing events, and 
$B_i\in\sigma(\{\atom\mapsto\atom(y)\}_{y\in \ballZ(x_i,10L)})$ increasing events. 
There exist $\RP,\LP <\infty$ and $\epsP,\constP>0$ such that for any integer $R\geq \RP$ and $a<\widehat u<u<b$ satisfying 
\[
u\geq \left(1 + R^{-\constP}\right)\cdot \widehat u ,\
\]
if $|x_1 - x_2|_\infty \geq R\cdot L$, then 
\[
\mathbb P^u\left[A_1\cap A_2\right] \leq 
\mathbb P^{\widehat u}\left[A_1\right] \cdot
\mathbb P^{\widehat u}\left[A_2\right] 
+ e^{-\funcP(L)} ,\
\]
and
\[
\mathbb P^{\widehat u}\left[B_1\cap B_2\right] \leq 
\mathbb P^u\left[B_1\right] \cdot
\mathbb P^u\left[B_2\right] 
+ e^{-\funcP(L)} ,\
\]
where $\funcP$ is a real valued function satisfying $\funcP(L) \geq e^{(\log L)^\epsP}$ for all $L\geq \LP$.
\item[\s{}] {\it (Local uniqueness)}
There exists a function $\funcS:(a,b)\times\Z_+\to \mathbb R$ such that for each $u\in(a,b)$,
\begin{equation}\label{eq:funcS}
\begin{array}{c}
\text{there exist $\constS = \constS(u)>0$ and $\RS = \RS(u)<\infty$}\\
\text{such that $\funcS(u,R) \geq (\log R)^{1+\constS}$ for all $R\geq \RS$,}
\end{array}
\end{equation}
and for all $u\in(a,b)$ and $R\geq 1$, the following inequalities are satisfied:
\begin{equation*}
\mathbb P^u\left[ \, 
\set_R\cap\ballZ(0,R) \neq \emptyset \,
\right]
\geq 
1 - e^{-\funcS(u,R)} , 
\end{equation*}
and
\begin{equation*}
\mathbb P^u\left[
\begin{array}{c}
\text{for all $x,y\in\set_{\scriptscriptstyle{R/10}}\cap\ballZ(0,R)$,}\\
\text{$x$ is connected to $y$ in $\set\cap\ballZ(0,2R)$}
\end{array}
\right]
\geq 1 - e^{-\funcS(u,R)} . 
\end{equation*}
\item[\sss{}] {\it (Continuity)}
Let $\eta(u) = \mathbb P^u\left[0\in\set_\infty\right]$. The function $\eta(\cdot)$ is positive and continuous on $(a,b)$. 
\end{itemize}
Note that if the family $\mathbb P^u$, $u\in(a,b)$, satisfies \s{}, then a union bound argument gives that for any $u\in(a,b)$, 
$\mathbb P^u$-a.s., the set $\set_\infty$ is non-empty and connected, 
and there exist $c_{\scriptscriptstyle 1}=c_{\scriptscriptstyle 1}(u)>0$ and $C_1 = C_1(u)<\infty$ such that for all $R\geq 1$, 
\begin{equation}\label{eq:C1:infty}
\mathbb P^u\left[ \,
\set_\infty\cap\ballZ(0,R) \neq \emptyset \, 
\right]
\geq 
1 - C_1 e^{-c_{\scriptscriptstyle 1}(\log R)^{1+\constS}} . 
\end{equation}
We will comment on the use of conditions \pp{}, \ppp{}, and \sss{} in Remark~\ref{rem:conditions}.

\subsection{Results}

For $\atom\in\{0,1\}^{\Z^d}$ and $x\in\set$, let $\deg_\atom (x) =\left|\{y\in\set ~:~ |y-x|_1=1 \}\right|$ be the degree of $x$ in $\set$, and let 
$\mathbf P_{\atom,x}$ be the distribution of the random walk $\{X_n\}_{n \geq 0}$ on $\set$ defined by the transition kernel
\[
\mathbf P_{\atom,x}[X_{n+1} = z|X_{n}=y] =
\left\{
    \begin{array}{ll}
        \frac{1}{2d}&\quad |z-y|_1=1,~z\in\set;\\
        1-\frac{\deg_\atom(y)}{2d}&\quad z=y;\\
        0 & \quad \text{otherwise,}
    \end{array}
    \right.
\]
and initial position $\mathbf P_{\atom,x}[X_0=x]=1$. 
For $n\in \mathbb N$, and $t\geq 0$, define 
\[
\widetilde B_n(t) = \frac{1}{\sqrt n}\left(X_{\lfloor tn\rfloor} + (tn - \lfloor tn\rfloor)\cdot (X_{\lfloor tn\rfloor + 1} - X_{\lfloor tn\rfloor})\right) .\
\]
Denote by $C[0,T]$ the space of continuous functions from $[0,T]$ to $\mathbb R^d$ equipped with supremum norm, 
and by $\mathcal W_T$ the Borel sigma-algebra on $C[0,T]$.
Our main result is the following theorem.
\begin{theorem}\label{thm:ip}
Let $d\geq 2$, and assume that the family of measures $\mathbb P^u$, $u\in (a,b)$, satisfies assumptions \p{} -- \ppp{} and \s{} -- \sss{}. 
Then for all $u\in(a,b)$, $T>0$, and for $\mathbb P^u[\cdot~|~0\in\set_\infty]$-almost every $\atom$, 
the law of $(\widetilde B_n(t))_{0\leq t\leq T}$ on $(C[0,T],\mathcal W_T)$ converges weakly 
to the law of a Brownian motion with zero drift and non-degenerate covariance matrix. 
In addition, if reflections and rotations of $\Z^d$ by $\frac{\pi}{2}$ preserve $\mathbb P^u$, 
then the limiting Brownian motion is isotropic (with positive diffusion constant).
\end{theorem}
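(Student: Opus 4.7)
The plan is to follow the Kipnis--Varadhan / Berger--Biskup / Mathieu--Piatnitski corrector method, inserting the axiomatic inputs \p{}--\sss{} and the new isoperimetric inequality (Theorem~\ref{thm:isoperimetric}) at the appropriate places. First I would set up the ``environment viewed from the walker'': let $\mathbb Q^u$ be the probability measure with density proportional to $\deg_\atom(0)$ with respect to $\mathbb P^u[\cdot \mid 0 \in \set_\infty]$. The process $\omega_n := \tau_{X_n}\atom$ is then a reversible Markov chain with stationary measure $\mathbb Q^u$, ergodic thanks to \p{} and the $\mathbb P^u$-a.s.\ uniqueness of $\set_\infty$ (which follows from \s{}). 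Via the standard $L^2$-theory of stationary, curl-free, mean-zero gradient fields on random graphs, I would then construct a cocycle $\chi(\atom, x, y)$ on $\set_\infty \times \set_\infty$ so that $\Phi(\atom, x) := x - \chi(\atom, 0, x)$ is discrete-harmonic for the walk; equivalently, $M_n := X_n - \chi(\atom, 0, X_n)$ is a $\mathbf P_{\atom, 0}$-martingale. Birkhoff's ergodic theorem applied to $\omega_n$ identifies $\tfrac{1}{n}\langle M\rangle_n$ with a deterministic matrix $\Sigma$; symmetry of $\mathbb P^u$ under lattice rotations/reflections, when assumed, forces $\Sigma$ to be a scalar multiple of the identity, and non-degeneracy follows from a standard Dirichlet-energy argument using transience of $\set_\infty$. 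The martingale CLT then yields an invariance principle for $M_{\lfloor tn\rfloor}/\sqrt n$.

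The remaining and principal step is sublinearity of the corrector along the walk, which reduces to
\[
\max_{x \in \set_\infty \cap \ballZ(0, n)} \frac{|\chi(\atom, 0, x)|}{n} \to 0 \quad \mathbb Q^u\text{-a.s.}
\]
I would split this into ``average sublinearity''---the convergence of mesoscopic spatial averages of $\chi$ to zero, which follows from the multidimensional ergodic theorem applied to the stationary cocycle under \p{}---plus an upgrade to the pointwise maximum. The upgrade is where Theorem~\ref{thm:isoperimetric} enters: combining it with the volume regularity of $\set_\infty \cap \ballZ(0, R)$ coming from \s{} and \eqref{eq:C1:infty}, I would run the Nash--Moser / Barlow \cite{Barlow} iteration to obtain an on-diagonal heat-kernel upper bound $p^\atom_n(x,x) \leq C n^{-d/2}$ with a Gaussian off-diagonal tail, valid for all $n$ exceeding a random threshold $n_0(\atom, x)$ with stretched-exponential tails. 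Feeding this upper bound and the harmonicity of $\chi$ into the standard comparison of $|\chi(\atom, 0, x)|$ with its averages over small balls around $x$ then promotes average to pointwise sublinearity, and a routine tightness argument completes the invariance principle.

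The hard part is the heat-kernel upper bound in the previous step. All classical proofs of such bounds \cite{Barlow,BM03,MathieuRemy,Pete08} proceed by coarse-graining the random graph to a scale where stochastic domination by supercritical Bernoulli percolation is available, and as emphasised in the introduction this route is unavailable for random interlacements, its vacant set, or level sets of the Gaussian free field. The genuinely new ingredient is therefore the combinatorial isoperimetric inequality of Theorem~\ref{thm:isoperimetric}, applied directly to the cluster without any intermediate Bernoulli coarsening. Once it is in hand, the decoupling axiom \ppp{} and the continuity \sss{} enter only to verify the hypotheses of that theorem and to handle rare geometric defects near the origin; the rest of the argument is the by-now classical blueprint for quenched invariance principles on random graphs.
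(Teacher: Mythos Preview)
Your overall architecture---corrector, martingale CLT, and sublinearity of the corrector---matches the paper, but the mechanism you propose for upgrading average sublinearity to the pointwise maximum has a genuine gap. You want to run the Barlow iteration to get on-diagonal \emph{and Gaussian off-diagonal} heat-kernel bounds and then use a mean-value comparison for the harmonic corrector. The paper explicitly flags this route as blocked: Theorem~\ref{thm:isoperimetric} controls $\partial_{\set}A$, the boundary of $A$ in the full occupied set $\set$, \emph{not} the boundary of $A$ in $\mathcal C_R$. Barlow's argument (see \cite[Proposition~2.11]{Barlow}) requires the latter, namely that $|\partial_{\mathcal C_R}A|\geq cR^{-1}|A|$ for all $A\subset\mathcal C_R$ with $|A|\leq \tfrac12|\mathcal C_R|$, and the paper states that obtaining Gaussian heat-kernel bounds under \p{}--\sss{} remains an open problem (Remark~(2) after Theorem~\ref{thm:isoperimetric}). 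So the step ``Gaussian off-diagonal tail $\Rightarrow$ pointwise sublinearity'' is not available.

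The paper avoids this by following the softer Biskup--Prescott scheme \cite[Theorem~2.4]{BiskupPrescott}: the isoperimetric inequality of Theorem~\ref{thm:isoperimetric} is used only to verify the on-diagonal bound \eqref{eq:bp18} and the expected-displacement bound \eqref{eq:bp17}, neither of which needs the boundary-in-$\mathcal C_R$ inequality. Average sublinearity is obtained not by a multidimensional ergodic theorem but via one-dimensional sublinearity along coordinate axes (Berger--Biskup \cite[Theorem~4.1]{BergerBiskup}), which in turn needs finiteness of $\mathbb E_0[|\chi(n_e,\cdot)|]$; this is where the chemical-distance estimate of \cite[Theorem~1.3]{DRS12} (condition \aaaa{}) and a new line-intersection estimate (condition \aaa{}, proved separately in Section~\ref{sec:ip}) enter---ingredients your sketch does not mention. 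Finally, your non-degeneracy argument invoking transience of $\set_\infty$ fails for $d=2$; the paper instead deduces non-degeneracy directly from sublinearity of the corrector.
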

The proof of Theorem~\ref{thm:ip} is based on the well-known construction of the corrector. 
Moreover, it closely follows the proofs of the main results in \cite{BergerBiskup,BiskupPrescott} using \cite[Theorem~1.3]{DRS12}
about chemical distance in $\set$, and Theorem~\ref{thm:isoperimetric} below, which is the main novelty of this paper. 
\begin{theorem}\label{thm:isoperimetric}
Let $d\geq 2$ and $\ei>0$. 
For $A\subset \set$, let $\partial_\set A$ be the edge boundary of $A$ in $\set$, i.e., 
the set of edges from $\Z^d$ with one end-vertex in $A$ and the other in $\set\setminus A$. 
For $R\geq 1$, let $\mathcal C_R$ be a largest connected component (in volume, with ties broken arbitrarily) of $\mathcal S\cap\ballZ(0,R)$. 

If the family of measures $\mathbb P^u$, $u\in (a,b)$, satisfies assumptions \p{} -- \ppp{} and \s{} -- \sss{}, 
then for each $u\in(a,b)$, there exist $\gamma_{\scriptscriptstyle \ref{thm:isoperimetric}} = \gamma_{\scriptscriptstyle \ref{thm:isoperimetric}}(u)>0$, 
$c=c(u,\ei)>0$, and $C=C(u,\ei)<\infty$ such that for all $R\geq 1$, 
\begin{equation}\label{eq:isoperimetric}
\mathbb P^u\left[
\begin{array}{c}
\text{for any $A\subset\mathcal C_R$ with $|A|\geq R^{\ei}$,}\\ 
\text{$|\partial_\set A| \geq \gamma_{\scriptscriptstyle \ref{thm:isoperimetric}}\cdot |A|^{\frac{d-1}{d}}$}
\end{array}
\right]\geq 1 - Ce^{-c(\log R)^{1+\constS}} .\ 
\end{equation}
\end{theorem}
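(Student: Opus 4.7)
The plan is a coarse-graining argument in the spirit of Barlow and Pete, but avoiding any appeal to the Liggett--Schonmann--Stacey theorem or to stochastic domination by Bernoulli percolation, since no such domination is available under \p{}--\sss{}. Fix a mesoscopic scale $L = L(R)$, chosen to be a small power of $R$ so that $L^d \ll R^{\ei}$, and tile $\ballZ(0,R)$ by disjoint $L$-boxes $\{Q_z\}_{z\in\Z^d}$. I declare $Q_z$ \emph{good} if the two conclusions of \s{} hold at scale $L$ on the concentric doubled box $\widetilde Q_z$: there is a subset $K_z$ of $\set\cap\widetilde Q_z$ that contains every cluster of $\set\cap\widetilde Q_z$ of $\ell^1$-diameter $\geq L/10$, is itself connected in $\set\cap\widetilde Q_z$, and has volume of order $\eta(u)L^d$. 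A pointwise bound on $\mathbb P^u[Q_z\text{ is bad}]$ follows from \s{}; the substantive task of the first step is to leverage \ppp{} so as to exclude, with probability at least $1-Ce^{-c(\log R)^{1+\constS}}$, the existence in $\ballZ(0,R)$ of any $\ast$-connected cluster of bad boxes of coarse diameter larger than a slowly growing $r_0(R)$.

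On that event, the chemical-distance estimate \cite[Theorem~1.3]{DRS12} together with the coincidence of local giants in adjacent good boxes identifies $\mathcal C_R$ with the union of the $K_z$ over the good boxes $Q_z$ in a single coarse-connected component. Every $A\subset\mathcal C_R$ with $|A|\geq R^{\ei}$ then has a well-defined coarse profile $z\mapsto p_z := |A\cap K_z|/|K_z|\in[0,1]$, and $A$ meets at least of order $R^{\ei}/L^d$ good boxes. Fix a small $\delta=\delta(u)>0$ and classify each good box as \emph{full} ($p_z\geq 1-\delta$), \emph{empty} ($p_z\leq \delta$), or \emph{mixed}. In every mixed good box, the connectedness of $K_z$ in $\widetilde Q_z$ combined with the two-sided volume lower bounds on $A\cap K_z$ and $K_z\setminus A$ forces the existence of at least $cL^{d-1}$ edges of $\partial_\set A$ inside $\widetilde Q_z$; this is a deterministic cut estimate in a connected subgraph of a box of side $2L$, whose vertex set has cross-sectional area at most $(2L)^{d-1}$.

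The full boxes form a subset $\mathcal F$ of the coarse lattice, and each coarse edge of $\partial\mathcal F$ contributes at least one edge of $\partial_\set A$ via the coincidence of local giants in adjacent good boxes. Applying the Euclidean isoperimetric inequality on $\Z^d$ to $\mathcal F$ and using the bulk bound $|A|\leq(|\mathcal F|+|\text{mixed}|)\,L^d+\delta|A|$ shows that at least one of the following holds: either $|\text{mixed}|\cdot L^{d-1}\gtrsim |A|^{(d-1)/d}$, or $|\mathcal F|^{(d-1)/d}\gtrsim (|A|/L^d)^{(d-1)/d}$. Tuning the exponent in $L$ as a function of $\ei$, both alternatives produce $|\partial_\set A|\geq\gamma_{\scriptscriptstyle \ref{thm:isoperimetric}}|A|^{(d-1)/d}$, and the failure probability is dominated by that of the ``no large bad cluster'' event from the first step.

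The main obstacle is precisely that first step: converting the pairwise decoupling \ppp{} into an exponential estimate for connected clusters of bad boxes. Because \ppp{} controls the two-point probability only up to an additive error $e^{-\funcP(L)}$ and needs a scale-separation $|x_1-x_2|_\infty\geq R\cdot L$, a naive induction over a $k$-cluster loses this error at each step rather than multiplying it. I expect to need a renormalization cascade: choose a decreasing sequence $u>\widehat u_1>\widehat u_2>\dots$ inside $(a,b)$ (valid by \sss{} and \pp{}), and at each level extract a well-separated sub-family of bad boxes, so that each application of \ppp{} pays a factor of $e^{-\funcP(L)}$ per extracted box while the parameter drifts by only $R^{-\constP}$. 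A secondary technical issue is making the within-box cut estimate $cL^{d-1}$ quantitative; if the direct combinatorial argument falls short, an inductive application of Theorem~\ref{thm:isoperimetric} at scale $L$ with an exponent slightly larger than $\ei$ should close the bootstrap.
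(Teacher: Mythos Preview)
Your plan has a genuine gap at the step you flag as a ``secondary technical issue'': the claim that a mixed good box contributes at least $cL^{d-1}$ edges of $\partial_\set A$. This is \emph{not} a deterministic cut estimate that follows from connectedness of $K_z$ and a two-sided volume bound on $A\cap K_z$. A connected subgraph of a box of side $2L$ with volume of order $L^d$ can have arbitrarily bad isoperimetry: take two blobs of volume $\sim L^d/2$ joined by a single edge. Nothing in \s{} rules this out for $K_z$, so all you are entitled to is one boundary edge per mixed box. The same problem reappears in the full-box branch of your dichotomy: a coarse edge of $\partial\mathcal F$ between a full box and an empty box gives, via coincidence of local giants, a single path crossing from $A$ to $\set\setminus A$, hence one edge of $\partial_\set A$, not $L^{d-1}$. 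Your full-box branch then yields only $|\partial_\set A|\gtrsim|\mathcal F|^{(d-1)/d}\gtrsim |A|^{(d-1)/d}/L^{d-1}$, which is off by a factor $L^{d-1}$ that no tuning of $L$ can absorb. Your proposed fallback of applying Theorem~\ref{thm:isoperimetric} inductively at scale $L$ is circular: there is no base case, and the isoperimetric constant would not survive the iteration.

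This local $L^{d-1}$ cut estimate is precisely the crux of the theorem, and the paper's main contribution is a mechanism for proving it. The paper does \emph{not} work at a single mesoscopic scale. It runs a full multi-scale renormalization (Section~\ref{sec:renormalization}, scales $L_k$) and builds a set $\mathbf G$ of $0$-good boxes that is connected not only in $\GG_0$ but in every two-dimensional affine slice (Proposition~\ref{prop:Gproperties}(c)). This slice-connectivity, which requires the hierarchical perforation and an additional deterministic sparsening at each level, is what drives the local estimate: Lemma~\ref{l:isopa} shows that in any $3L_s$-box where the coarse set $\mathbf a$ has balanced density, $|\partial_{\mathbf g}\mathbf a|\geq cL_s^{d-1}$, and its proof (Lemma~\ref{l:isopaj}) is an induction on the ambient dimension $j$ reducing to $j=2$, where the two-dimensional slice structure of $\mathbf G$ is used to compare $\partial_{\mathbf g}$ with the full $\Z^2$-boundary via \eqref{eq:relationboundaries}. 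Your single-scale good/bad dichotomy carries no such structure, so the local estimate is simply unavailable. As a secondary point, your Step~1 (excluding large $\ast$-connected bad clusters) is a Peierls-type set-counting argument; the paper explicitly avoids this because the stretched-exponential error in \ppp{} does not beat the entropy of lattice animals. Your ``renormalization cascade'' gestures in the right direction, but what is actually needed is the recursive definition of $k$-good boxes and the resulting doubly-exponential decay of Lemma~\ref{l:Guxk}, which then controls the \emph{geometry} of the bad set (at most two bad sub-blocks per block, at every scale) rather than merely its probability.
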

\begin{remark}
\begin{itemize}\itemsep0pt
\item[(1)]
As we will see in the proof of Theorem~\ref{thm:isoperimetric}, under assumptions \p{} -- \ppp{} and \s{} -- \sss{}, 
for each $u\in(a,b)$, with $\mathbb P^u$-probability $\geq 1 - Ce^{-c(\log R)^{1+\constS}}$, there is 
a unique cluster of largest volume in $\set\cap\ballZ(0,R)$.
\item[(2)]
Note that we consider here the boundary of $A$ in $\set$, and not in $\mathcal C_R$. 
This is enough for our purposes. 
The first proofs of the quenched invariance principle for simple random walk on the infinite cluster of Bernoulli percolation \cite{BergerBiskup,MathieuPiatnitski,SS04} 
crucially relied on the Gaussian upper bound on $\mathbf P_{\atom,0}[X_n = x]$ obtained in \cite{Barlow}. 
To prove the desired bound (as well as the corresponding Gaussian lower bound) one needs to show that with 
$\mathbb P^u$-probability $\geq 1 - Ce^{-c(\log R)^{1+\constS}}$, 
the boundary {\it in $\mathcal C_R$} of any $A\subset\mathcal C_R$ such that $|A|\leq \frac 12\cdot |\mathcal C_R|$ has size $\geq c\cdot R^{-1}\cdot |A|$, 
see, e.g., \cite[Proposition~2.11]{Barlow}. 
Thanks to simplifications obtained in \cite{BiskupPrescott}, we do not need to prove such a statement in order to deduce Theorem~\ref{thm:ip}. 
Showing that the Gaussian bounds on the transition density hold under assumptions \p{} -- \ppp{} and \s{} -- \sss{} remains an open problem. 
\item[(3)]
In fact, we do not need the full strength of Theorem~\ref{thm:isoperimetric} to prove Theorem~\ref{thm:ip}, see 
assumption \aaaaa{} in Section~\ref{sec:ipa}.
\item[(4)]
Theorem~\ref{thm:isoperimetric} implies that under the assumptions \p{} -- \ppp{} and \s{} -- \sss{}, 
for any $u\in(a,b)$ and $\mathbb P^u[\cdot~|~0\in\set_\infty]$-almost every $\atom$, 
there exists $K_u = K_u(\atom)<\infty$ such that 
for all $n\geq 1$ and $x\in\Z^d$, $\mathbf P_{\atom,0}[X_n = x] \leq K_u\cdot n^{-d/2}$, see \eqref{eq:bp18}.
This is also a new result, even for the specific models such as random interlacements, vacant set of random interlacements, and 
the level sets of the Gaussian free field. 
In the context of random interlacements, a bound close to optimal (with a correcting factor of a multiple logarithm) 
was obtained in \cite{PrShel}.
\item[(5)]
Theorem~\ref{thm:ip} implies that for any $u\in(a,b)$ and $\mathbb P^u[\cdot~|~0\in\set_\infty]$-almost every $\atom$, 
there exists $k_u = k_u(\atom)>0$ such that 
for all $n\geq 1$, $\mathbf P_{\atom,0}[X_{2n} = 0] \geq k_u\cdot n^{-d/2}$, see \cite[Remark~2.2]{BiskupPrescott}.
\end{itemize}
\end{remark}

Analogue of Theorem~\ref{thm:isoperimetric} has only been known before for independent Bernoulli percolation, 
see, e.g., \cite{Barlow,BM03,BBHK,MathieuRemy,Pete08}.
All these proofs rely crucially on a ``set counting argument'' and thus require exponential decay of probabilities of certain events. 
This is achieved by using Liggett-Schonmann-Stacey theorem \cite{LSS}. 
Such approach is quite restrictive and does not apply to models which cannot be compared with Bernoulli percolation on any scale, such as, 
for example, random interlacements. 
Our method is more robust and requires only minimal assumptions on the decay of probabilities of some events. 

\bigskip

We will now comment on the proof of Theorem~\ref{thm:isoperimetric}. 
As in all the proofs of isoperimetric inequalities for subsets of the infinite cluster of Bernoulli percolation, 
we set up a proper coarsening of $\set$ and then translate the given isoperimetric problem for large subsets of $\set$ into 
an isoperimetric problem on the coarsened lattice. 
Nevertheless, both the coarsening and the analysis of the coarsened lattice are very different from the ones used in existing approaches. 
The major difficulties, as already discussed, come from the presence of long-range correlations and the fact that 
the models cannot in general be compared with Bernoulli percolation, which rules out possibilities of using any Peierls-type argument. 

We partition the lattice $\Z^d$ into boxes $(x+ [0,L_0)^d)$, $x\in L_0\cdot\Z^d$, and subdivide the set of all boxes into {\it good} (very likely as $L_0\to\infty$) and {\it bad}. 
In the restriction of $\set$ to each of the good boxes, it is possible to identify uniquely a connected component of {\it largest volume}, which we call {\it special}, see Lemma~\ref{l:fromG0toZd}(a).  
Moreover, for any pair of adjacent good boxes, their special connected components are connected locally, see Lemma~\ref{l:fromG0toZd}(b). 
We emphasize that a good box may contain several connected components of large diameter, and in principle a connected component with the largest diameter may be different from the special component. 
This is the key difference of the coarsening procedure that we use from the ones used in the study of Bernoulli percolation. 
The main reason for doing this is that a box is good if two local events occur, one of which is increasing (there exists a large in volume connected component of $\set$ in the box) 
and the other decreasing (the cardinality of $\set$ in the box is not so big), see Definitions~\ref{def:Axu} and \ref{def:Bux}. 
Using \ppp{} to control correlations between such monotone events, 
we set up two multi-scale renormalizations with scales $L_n$ (one for increasing and one for decreasing events) to identify with high probability a well structured subset of good boxes. 
We call a $L_n$-box $n$-good if all the $L_{n-1}$-subboxes of this box, except for the ones contained in the union of at most two boxes of side length $r_{n-1}L_{n-1}$, 
are $(n-1)$-good. (Here $r_i$ is a sequence of positive integers growing to infinity, but much slower than $\frac{L_{i+1}}{L_i}$.) 
Every $L_n$-box is $n$-good with overwhelming probability. 
We are interested in the set of ($0$-)good boxes which are contained in $n$-good boxes for all $n\geq 1$. 
This set is obtained by a perforation of $\Z^d$ on multiple levels, and therefore has a well described structure. 
Indeed, from each $n$-good box, we delete two boxes of size length $r_{n-1}L_{n-1}$, from each of the remaining $L_{n-1}$-boxes (all of which are $(n-1)$-good), 
we delete two boxes of side length $r_{n-2}L_{n-2}$, and so on until we reach the level $0$. 
Moreover, if $r_i\ll \frac{L_{i+1}}{L_i}$, then the set of all deleted boxes (the complement of the good set) has a small volume. 
We should mention that such coarsening and renormalization have already been used before in \cite{DRS12,RS:Disordered} to study models with long-range correlations satisfying assumption \ppp{}. 

We need to further sparsen the obtained set of good boxes to make sure that it has good connectivity properties. 
This is done by a ``deterministic'' multi-level perforation of $\Z^d$, where from each $n$-good box, we 
delete yet at most one box of side length $r_{n-1}L_{n-1}$ depending on the location of two already deleted boxes of side length $r_{n-1}L_{n-1}$. 
For example, if two boxes of side length $r_{n-1}L_{n-1}$ are deleted near an edge of an $L_n$-box, then we delete 
another box of side length $r_{n-1}L_{n-1}$ at the edge, see Figures~\ref{fig:mathcalG} and \ref{fig:Gconstruction}.
During this discussion, we call the resulting (connected) set of good boxes {\it fat}. 
The fat set is not only connected in the lattice of $L_0$-boxes, but also 
its restriction to any lower dimensional sublattice $(x + \sum_{i=1}^j\Z\cdot e_i)$ is connected, where $x\in L_0\cdot \Z^d$, $2\leq j\leq d$, and $e_i\in\Z^d$ are pairwise orthogonal unit vectors, 
see Proposition~\ref{prop:Gproperties}. 
This property is crucially used in the proof of an isoperimetric inequality for subsets of fat set, but we will come to that.  

If the renormalization scales $L_n$ are growing fast enough, then the restriction of the fat set to the box $\ballZ(0,R)$ serves as a coarsening of the largest connected component $\mathcal C_R$, 
which also ensures uniqueness of $\mathcal C_R$. 
We would like to reduce the isoperimetric problem for large subsets of $\mathcal C_R$ to an isoperimetric inequality for large subsets of good boxes of the fat set. 
The main obstruction here is that our coarsening allows to identify a subset of $\mathcal C_R$ of large volume (union of special components of good boxes), but 
the remaining parts of $\mathcal C_R$ may contain long dangling ends with bad isoperimetric properties. 
We resolve this issue by two requirements on the set of configurations that we consider.
First of all, since we do not have any control of how $\set$ looks like in the ``deleted'' boxes of side length $r_{n-1}L_{n-1}$, 
we should at least make sure that each deleted region is not too big in comparison with $R^\ei$, the minimal size of sets which we consider. 
We require that on a level $s$ of renormalization such that $L_s^{3d^2}\leq R^\ei$ (see \eqref{def:s}), all the $L_s$-boxes intersecting $\ballZ(0,2R)$ are $s$-good, 
i.e., the biggest box that we ``delete'' has side length at most $r_{s-1}L_{s-1}$. 
Second, to get a partial control of connectivities in the dangling ends, 
we require that any $x,y\in\set_{L_s}\cap\ballZ(0,2R)$ such that $|x-y|_\infty<2L_s$ are connected in $\ballZ(x,4L_s)$. 
Configurations satisfying these assumptions form an event of high probability, and next we consider only configurations from this event. 

Given $A\subset\mathcal C_R$ such that $|A|\geq R^{\ei}$, we identify a subset $\setg$ of good boxes in the fat set for which $A$ intersects the special connected component. 
If $|\setg|$ is small, then we show that the boundary of $A$ is very large ($\geq c\cdot \frac{|A|}{L_s^d}$). 
The reason for this is that while most of the vertices $x$ of $A$ are not in special connected components of good boxes in the fat set, 
each of them is within distance at most $L_s$ from the fat set (all $L_s$-boxes in $\ballZ(0,2R)$ are $s$-good), and thus from $\mathcal C_R\setminus A$. 
The weak connectivity assumption then makes sure that there is an edge of $\partial_\set A$ in $\ballZ(x,4L_s)$, see Lemma~\ref{l:isopmain}. 
On the other hand, if $|\setg|$ is large, then we prove that it satisfies an isoperimetric inequality in the graph of good boxes, see Lemma~\ref{l:isopbaldG}. 
Noting that for any pair of good boxes from the boundary of $\setg$, their special connected components are locally connected, 
one of the special components intersects $A$ and the other does not, 
we obtain a lower bound on $|\partial_\set A|$ in terms of the size of the boundary of $\setg$, see \eqref{eq:isopmain2}.   

It remains to prove the isoperimetric inequality for large subsets $\mathbf A$ of good boxes of the fat set. 
We consider the set $\setgs$ of disjoint $L_s$-boxes such that at least half of $L_0$-boxes contained in it are from $\mathbf A$, see \eqref{eq:setgs}. 
Again, if $|\setgs|<C\cdot\frac{|\mathbf A|}{L_s^d}$, then the boundary of $\mathbf A$ is at least $c\cdot \frac{|\mathbf A|}{L_s^d}$. 
The interesting case is when $|\setgs|\geq C\cdot\frac{|\mathbf A|}{L_s^d}$. 
By the isoperimetric inequality in the lattice of $L_s$-boxes, the boundary of $\setgs$ has size $\geq c\cdot |\setgs|^{\frac{d-1}{d}} \geq c\cdot \frac{|\mathbf A|^{\frac{d-1}{d}}}{L_s^{d-1}}$.
We, roughly speaking, estimate the boundary of $\mathbf A$ from below 
by the part of its boundary restricted to (disjoint) $L_s$-boxes from the boundary of $\setgs$ and show that the restrictions to all the boxes are of size $\geq c\cdot L_s^{d-1}$. 
Thus the boundary of $\mathbf A$ contains $c\cdot \frac{|\mathbf A|^{\frac{d-1}{d}}}{L_s^{d-1}}$ disjoint pieces of size $c\cdot L_s^{d-1}$, and we are done. 

To be precise, for any pair of adjacent $L_s$-boxes from the boundary of $\setgs$, one box has large intersection with $\mathbf A$, and the other small. 
Therefore, the intersection of $\mathbf A$ with a box of side length $3L_s$ containing both $L_s$-boxes is comparable in size with its complement in this $3L_s$-box. 
We show that the boundary of $\mathbf A$ in any such $3L_s$-box is at least $c\cdot L_s^{d-1}$, see Lemma~\ref{l:isopa}. 
For this we prove a stronger statement that the restriction of $\mathbf A$ to $j$-dimensional subboxes ($2\leq j\leq d$) of a given $3L_s$-box 
which contain a non-trivial (bounded away from $0$ and $1$) density of vertices from $\mathbf A$
satisfies an isoperimetric inequality in those subboxes, i.e., its boundary in the graph of good boxes in the $j$-dimensional subbox has size $\geq c_j\cdot L_s^{j-1}$, 
see Lemma~\ref{l:isopaj}. 
The last statement is proved by induction on $j$. 

In the case $j=2$, we first reduce the problem to {\it connected} sets with complement consisting of large connected components, see \eqref{eq:isopaconnected}. 
Then by using the precise construction of the fat set (from each $n$-good box we delete at most $3$ boxes of side length $r_{n-1}L_{n-1}$), 
we show that the boundary of the set in the graph of good boxes has almost the same size as 
the boundary of the set in $L_0\cdot \Z^2$, i.e., the part of the boundary of the set which touches some ``deleted'' boxes is small, see \eqref{eq:relationboundaries}. 
In the case $j\geq 3$, we use a dimension reduction argument. 
We first consider $(j-1)$-dimensional subcubes (slices) of a given $j$-dimensional subcube which are stacked along one particular coordinate direction. 
If there is a positive fraction of slices which have large intersections with $\mathbf A$ and its complement, then we use induction assumption for these slices. 
Otherwise, we conclude that there are two large disjoint subsets of slices, those that contain many vertices from $\mathbf A$ and very few from its complement, 
and those that contain few vertices from $\mathbf A$ and many from its complement (overcrowded and undercrowded slices). 
We then consider two-dimensional slices that intersect all these $(j-1)$-dimensional slices, see Figure~\ref{fig:slices}.
Most of them will have large intersection with $\mathbf A$ as well as with its complement. 
We conclude by using the isoperimetric inequality in each of these two dimensional slices (case $j=2$).

\bigskip

\subsection{Examples}

It is well known that classical supercritical Bernoulli percolation satisfies all the requirements \p{} -- \ppp{} and \s{} -- \sss{}. 
The main focus of this paper is on models with long range correlations, especially the ones that cannot be studied by comparison with Bernoulli percolation 
on any scale. 
The following models with {\it polynomial decay of correlations} are known to satisfy all the requirements \p{} -- \ppp{} and \s{} -- \sss{}, 
see \cite[Section~2]{DRS12}:
\begin{itemize}\itemsep0pt
\item[(a)]
{\it random interlacements} at any level $u>0$ (see \cite{SznitmanAM});
\item[(b)]
{\it vacant set of random interlacements} at level $u$ (see \cite{SznitmanAM,SidoraviciusSznitman_RI}) 
in the (non-empty) regime of the so-called local uniqueness;
\item[(c)]
{\it level sets of the Gaussian free field} (see \cite{LS86,RodSz}) 
also in the (non-empty) regime of local uniqueness;
\end{itemize}
The regime of local uniqueness is basically described by those values of $u$ for which \s{} is fulfilled. 
It was shown that the regime of local uniqueness is non-empty for the vacant set of random interlacements in \cite[Theorem~1.1]{DRS}, 
and for the level sets of the Gaussian free field in \cite[Theorem~2.6]{DRS12}. 
In the case of Bernoulli percolation, it is well known that the regime of local uniqueness coincides with the whole supercritical phase, 
and, based on this, it is believed that the same is true for the models in (b) and (c). 
It was proved in \cite{DRS12} that both models satisfy all the requirements except for \s{} in the {\it whole} supercritical regime. 
Thus, in order to extend the results of this paper to the whole supercritical phase in models (b) and (c), it suffices to check that 
\s{} is satisfied for all supercritical values of $u$. Currently, this remains an open problem.

\subsection{Structure of the paper}
In Section~\ref{sec:renormalization}, we recall the renormalization scheme of \cite{DRS12}. 
Theorem~\ref{thm:isoperimetric} is proved in Section~\ref{sec:isoperimetric} (see the beginning of the section for 
a detailed description of its content). 
In Section~\ref{sec:ipa} we state a quenched invariance principle for random walk on the infinite percolation cluster of 
a random subset of $\Z^d$ satisfying a list of general conditions. 
We show that these conditions are implied by \p{} -- \ppp{} and \s{} -- \sss{} in Section~\ref{sec:ip}. 
We discuss possible weakenings of assumption \p{} in Section~\ref{sec:ergodicityremarks}. 
Last, in Section~\ref{sec:ipaproof} we give a sketch proof of the general quenched invariance principle stated in Section~\ref{sec:ipa}; 
this is a routine adaptation of techniques present in the literature.

\bigskip

Finally, let us make a convention about constants. 
As already mentioned, we omit the dependence of various constants on $a$, $b$, and $d$ from the notation.
Dependence on other parameters is reflected in the notation, for example, as $c(u,\ei)$.

\section{Renormalization}
\label{sec:renormalization}

In this section we recall the renormalization scheme from \cite[Sections~3-5]{DRS12}. 
(Some ideas are already present in \cite{RS:Disordered} in the context of random interlacements and its vacant set.) 
The goal is to define a coarsening of $\set$ using monotone events from Definitions~\ref{def:Axu} and \ref{def:Bux}, and 
identify its connectivity patterns using a multi-scale renormalization with scales $L_n$, see \eqref{def:scales}, \eqref{def:Auxk}, \eqref{def:Buxk}, and Lemma~\ref{l:Guxk}. 
The key notion is of $k$-good vertices (boxes), see Definition~\ref{def:good} and Lemma~\ref{l:Guxk}. 
The main property of a $0$-good box is that it contains a unique connected component of $\set$ with largest volume, see Lemma~\ref{l:fromG0toZd}(a), 
and for any pair of adjacent good boxes, their unique largest connected components are connected locally, see Lemma~\ref{l:fromG0toZd}(b). 
For $k\geq 1$, the $k$-good box is defined recursively such that  
all its $(k-1)$-bad subboxes are contained in at most two subboxes of side length $r_{k-1}L_{k-1}$, where $r_{k-1}L_{k-1}\ll L_k$, see Definition~\ref{def:good}. 

\medskip  

Let $\scexp = \lceil 1/\epsP \rceil$, where $\epsP$ is defined in \ppp{}. Let $r_0$, $l_0$, and $L_0$ be positive integers. 
(Later in the proofs, we will assume that these integers are sufficiently large, and that the ratio $\frac{r_0}{l_0}$ is sufficiently small, 
see the discussion before Section~\ref{sec:eventH}.)
Consider the sequences of positive integers
\begin{equation}\label{def:scales}
l_k = l_0\cdot 4^{k^\scexp},\qquad r_k = r_0\cdot 2^{k^\scexp},\qquad L_k = l_{k-1}\cdot L_{k-1}, \quad k\geq 1.
\end{equation}
For $k\geq 0$, we introduce the renormalized lattice graph $\GG_k$ by
\begin{equation*}
\GG_k = L_k \Z^d = \{L_kx ~:~ x\in\Z^d\} ,\
\end{equation*}
with edges between any pair of $\ell^1$-nearest neighbor vertices of $\GG_k$. 
\begin{definition}\label{def:Axu}
For $x\in \GG_0$ and $u\in(a,b)$, let $A^u_x\in\mathcal F$ be the event that
\begin{itemize}\itemsep0pt
\item[(a)]
for each $e\in\{0,1\}^d$, the set $\set_{L_0}\cap(x+eL_0 + [0,L_0)^d)$
contains a connected component with at least $\frac 34 \eta(u) L_0^d$ vertices,
\item[(b)]
all of these $2^d$ components are connected in 
$\set \cap(x+[0,2L_0)^d)$.
\end{itemize}
\end{definition}
For $u\in(a,b)$ and $x\in \GG_0$, let 
$\overline A^u_{x,0}$ be the compelement of $A^u_{x}$, 
and for $u\in(a,b)$, $k\geq 1$, and $x\in\GG_k$ define inductively  
\begin{equation}\label{def:Auxk}
\overline A^u_{x,k} = 
\bigcup_{\begin{array}{c}\scriptscriptstyle{x_1,x_2\in \GG_{k-1}\cap(x + [0,L_k)^d)} \\ \scriptscriptstyle{|x_1-x_2|_\infty \geq r_{k-1} \cdot L_{k-1}}\end{array}}
  \overline A^u_{x_1,k-1} \cap \overline A^u_{x_2,k-1}   \; .\
\end{equation}

\begin{definition}\label{def:Bux}
For $x\in \GG_0$ and $u\in(a,b)$, let $B^u_x\in\mathcal F$ be the event that
for all $e\in\{0,1\}^d$, 
\[
\left|\set_{L_0}\cap(x+eL_0 + [0,L_0)^d)\right| \leq \frac54\eta(u) L_0^d .\
\]
\end{definition}
For $u\in(a,b)$ and $x\in \GG_0$, let 
$\overline B^u_{x,0}$ be the complement of $B^u_{x}$, 
and for $u\in(a,b)$, $k\geq 1$, and $x\in\GG_k$ define inductively
\begin{equation}\label{def:Buxk}
\overline B^u_{x,k} = 
\bigcup_{\begin{array}{c}\scriptscriptstyle{x_1,x_2\in \GG_{k-1}\cap(x + [0,L_k)^d)} \\ \scriptscriptstyle{|x_1-x_2|_\infty \geq r_{k-1} \cdot L_{k-1}}\end{array}}
  \overline B^u_{x_1,k-1} \cap \overline B^u_{x_2,k-1}   \; .\
\end{equation}

\begin{definition}\label{def:good}
Let $u\in(a,b)$. For $k\geq 0$, we say that $x\in\GG_k$ is $k$-{\it bad} if the event 
$\overline A^u_{x,k}\cup \overline B^u_{x,k}$
occurs. Otherwise, we say that $x$ is $k$-{\it good}. 
Note that $x\in\GG_0$ is $0$-good, if the event $A^u_x\cap B^u_x$ occurs.
\end{definition}
The following result is \cite[Lemmas~4.2 and 4.4]{DRS12}.
\begin{lemma}\label{l:Guxk} 
Assume that the measures $\mathbb P^u$, $u\in(a,b)$, satisfy conditions \p{} -- \ppp{} and \s{} -- \sss{}.
Let $l_k$, $r_k$, and $L_k$ be defined as in \eqref{def:scales}. 
For each $u\in(a,b)$, there exist $C = C(u)<\infty$ and $C' = C'(u,l_0)<\infty$ such that for all 
$l_0,r_0\geq C$, $L_0\geq C'$, and $k\geq 0$, 
\[
\mathbb P^u\left[0\mbox{ is }k\mbox{-bad}\right] \leq 2\cdot 2^{-2^k} .\
\]
\end{lemma}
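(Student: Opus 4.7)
The plan is to prove, by induction on $k$, that the probabilities of the complementary events $\overline A^u_{0,k}$ (which are decreasing in $\atom$) and $\overline B^u_{0,k}$ (increasing in $\atom$) each decay doubly exponentially; the lemma then follows by a union bound. Axiom \ppp{} is the engine: it approximately factorizes the joint probability of two monotone events supported on well-separated boxes, at the price of shifting the parameter slightly in the appropriate monotone direction. Accordingly, fix $u\in(a,b)$ and pick two sequences $u^+_0<u^+_1<\ldots<u$ and $u^-_0>u^-_1>\ldots>u$ satisfying $u^+_k\geq(1+r_{k-1}^{-\constP})u^+_{k-1}$ and $u^-_{k-1}\geq(1+r_{k-1}^{-\constP})u^-_k$ for all $k\geq 1$. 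Both sequences can be fit inside $(a,b)$ with limits arbitrarily close to $u$ because $r_k=r_0\cdot 2^{k^\scexp}$ with $\scexp\geq 1/\epsP$ makes $\sum_k r_k^{-\constP}$ summable and arbitrarily small for $r_0$ large. I will prove the refined bounds
\begin{equation*}
\alpha_k:=\mathbb{P}^{u^+_k}\!\left[\overline A^u_{0,k}\right]\leq 2^{-2^k},\qquad \beta_k:=\mathbb{P}^{u^-_k}\!\left[\overline B^u_{0,k}\right]\leq 2^{-2^k},
\end{equation*}
from which the lemma follows by monotonicity (\pp{}), since $u^+_k<u<u^-_k$ and the events have the stated monotonicity in $\atom$.

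For the base case $k=0$, $A^u_0$ asks for a component of volume $\geq\tfrac34\eta(u)L_0^d$ in each of the $2^d$ sub-boxes of side $L_0$, together with local connectivity. Axiom \s{} provides a unique large-diameter component in each sub-box (with super-polynomially small failure probability), and the ergodic theorem (\p{}) applied to $L_0^{-d}|\set_\infty\cap(\text{sub-box})|\to\eta(u^+_0)$, together with continuity (\sss{}) making $\eta(u^+_0)$ as close to $\eta(u)$ as desired, upgrades the diameter bound to the required volume bound. The inter-component connectivity comes from the second inequality of \s{}. Thus $\alpha_0\leq\tfrac12$ for $L_0$ large; analogously, the ergodic theorem under $\mathbb{P}^{u^-_0}$ combined with $\eta(u^-_0)<\tfrac54\eta(u)$ yields $\beta_0\leq\tfrac12$.

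For the inductive step, \eqref{def:Auxk} presents $\overline A^u_{0,k}$ as a union of at most $l_{k-1}^{2d}$ intersections $\overline A^u_{x_1,k-1}\cap\overline A^u_{x_2,k-1}$ with $|x_1-x_2|_\infty\geq r_{k-1}L_{k-1}$. Each factor is decreasing and, by recursion, measurable with respect to $\sigma(\atom(y):y\in\ballZ(x_i,2L_{k-1}))\subseteq\sigma(\atom(y):y\in\ballZ(x_i,10L_{k-1}))$, as required. Applying the decreasing part of \ppp{} with $L=L_{k-1}$, $R=r_{k-1}\geq\RP$, $\widehat u=u^+_{k-1}$, and $u=u^+_k$ (the ratio condition holding by construction) gives
\begin{equation*}
\mathbb{P}^{u^+_k}\!\left[\overline A^u_{x_1,k-1}\cap\overline A^u_{x_2,k-1}\right]\leq\alpha_{k-1}^2+e^{-\funcP(L_{k-1})},
\end{equation*}
and the analogous bound for $\beta_k$ follows from the increasing part of \ppp{}. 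A union bound yields $\alpha_k\leq l_{k-1}^{2d}\bigl(\alpha_{k-1}^2+e^{-\funcP(L_{k-1})}\bigr)$. The combinatorial prefactor $l_{k-1}^{2d}=l_0^{2d}\cdot 4^{2d(k-1)^\scexp}$ grows only like $\exp(\mathrm{poly}(k))$, whereas squaring the induction hypothesis produces the doubly exponential gain $\alpha_{k-1}^2\leq 2^{-2^k}$, so the prefactor is absorbed for $l_0$ large. The leftover error $l_{k-1}^{2d}e^{-\funcP(L_{k-1})}$ is absorbed using $\funcP(L)\geq e^{(\log L)^{\epsP}}$ together with $L_{k-1}=L_0\prod_{j<k-1}l_j$ being large enough once $L_0$ is large in terms of $l_0$ and $r_0$.

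The main obstacle is the joint calibration of the scales $l_k,r_k,L_k$ and the parameter shifts $u^\pm_k$: the shifts must remain in $(a,b)$ while leaving the required $(1+r_{k-1}^{-\constP})$-gap at every scale, yet their limits must stay close enough to $u$ for continuity to make $\alpha_0$ and $\beta_0$ small. The prescription $\scexp=\lceil 1/\epsP\rceil$ is precisely what makes $\sum_k r_k^{-\constP}$ summable (with arbitrarily small tail for $r_0$ large) and simultaneously forces $\funcP(L_{k-1})$ to dominate $l_{k-1}^{2d}\cdot 2^{2^k}$ for $L_0$ large. This bookkeeping, already carried out in \cite{DRS12}, is where the constants $C(u)$ and $C'(u,l_0)$ of the statement are fixed.
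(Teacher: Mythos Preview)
The paper does not give its own proof; it simply cites \cite[Lemmas~4.2 and 4.4]{DRS12}. Your sketch reconstructs that argument faithfully: treat $\overline A^u_{0,k}$ and $\overline B^u_{0,k}$ separately, slide along parameter sequences $u^\pm_k$ chosen so that the gap condition in \ppp{} holds at every scale (possible because $\sum_k r_k^{-\constP}<\infty$ with the choice $\scexp=\lceil 1/\epsP\rceil$), and start the induction from a base case supplied by the ergodic theorem \p{} together with continuity \sss{} and local uniqueness \s{}.

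There is one bookkeeping slip. With the hypothesis $\alpha_{k-1}\leq 2^{-2^{k-1}}$ and base case $\alpha_0\leq\tfrac12$, the recursion $\alpha_k\leq l_{k-1}^{2d}\bigl(\alpha_{k-1}^2+e^{-\funcP(L_{k-1})}\bigr)$ gives at best $\alpha_k\leq 2\,l_{k-1}^{2d}\cdot 2^{-2^k}$, which is never $\leq 2^{-2^k}$. Your remark that ``the prefactor is absorbed for $l_0$ large'' is backward: enlarging $l_0$ enlarges the prefactor. The actual fix is to strengthen the base case, not the prefactor. Writing $a_k=-\log_2\alpha_k$, the recursion yields
\[
a_k\;\geq\;2^k\Bigl(a_0-\sum_{j\geq 1}2^{-j}\bigl(2d\log_2 l_{j-1}+1\bigr)\Bigr),
\]
and the sum on the right is finite (of order $2d\log_2 l_0+C$). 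So one needs $\alpha_0$ smaller than a constant times $l_0^{-2d}$, which is obtained by taking $L_0$ large enough depending on $l_0$ (the spatial ergodic theorem drives the seed probability to zero). This is exactly why the statement has $C'=C'(u,l_0)$, a dependence your last paragraph alludes to but your induction as written does not exploit. With this correction the argument closes and matches \cite{DRS12}.
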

\begin{remark}\label{rem:conditions}
The proof of Lemma~\ref{l:Guxk} crucially relies on conditions \pp{}, \ppp{}, and \sss{}, see \cite{DRS12}. 
This is the only place in the proof of Theorem~\ref{thm:isoperimetric} where we use these conditions. 
In the proof of Theorem~\ref{thm:ip} we use these conditions also to prove \eqref{eq:kbad:new}, 
which is a slightly stronger version of Lemma~\ref{l:Guxk} 
(and its proof is essentially the same as the proof of Lemma~\ref{l:Guxk}).
\end{remark}
The next result is \cite[Lemma~5.2]{DRS12}.
\begin{lemma}\label{l:fromG0toZd}
Let $x,y\in\GG_0$ be nearest neighbors in $\GG_0$ such that both are $0$-good. Then 
\begin{itemize}\itemsep0pt
 \item[(a)]
each of the graphs $\set_{L_0}\cap(z + [0,L_0)^d)$, with $z\in \{x,y\}$, 
contains the unique connected component $\mathcal C_z$ with at least $\frac34 \eta(u) L_0^d$ vertices, 
\item[(b)] 
$\mathcal C_x$ and $\mathcal C_y$ are connected in the graph $\set\cap ((x+[0,2L_0)^d)\cup(y+[0,2L_0)^d))$. 
\end{itemize}
\end{lemma}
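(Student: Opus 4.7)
The plan is to read off the lemma directly from the definition of $0$-good, using the two ingredients $A^u_z \cap B^u_z$ in tandem: $A^u_z$ provides existence of a large component and connectivity between neighboring subboxes, while $B^u_z$ provides the volume cap that forces uniqueness.

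For part (a), fix $z \in \{x,y\}$. Applied with $e = 0$, Definition~\ref{def:Axu}(a) for $A^u_z$ gives a connected component $\mathcal{C}_z$ of $\set_{L_0} \cap (z + [0,L_0)^d)$ with $|\mathcal{C}_z| \geq \tfrac{3}{4}\eta(u)L_0^d$. For uniqueness, suppose for contradiction there were a second, disjoint connected component $\mathcal{C}'_z$ of $\set_{L_0} \cap (z + [0,L_0)^d)$ of size $\geq \tfrac{3}{4}\eta(u)L_0^d$. Since $\mathcal{C}_z$ and $\mathcal{C}'_z$ are disjoint subsets of $\set_{L_0} \cap (z + [0,L_0)^d)$, this would force
\[
\bigl|\set_{L_0} \cap (z + [0,L_0)^d)\bigr| \;\geq\; |\mathcal{C}_z| + |\mathcal{C}'_z| \;\geq\; \tfrac{3}{2}\eta(u) L_0^d \;>\; \tfrac{5}{4}\eta(u) L_0^d,
\]
contradicting $B^u_z$ applied with $e = 0$ (Definition~\ref{def:Bux}). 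Hence $\mathcal{C}_z$ is unique.

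For part (b), since $x,y \in \GG_0$ are $\ell^1$-nearest neighbors, $y = x + \sigma L_0 e_i$ for some coordinate direction $i \in \{1,\ldots,d\}$ and sign $\sigma \in \{+1,-1\}$. Consider first the case $\sigma = +1$, i.e.\ $y = x + L_0 e_i$. Then $y + [0,L_0)^d = x + eL_0 + [0,L_0)^d$ where $e = e_i \in \{0,1\}^d$ is one of the $2^d$ corners appearing in the definition of $A^u_x$. By Definition~\ref{def:Axu}(a) applied to $x$ with this $e$, the set $\set_{L_0} \cap (y + [0,L_0)^d)$ contains a connected component of size $\geq \tfrac{3}{4}\eta(u)L_0^d$; by the uniqueness just proved in part (a), this component must coincide with $\mathcal{C}_y$. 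Likewise the component promised by $A^u_x$ in the subbox $x + [0,L_0)^d$ (the case $e = 0$) must be $\mathcal{C}_x$. Now Definition~\ref{def:Axu}(b) for $A^u_x$ asserts that all $2^d$ of these components are connected in $\set \cap (x + [0,2L_0)^d)$; in particular, $\mathcal{C}_x$ and $\mathcal{C}_y$ are connected in $\set \cap (x + [0,2L_0)^d)$, which is contained in the graph in the statement. The case $\sigma = -1$ is symmetric: apply the same argument with the roles of $x$ and $y$ swapped, using $A^u_y$ to connect $\mathcal{C}_x$ and $\mathcal{C}_y$ inside $\set \cap (y + [0,2L_0)^d)$.

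There is no real obstacle here: the lemma is a bookkeeping exercise in matching the subbox addressed by the nearest-neighbor direction in $\GG_0$ to one of the $2^d$ corner subboxes appearing in Definitions~\ref{def:Axu} and \ref{def:Bux}. The only subtle point to make sure of is that uniqueness in part (a) holds in the sense of components of $\set_{L_0}$ restricted to the single subbox $z + [0,L_0)^d$ (not of $\set$ itself), so that the volume bound from $B^u_z$ applies cleanly.
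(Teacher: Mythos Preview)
Your proof is correct and is exactly the natural argument from the definitions. The paper does not give its own proof of this lemma but cites it as \cite[Lemma~5.2]{DRS12}; your argument (existence from $A^u_z$, uniqueness from the volume cap $B^u_z$, and connectivity by matching the neighboring box to one of the $2^d$ corner subboxes in Definition~\ref{def:Axu}) is precisely the intended one.
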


\section{Proof of Theorem~\ref{thm:isoperimetric}}\label{sec:isoperimetric}

The proof of Theorem~\ref{thm:isoperimetric} consists of a probabilistic part, 
in which we impose some restrictions on the set of allowed configurations (see Defintion~\ref{def:mathcalh}) and 
estimate the probability of the resulting event $\mathcal H$ (see \eqref{eq:eventH:proba}), 
and a deterministic part, in which we prove the isoperimetric inequality for subsets of the largest connected component of $\set\cap\ballZ(0,R)$ 
for each configuration satisfying the a priori restrictions. 

We identify two special levels of the renormalization, $s$ defined in \eqref{def:s}, and $r = \lfloor \frac{s}{2}\rfloor$. 
They are defined so that on the one hand $L_s^d\ll R^\ei$ (which means that ``deleted'' subboxes are rather small), and on the other hand, 
the probability that a vertex is $r$-bad is still very small in $R$.
We use these scales to define the event $\mathcal H$, which consists of configurations for which all the vertices from $\ballZ(0,3R)\cap (L_r\cdot \Z^d)$ are $r$-good, and 
any $x,y\in \set_{L_s}\cap\ballZ(0,2R)$ such that $|x-y|_\infty\leq 2L_s$ are connected in $\ballZ(x,4L_s)$, see Defintion~\ref{def:mathcalh}.
Using Lemma~\ref{l:Guxk} and assumption \s{} we show that the probability of $\mathcal H$ is close to $1$, see \eqref{eq:eventH:proba}. 
(The event $\mathcal H$ depends on $d$, $u$, $\ei$, and $R$, but we do not reflect this in the notation.)

Next, using combinatorics we show that any configuration from $\mathcal H$ belongs to the event in \eqref{eq:isoperimetric}. 
This is done in several steps. 
First, using the notion of $k$-good vertices from Definition~\ref{def:good}, we identify for each configuration in $\mathcal H$ a well structured connected (in $\GG_0 = L_0\cdot \Z^d$) 
subset $\mathbf G$ of $0$-good vertices in $\GG_0\cap\ballZ(0,2R)$, obtained from $\GG_0\cap\ballZ(0,2R)$ 
by a certain multi-scale perforation procedure. 
The set $\mathbf G$ consists roughly of those $0$-good vertices in $\GG_0\cap\ballZ(0,2R)$ which are contained only in $j$-good boxes for all $1\leq j\leq r$. 
(We need to sparsen this set a bit more in order to obtain the actual set $\mathbf G$ with the desired connectivity properties.)
This set is well connected, ubiquitous in $\GG_0\cap\ballZ(0,2R)$, 
and has almost the same volume as $\GG_0\cap\ballZ(0,2R)$, see Proposition~\ref{prop:Gproperties}. 
A crucial step in the proof is a reduction of the initial isoperimetric problem 
for subsets of the largest cluster of $\set$ in $\ballZ(0,R)$ to an isoperimetric problem for large enough subsets of $\mathbf G$, 
see Lemmas~\ref{l:isopmain} and \ref{l:isopbaldG}.
The rest of the proof is then about isoperimetric properties of large subsets $\mathbf A$ of $\mathbf G$, see Lemma~\ref{l:isopbaldG}. 
If $\mathbf A$ is sparse, then its boundary is almost comparable with the volume of $\mathbf A$. 
The most delicate case is when $\mathbf A$ is localized, since in this case its boundary may be much smaller than its volume. 
In this case, we estimate the boundary of $\mathbf A$ locally in each of the boxes of side length $3L_s$ which are densely occupied by $\mathbf A$ and by its complement, see Lemma~\ref{l:isopa}. 
We show that in each of such boxes, the boundary of $\mathbf A$ is at least $c\cdot L_s^{d-1}$. 
For that we prove a stronger statement that the restriction of $\mathbf A$ to (many) $j$-dimensional hyperplanes ($2\leq j\leq d$) intersecting the given box of side length $3L_s$ 
has boundary $\geq c_j\cdot L_s^{j-1}$, see Lemma~\ref{l:isopaj}. This proof is by induction on $j$.

\medskip

In the proof of Theorem~\ref{thm:isoperimetric} we will work with the scales $L_k$, $l_k$, and $r_k$ defined in \eqref{def:scales}. 
Throughout the proof we take $r_0$, $l_0$, and $L_0$ satisfying Lemma~\ref{l:Guxk}. 
We need to adjust these parameters further in the proof as follows:
\begin{itemize}\itemsep0pt
\item
in the construction of $\mathbf G$, we assume that $4r_0<l_0$, 
which is essential for the connectedness of $\mathbf G$. 
\item
in showing that the largest (in volume) connected component of $\set\cap\ballZ(0,2R)$ is uniquely defined, we assume that 
$L_0$ is large enough and $\frac{r_0}{l_0}$ is small enough (both depending on $u$) to satisfy \eqref{eq:L_0:condition}. 
\item
we choose $\frac{r_0}{l_0}$ small enough to satisfy Lemmas~\ref{l:isopmain} and \ref{l:isopbaldG}.
\end{itemize}
Most of the conditions on the smallness of $\frac{r_0}{l_0}$ are formulated in terms of the closeness to $1$ of 
\begin{equation}\label{eq:fj}
f_j\left(\frac{r_0}{l_0}\right) = \prod_{i=0}^\infty\left(1 - 3\cdot \left(\frac{r_i}{l_i}\right)^j\right)  .\
\end{equation}
(See, \eqref{eq:r_0l_0:isopmain}, \eqref{eq:fj:isopaj}, and \eqref{eq:r_0l_0isopaj3}.)
The only exception is \eqref{eq:r_0l_0:isopaj2}. 

The reader may notice that in Lemma~\ref{l:isopaj} we choose the ratio $\frac{r_0}{l_0}$ small enough depending on a parameter $\epsilon$, 
see \eqref{eq:fj:isopaj} and \eqref{eq:r_0l_0isopaj3}. 
This is fine, since in the end we only use Lemma~\ref{l:isopaj} for a specific choice of $\epsilon = \frac{1}{2\cdot 3^d}$.

\subsection{The event $\mathcal H$ and its probability}\label{sec:eventH}

In this section we define the event $\mathcal H$ containing all the restrictions on the set of allowed configurations (see Definition~\ref{def:mathcalh}), 
and show that it has probability close to $1$ (see \eqref{eq:eventH:proba}). 
The event $\mathcal H$ depends on $d$, $u$, $\ei$, and $R$, but we do not reflect this in the notation.

\medskip

Recall the definition of $\ei$ from the statement of Theorem~\ref{thm:isoperimetric}, and 
note that it suffices to assume that $\ei\in(0,1)$. 

Let $s$ be the largest integer such that $L_s^{3d^2} \leq R^{\ei}$, i.e.,
\begin{equation}\label{def:s}
s=\max \left\{ \, s' \; : \;   L_{s'}^{3d^2} \leq R^{\ei} \right\} .\
\end{equation}
We assume that $R\geq L_0^{3d^2/\ei}$, so that $s$ is well-defined. 

Let $r = \lfloor \frac{s}{2}\rfloor$. By \eqref{def:scales}, 
\begin{equation}\label{eq:scalesrs}
L_r^2\leq L_0\cdot L_s .\
\end{equation}

\begin{remark}
\begin{itemize}\itemsep0pt
\item[(1)]
We need to choose the power of $L_{s'}$ in \eqref{def:s} large enough, 
only in order to deduce Theorem~\ref{thm:isoperimetric} from Lemmas~\ref{l:isopmain} and \ref{l:isopbaldG}. 
In fact, any exponent bigger than $3d^2$ would also do, with a suitable change of constants in \eqref{eq:Ls:lowerbound} and \eqref{eq:bounds}. 
\item[(2)]
Property \eqref{eq:scalesrs} will be crucial in the proof of the isoperimetric inequality for two dimensional slices, see 
Lemma~\ref{l:isopaj} and the proof of \eqref{eq:isopaconnected}. 
\end{itemize}
\end{remark}
By \eqref{def:scales} and \eqref{def:s}, for all $R\geq L_0^{3d^2/\ei}$, 
\[
R^{\frac{\ei}{3d^2}} \leq L_{s+1} = l_s\cdot L_s \leq l_0\cdot 4 \cdot (L_s)^{1+2^\scexp},
\]
which implies that 
\begin{equation}\label{eq:Ls:lowerbound}
L_s \geq \frac{1}{4l_0} R^{\frac{\ei}{3d^2(1+2^\scexp)}} .\
\end{equation}
From \eqref{def:scales} and \eqref{eq:Ls:lowerbound} we deduce that there exists $c = c(\ei, \scexp, l_0, L_0)>0$ such that for all $R\geq L_0^{3d^2/\ei}$, 
\begin{equation}\label{eq:bounds}
s\geq c\cdot(\log R)^{\frac{1}{1+\scexp}} - 1 .\
\end{equation}

\bigskip

Next we define the event $\mathcal H$.
\begin{definition}\label{def:mathcalh}
Let $u\in(a,b)$. Consider the event $\mathcal H$ that 
\begin{itemize}\itemsep0pt
\item[(a)]
each $z\in\GG_r\cap\ballZ(0,3R)$ is $r$-good, 
\item[(b)]
for any $z\in\ballZ(0,2R)$ and $x,y\in\set_{L_s}\cap(z+[-2L_s,2L_s)^d)$, 
$x$ is connected to $y$ in $\set\cap(z+[-4L_s,4L_s)^d)$. 
\end{itemize}
\end{definition}
\begin{remark}\label{rem:sgood}
Property (a) in the definition of $\mathcal H$ implies the weaker version (a') stating that each $z\in\GG_s\cap\ballZ(0,2R)$ is $s$-good. 
Most of the arguments in the proof of Theorem~\ref{thm:isoperimetric} would go through if we used (a') instead of (a) in the definition of $\mathcal H$. 
The only point where we essentially use (a) is in the proof of the two dimesional case, 
see Lemma~\ref{l:isopaj} and the proof of \eqref{eq:isopaconnected}. 
\end{remark}
By Definition~\ref{def:good}, \s{}, and Lemma \ref{l:Guxk}, there exists $C = C(u)<\infty$ such that 
\[
\mathbb P^u\left[\mathcal H^c \right]\leq 
|\GG_r\cap\ballZ(0,3R)|\cdot 2\cdot 2^{-2^r} + |\ballZ(0,2R)|\cdot C\cdot e^{-\funcS(u,2L_s)} ~.\
\]
Using \eqref{eq:funcS}, \eqref{eq:Ls:lowerbound}, and \eqref{eq:bounds},  
we deduce that there exist $c' = c'(u,\ei,\scexp)>0$ and $C' = C'(u,\ei, \scexp, l_0, L_0)<\infty$ such that for all $R\geq C'$, 
\begin{equation}\label{eq:s:bounds}
2^r \geq (\log R)^{1+\constS} \quad\mbox{and}\quad \funcS(u,2L_s) \geq c'(\log R)^{1+\constS} ~.\
\end{equation}
Note that also $|\GG_r\cap\ballZ(0,3R)| \leq |\ballZ(0,3R)|\leq (6R+1)^d$.
Therefore, there exist $c=c(u,\ei,\scexp)>0$ and $C = C(u,\ei,\scexp,l_0,L_0)<\infty$ such that for all $R\geq C$,  
\begin{equation}\label{eq:eventH:proba}
\mathbb P^u\left[\mathcal H \right]\geq 1 - Ce^{-c(\log R)^{1+\constS}} .\
\end{equation}
In the remaining part of the proof, we will show that 
each configuration from $\mathcal H$ also belongs to the event in \eqref{eq:isoperimetric}. 
Together with \eqref{eq:eventH:proba} this will imply Theorem~\ref{thm:isoperimetric}.
\[
\text{From now on we assume that $\mathcal H$ occurs.}
\]

\subsection{Construction of $\mathbf G$}

In this section we construct the subset $\mathbf G$ of $0$-good vertices in $\GG_0\cap\ballZ(0,2R)$ 
with the property that for every $z_0\in\mathbf G$ and each of the boxes $(z_j+[0,L_j)^d)$, $z_j\in\GG_j$, $0<j\leq s$, containing $z_0$, the vertex $z_j$ is $j$-good, 
and also that the set $\mathbf G$ exhibits good properties of density and connectedness, see Proposition~\ref{prop:Gproperties}.
The construction is done recursively by going down through the renormalization levels and using Definition~\ref{def:good}. 
We assume throughout the construction that $4r_0<l_0$ (which implies that $4r_i<l_i$ for all $i$). This is essential 
for the connectedness of the sets below.

\medskip

Let $\mathcal G_s=\GG_s\cap\ballZ(0,2R-2L_s)$. 
By the definition of $\mathcal H$ and Remark~\ref{rem:sgood}, all $z_s\in\mathcal G_s$ are $s$-good. 
Also note that $\cup_{z_{s}\in\mathcal G_{s}}(z_{s} + [0,L_{s})^d)\subset \ballZ(0,2R-L_s)$.

For $r\leq i<s$, let $\mathcal G_i = \cup_{z_{i+1}\in\mathcal G_{i+1}}(\GG_i\cap(z_{i+1} + [0,L_{i+1})^d)$. 
By the definition of $\mathcal H$, all $z_r\in\mathcal G_r$ are $r$-good.

Next we take $0<i\leq r$ and assume that $\mathcal G_i\subset \GG_i$ is defined so that
\begin{itemize}\itemsep0pt
\item
all $z_{i}\in\mathcal G_{i}$ are $i$-good,
\item
for any $z_s\in\mathcal G_s$, the set $\mathcal G_i\cap(z_s + [0,L_s)^d)$ is connected in $\GG_i$ and 
\[
\left|\mathcal G_{i}\cap(z_s + [0,L_s)^d)\right| \geq \left|\mathcal G_{i+1}\cap(z_s + [0,L_s)^d)\right|\cdot l_{i}^d\cdot \left(1 - 3\left(\frac{r_{i}}{l_{i}}\right)^d\right),
\]
\item 
for any $z_s,\widetilde z_s\in\mathcal G_s$ with $|z_s - \widetilde z_s|_1 = L_s$, 
the set $\mathcal G_i\cap((z_s + [0,L_s)^d)\cup(\widetilde z_s + [0,L_s)^d))$ is connected in $\GG_i$,
\item
for any $z_s\in\mathcal G_s$, $x_i\in\GG_i\cap(z_s + [0,L_s)^d)$, and two orthogonal $e,e'\in\Z^d$ with $|e|_1 = |e'|_1 = 1$,
the two dimensional slice $\mathcal G_{i}\cap(x_i + \Z\cdot e + \Z\cdot e')\cap(z_s + [0,L_s)^d)$ 
is connected in $\GG_i\cap(x_i + \Z\cdot e + \Z\cdot e')$, and 
\begin{multline*}
\left|\mathcal G_i\cap(x_i + \Z\cdot e + \Z\cdot e')\cap(z_s + [0,L_s)^d)\right|\\ \geq 
\left|\mathcal G_{i+1}\cap(x_{i+1} + \Z\cdot e + \Z\cdot e')\cap(z_s + [0,L_s)^d)\right|
\cdot l_{i}^2\cdot\left(1 - 3\left(\frac{r_{i}}{l_{i}}\right)^2\right),
\end{multline*}
where $x_{i+1}\in\GG_{i+1}$ satisfies $x_i\in(x_{i+1} + [0,L_{i+1})^d)$,
\item
for any $z_s,\widetilde z_s\in\mathcal G_s$ with $|z_s - \widetilde z_s|_1 = L_s$, 
$x_i\in\GG_i\cap(z_s + [0,L_s)^d)$, $e = \frac{z_s - \widetilde z_s}{L_s}$, and $e'\in\Z^d$ orthogonal to $e$ such that $|e'|_1 = 1$, 
the two dimensional slice $\mathcal G_{i}\cap(x_i + \Z\cdot e + \Z\cdot e')\cap((z_s + [0,L_s)^d)\cup(\widetilde z_s + [0,L_s)^d))$ 
is connected in $\GG_i\cap(x_i + \Z\cdot e + \Z\cdot e')$.
\end{itemize}
We now define $\mathcal G_{i-1}\subset\GG_{i-1}$ which satisfies the same properties as $\mathcal G_i$ with $i$ replaced everywhere by $(i-1)$. 
By Definition~\ref{def:good}, for each $z_i\in\mathcal G_i$, there exist $a_{z_i},b_{z_i}\in\GG_{i-1}\cap(z_i + [0,L_i)^d)$ such that 
all the vertices in $\mathcal G_{z_i}' = (\GG_{i-1}\cap(z_i + [0,L_i)^d))\setminus((a_{z_i}+[0,r_{i-1}L_{i-1})^d)\cup(b_{z_i}+[0,r_{i-1}L_{i-1})^d))$ are $(i-1)$-good.
Note that the set $\mathcal G_{z_i}'$ is connected in $\GG_{i-1}$ if $d\geq 3$ (and $4r_{i-1}<l_{i-1}$), 
but not necessarily connected if $d=2$. 
\begin{figure}
\centering
\includegraphics[width=0.9\textwidth]{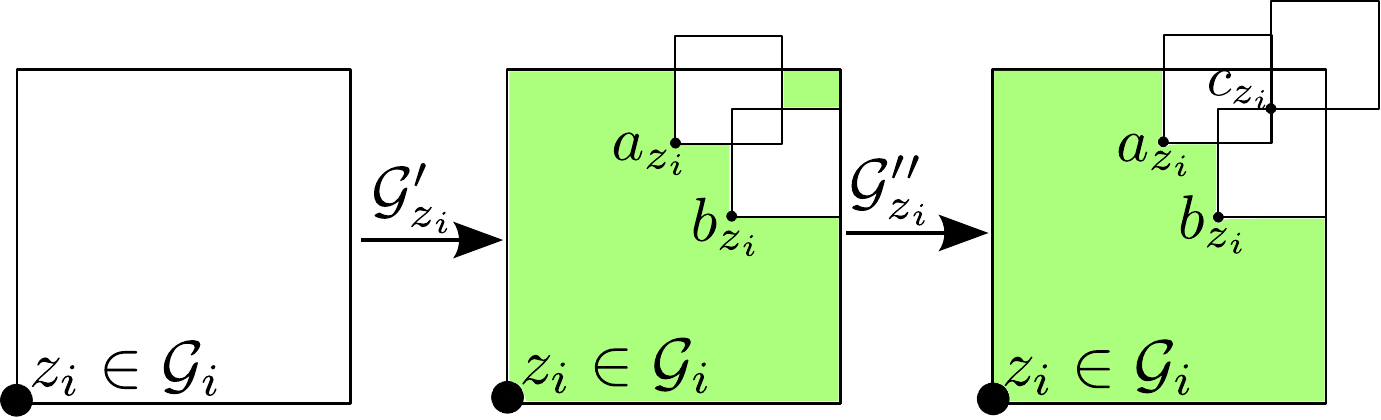}
\caption{The colored region on the middle picture corresponds to $\mathcal G_{z_i}'$. 
Its restriction to a two dimensional hyperplane is not generally connected, as illustrated here. 
The colored region on the last picture corresponds to $\mathcal G_{z_i}''$. 
The restriction of $\mathcal G_{z_i}''$ to any two dimensional hyperplane is connected.  
The three small boxes on these figures are not drawn to the actual scale. (Mind that we assume that $4r_{i-1}<l_{i-1}$.)}
\label{fig:mathcalG}
\end{figure}
However, for each $z_i\in\mathcal G_i$, there exists $c_{z_i}\in\GG_{i-1}\cap(z_i + [0,L_i)^d)$ (see Figure~\ref{fig:mathcalG}) such that 
the sets $\mathcal G_{z_i}'' = \mathcal G_{z_i}'\setminus(c_{z_i}+[0,r_{i-1}L_{i-1})^d)$, $z_i\in\mathcal G_i$,  
satisfy the following properties:
\begin{itemize}
\item
for any $z_i\in\mathcal G_i$, all $z_{i-1}\in\mathcal G_{z_i}''$ are $(i-1)$-good,
\item
for any $z_i\in\mathcal G_i$, the set $\mathcal G_{z_i}''$ is connected in $\GG_{i-1}$ and 
$|\mathcal G_{z_i}''| \geq l_{i-1}^d\cdot\left(1 - 3\left(\frac{r_{i-1}}{l_{i-1}}\right)^d\right)$,
\item
for any $z_i,\widetilde z_i\in\mathcal G_i$ with $|z_i - \widetilde z_i|_1 = L_i$, 
the set $\mathcal G_{z_i}''\cup\mathcal G_{\widetilde z_i}''$ is connected in $\GG_{i-1}$,
\item
for any $z_i\in\mathcal G_i$, $x_{i-1}\in\GG_{i-1}\cap(z_i + [0,L_i)^d)$, and two orthogonal $e,e'\in\Z^d$ with $|e|_1 = |e'|_1 = 1$,
the two dimensional slice $\mathcal G_{z_i}''\cap(x_{i-1} + \Z\cdot e + \Z\cdot e')$ 
is connected in $\GG_{i-1}\cap(x_{i-1} + \Z\cdot e + \Z\cdot e')$, and 
$\left|\mathcal G_{z_i}''\cap(x_{i-1} + \Z\cdot e + \Z\cdot e')\right| \geq l_{i-1}^2\cdot\left(1 - 3\left(\frac{r_{i-1}}{l_{i-1}}\right)^2\right)$,
\item
for any $z_i,\widetilde z_i\in\mathcal G_i$ with $|z_i - \widetilde z_i|_1 = L_i$, 
$x_{i-1}\in\GG_{i-1}\cap(z_i + [0,L_i)^d)$, $e = \frac{z_i - \widetilde z_i}{L_i}$, and $e'\in\Z^d$ orthogonal to $e$ such that $|e'|_1 = 1$, 
the two dimensional slice $(\mathcal G_{z_i}''\cup\mathcal G_{\widetilde z_i}'')\cap(x_{i-1} + \Z\cdot e + \Z\cdot e')$ 
is connected in $\GG_{i-1}\cap(x_{i-1} + \Z\cdot e + \Z\cdot e')$.
\end{itemize}
We define $\mathcal G_{i-1} = \cup_{z_i\in\mathcal G_i}\mathcal G_{z_i}''\subset\GG_{i-1}$. 
From the above properties of $(\mathcal G_{z_i}'')_{z_i\in\mathcal G_i}$, one can see that
$\mathcal G_{i-1}$ satisfies the same properties as $\mathcal G_i$ with $i$ replaced everywhere by $(i-1)$.
\begin{figure}[t]
\centering
\includegraphics[width=0.5\textwidth]{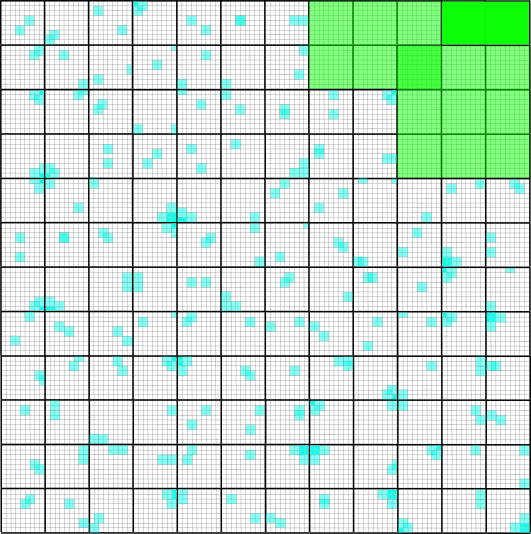}
\caption{This is an illustration of $\mathcal G_1$ and $\mathcal G_0 = \mathbf G$ in two dimensions in a box $(z_2 + [0,L_2)^2)$, for some $z_2\in\mathcal G_2$. 
Here $l_1 = 12$, $r_1 = 3$, $l_0 = 9$, and $r_0 = 2$. 
The box $(z_2 + [0,L_2)^2)$ consists of $12\times 12$ boxes of size $L_1$. 
The left-bottom corners of those boxes (of size $L_1$) which are not colored belong to $\mathcal G_1\cap(z_2 + [0,L_2)^2)$, 
and the left-bottom corners of small white boxes (of size $L_0$) belong to $\mathcal G_0\cap(z_2 + [0,L_2)^2)$.
Since $4r_i<l_i$ for $i\in\{0,1\}$, the resulting set is connected in $\GG_0$.} 
\label{fig:Gconstruction}
\end{figure}

\medskip

The outcome of such a recursive procedure is the set $\mathcal G_0\subset\cup_{z_s\in\mathcal G_s}(z_s + [0,L_s)^d)\subset \GG_0\cap\ballZ(0,2R-L_s)$ 
on the $0$-level of the renormalization. 
We denote it by $\mathbf G$. (See an illustration of part of $\mathbf G$ on Figure~\ref{fig:Gconstruction}.)
Note that $\mathbf G$ satisfies all the properties of $\mathcal G_i$ listed above with $i$ replaced everywhere by $0$. 
For the ease of references, we summarize most of the properties of $\mathbf G$ used in the proof of Theorem~\ref{thm:isoperimetric} in Proposition~\ref{prop:Gproperties}
(see also Remark~\ref{rem:Gexception}). These properties follow from the construction.

\begin{proposition}\label{prop:Gproperties}
The set $\mathbf G\subset \GG_0\cap\ballZ(0,2R-L_s)$ constructed above satisfies the following properties (with $f_j$ defined in \eqref{eq:fj}):
\begin{itemize}\itemsep0pt
\item[(a)]
any $z\in\mathbf G$ is $0$-good,
\item[(b)]
for any $x_s\in\GG_s\cap\ballZ(0,2R-2L_s)$, the set $\mathbf G\cap(x_s + [0,L_s)^d)$ is connected in $\GG_0$ and 
$\left|\mathbf G\cap(x_s + [0,L_s)^d)\right|\geq \left(\frac{L_s}{L_0}\right)^d\cdot f_d\left(\frac{r_0}{l_0}\right)$,
\item[(c)]
for any $x_s\in\GG_s\cap\ballZ(0,2R-3L_s)$, $x_0\in\GG_0\cap(x_s+[-L_s,2L_s)^d)$, $2\leq j\leq d$, 
and pairwise orthogonal $e_1,\dots,e_j\in\Z^d$ with $|e_i|_1 = 1$, 
the $j$-dimensional slice $\mathbf G\cap(x_s + [-L_s,2L_s)^d)\cap(x_0 + \sum_{i=1}^j\Z\cdot e_i)$ is connected in $\GG_0$ and 
$\left|\mathbf G\cap(x_s + [-L_s,2L_s)^d)\cap(x_0 + \sum_{i=1}^j\Z\cdot e_i)\right|\geq \left(\frac{3L_s}{L_0}\right)^j\cdot f_j\left(\frac{r_0}{l_0}\right)$.
\end{itemize}
\end{proposition}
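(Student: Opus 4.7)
The plan is to verify (a), (b), (c) by reading off the invariants maintained along the descending chain $\mathcal G_s\supset\cdots\supset\mathcal G_0=\mathbf G$ produced by the recursive construction, plus two small extensions: widening the within-$L_s$-box statements to the $3L_s$-box used in (c), and strengthening the within-box invariants from $j=2$ (the case explicitly tracked in the construction) to every $2\leq j\leq d$.

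Part (a) is immediate: $\mathbf G=\mathcal G_0=\bigcup_{z_1\in\mathcal G_1}\mathcal G''_{z_1}$, and by the defining step of the construction every vertex of any $\mathcal G''_{z_1}$ is $0$-good. For part (b), fix $x_s\in\GG_s\cap\ballZ(0,2R-2L_s)$. The connectedness of $\mathbf G\cap(x_s+[0,L_s)^d)$ in $\GG_0$ is the $i=0$ instance of the connectedness invariant; the volume lower bound follows by iterating
\[
\bigl|\mathcal G_i\cap(x_s+[0,L_s)^d)\bigr|\ \geq\ \bigl|\mathcal G_{i+1}\cap(x_s+[0,L_s)^d)\bigr|\cdot l_i^d\cdot\bigl(1-3(r_i/l_i)^d\bigr)
\]
from $i=s-1$ down to $0$, starting from $|\mathcal G_s\cap(x_s+[0,L_s)^d)|=1$ and using $\prod_{i=0}^{s-1}l_i=L_s/L_0$ together with $\prod_{i=0}^{s-1}(1-3(r_i/l_i)^d)\geq f_d(r_0/l_0)$.

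For part (c), I would handle the two extensions in turn. To get the $3\leq j\leq d$ analogue of the within-$L_s$-box invariants, note that at each level the sparsening removes up to three $j$-dimensional sub-boxes of side $r_{i-1}L_{i-1}$ from an $l_{i-1}\times\cdots\times l_{i-1}$ $j$-dim grid of $L_{i-1}$-boxes; when $j\geq 3$ and $4r_{i-1}<l_{i-1}$ such a removal cannot disconnect the grid (codimension $j-1\geq 2$ makes a few small obstructions harmless), and the $j$-dim volume loss per level is bounded by $3(r_{i-1}/l_{i-1})^j$, so iterating yields the factor $f_j(r_0/l_0)$. To widen to the $3L_s$-box $(x_s+[-L_s,2L_s)^d)$ (contained in $\ballZ(0,2R-L_s)$ since $x_s\in\ballZ(0,2R-3L_s)$), observe that it decomposes into exactly $3^d$ $L_s$-boxes from $\GG_s\cap\ballZ(0,2R-2L_s)$; any $j$-dim affine slice in directions $e_1,\ldots,e_j$ meets exactly $3^j$ of them (those with the $(e_{j+1},\ldots,e_d)$-coordinates matching $x_0$). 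Each contributes a connected piece of size $\geq(L_s/L_0)^j\cdot f_j(r_0/l_0)$, and any two such pieces in $L_s$-boxes adjacent along $e_k\in\{e_1,\ldots,e_j\}$ are glued in the union via the adjacent-$L_s$-box 2D-slice connectedness invariant applied to a 2D sub-slice containing $e_k$. Summing the $3^j$ per-box volume bounds gives the claimed $(3L_s/L_0)^j\cdot f_j(r_0/l_0)$.

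The only conceptually nontrivial step is the $j=2$ within-$L_s$-box invariant, which must survive the sparsening at every recursion level; this is precisely what the careful choice of the additional removed box $c_{z_i}$ in $\mathcal G''_{z_i}$ (see Figure~\ref{fig:mathcalG}) accomplishes in the construction. Once that is accepted, the verification of Proposition~\ref{prop:Gproperties} is essentially bookkeeping along the lines above.
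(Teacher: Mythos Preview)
Your proposal is correct and follows the paper's own approach, which is simply the one-line remark ``these properties follow from the construction.'' You have filled in the bookkeeping that the paper leaves implicit: (a) and (b) are read off directly from the $i=0$ instance of the recursive invariants, and (c) requires the two extensions you identify---lifting the within-$L_s$-box statements from $j=2$ to general $j$, and gluing $3^j$ such $L_s$-boxes into the $3L_s$-box via the adjacency invariant.

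One small comment: your ``codimension $j-1\geq 2$'' justification for why removing three small boxes cannot disconnect a $j$-dimensional grid when $j\geq 3$ is somewhat imprecise (the removed boxes are full-dimensional, not of positive codimension). The claim is nonetheless true, and in fact you do not need a separate geometric argument at all: since the construction already guarantees that every two-dimensional slice of $\mathcal G''_{z_i}$ is connected (this is exactly what the choice of $c_{z_i}$ achieves), connectedness of any $j$-dimensional slice for $j\geq 3$ follows by moving one coordinate at a time through connected $2$-dimensional sub-slices. This also makes the gluing of adjacent $L_i$-boxes in the $j$-dimensional slice immediate from the last of the listed properties of $\mathcal G''_{z_i}$, which you implicitly use in the $3L_s$-box step but should also invoke inside a single $L_s$-box when descending levels.
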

\begin{remark}\label{rem:Gexception}
Most part of the proof of Theorem~\ref{thm:isoperimetric} relies on the properties of $\mathbf G$ listed in Proposition~\ref{prop:Gproperties}, 
and not on the specifics of the construction of $\mathbf G$. 
The only exception is the proof of the isoperimetric inequality for two dimensional slices, 
where we need to use the definition of {\it all} $\mathcal G_i$'s,
see Lemma~\ref{l:isopaj} and especially the proof of \eqref{eq:relationboundaries}. 
\end{remark}

\medskip

In what follows, we will use ordinary font to denote subsets of $\set$ (e.g., $A$ and $\setd$), 
bold font for subsets of $\mathbf G$ (e.g., $\mathbf A$, $\setg$, $\mathbf a$, $\mathbf g$, etc.), 
and blackboard bold for subsets of $\GG_s$ (e.g., $\setgs$).

\subsection{Reduction of Theorem~\ref{thm:isoperimetric} to isoperimetry in $\mathbf G$}

In this section we show how the initial isoperimetric problem for large subsets of $\mathcal C_R$ can be reduced to 
an isoperimetric problem for large subsets of $\mathbf G$. 
We first show that the set $\mathbf G$ can be viewed as a coarsening of the largest connected subset $\mathcal C_{2R}$ of $\set\cap\ballZ(0,2R)$, 
in particular, that $\mathcal C_{2R}$ and $\mathcal C_R$ are uniquely defined for any configuration in $\mathcal H$ under a mild tuning of the renormalization scales, 
see \eqref{eq:L_0:condition}. 
The key reduction step is formalized in Lemmas~\ref{l:isopmain} and \ref{l:isopbaldG}. 
We finish this section with the proof of Theorem~\ref{thm:isoperimetric} given the results of the lemmas, and prove the lemmas in later sections.

\medskip

We will first show that the largest (in volume) connected component of 
$\set\cap\ballZ(0,2R)$ is uniquely defined, 
and that the set $\mathbf G$ can be viewed as its coarsening on the scale $L_0$.

Recall that $\mathbf G\subset\GG_0\cap\cup_{z_s\in\mathcal G_s}(z_s + [0,L_s)^d)\subset\GG_0\cap\ballZ(0,2R-L_s)$. 
By the definition of a $0$-good vertex and Lemma~\ref{l:fromG0toZd}, 
each of the boxes $x + [0,L_0)^d$, $x\in\mathbf G$, contains a unique connected subset $\mathcal C_x$ of $\set$ of size $\geq \frac 34 \eta(u)L_0^d$, 
and all these sets are connected in $(\mathbf G + [0,2L_0)^d)\subset\ballZ(0,2R)$. 
Therefore, all the $\mathcal C_x$, $x\in\mathbf G$, are part of the same connected component of $\set\cap\ballZ(0,2R)$, which has size 
\[
\geq \left|\bigcup_{x\in\mathbf G}\mathcal C_x\right| \geq \frac 34\eta(u)L_0^d\cdot|\mathbf G| .\
\]
On the other hand, by the definition of a $0$-good vertex and Lemma~\ref{l:fromG0toZd}, 
each of the boxes $x + [0,L_0)^d$, $x\in\mathbf G$, contains $\leq \frac 54 \eta(u)L_0^d$ vertices from $\set_{L_0}$. 
Since in addition by Proposition~\ref{prop:Gproperties}(b), 
\[
|\mathbf G| \geq \left|\GG_s\cap\ballZ(0,2R-2L_s)\right|\cdot f_d\left(\frac{r_0}{l_0}\right)\cdot \left(\frac{L_s}{L_0}\right)^d
\geq
\frac{|\ballZ(0,2R-2L_s)|}{L_s^d}\cdot f_d\left(\frac{r_0}{l_0}\right)\cdot \left(\frac{L_s}{L_0}\right)^d ,\
\]
it follows that
\begin{eqnarray*}
|\set_{L_0}\cap\ballZ(0,2R)| 
&\leq &\frac 54\eta(u)L_0^d \cdot |\mathbf G| + \left(|\ballZ(0,2R)| - L_0^d\cdot|\mathbf G|\right)\\
&\leq 
&L_0^d\cdot |\mathbf G|\cdot \left(
\frac 54\eta(u) + \frac{|\ballZ(0,2R)|}{|\ballZ(0,2R-2L_s)|}\cdot f_d\left(\frac{r_0}{l_0}\right)^{-1} - 1
\right) \\
&\leq 
&L_0^d\cdot |\mathbf G|\cdot \left(
\frac 54\eta(u) + \left(1 - \frac{L_s}{R}\right)^{-d}\cdot f_d\left(\frac{r_0}{l_0}\right)^{-1} - 1
\right) .\
\end{eqnarray*}
Since we assume that $\ei<1$, it follows from \eqref{def:s} that $\frac{L_s}{R} < \frac{1}{L_s}\leq \frac{1}{L_0}$. 
Therefore, there exist $C = C(u)<\infty$ and $\rho = \rho(u)>0$ such that for all 
$L_0>C(u)$ and for all choices of the ratio $\frac{r_0}{l_0}<\rho(u)$, 
\begin{equation}\label{eq:L_0:condition}
\left(1 - \frac{1}{L_0}\right)^{-d}\cdot f_d\left(\frac{r_0}{l_0}\right)^{-1} - 1 < \frac 14\eta(u) .\
\end{equation}
With such a choice of $L_0$, $r_0$, and $l_0$, 
the largest (in volume) connected component of $\set\cap\ballZ(0,2R)$ is uniquely defined and 
\[
\text{$\mathcal C_{2R}$ contains $\mathcal C_x$, for all $x\in\mathbf G$.}
\]
Similar reasoning together with the above conclusion imply that $\mathcal C_{2R}$ contains $\mathcal C_R$.

\bigskip

For any subset $A$ of $\set$, 
we denote by $\partial_\set A$ the edge boundary of $A$ in $\set$, i.e., the set of edges from $\Z^d$ with one end-vertex in $A$ and the other in $\set\setminus A$. 
Similarly, for any subset $\mathbf A$ of $\mathbf G$, 
we denote by $\partial_{\mathbf G} \mathbf A$, the boundary of $\mathbf A$ in $\mathbf G$, i.e., those pairs of vertices in $\GG_0$ which are at $\ell^1$-distance $L_0$ (in $\Z^d$) from each other, 
one of them is in $\mathbf A$ and the other in $\mathbf G\setminus \mathbf A$.

The next two lemmas allow to reduce the initial isoperimetric problem for subsets of $\mathcal C_R$ to an isoperimetric problem for subsets of $\mathbf G$. 
Recall the definition of $\mathcal C_x$ from Lemma~\ref{l:fromG0toZd}(a).

\begin{lemma}\label{l:isopmain}
Let $A$ be a subset of $\mathcal C_R$. 
Let $\setg$ be the set of all $x\in\mathbf G$ such that $\mathcal C_x\cap A \neq\emptyset$, and denote by 
$\setd$ the set of $x\in A$ such that there exists $y\in \mathcal C_{2R}\setminus A$ with $|x-y|_\infty\leq 2L_s$. 
Then
\begin{equation}\label{eq:isopmain2}
|\partial_\set A| 
\geq \max\left\{\frac{1}{d\cdot 2^d}\cdot |\partial_{\mathbf G}\setg|,\frac{|\setd|}{(11\cdot L_s)^d} \right\} ,\
\end{equation}
and there exists $\rho_{\scriptscriptstyle \ref{l:isopmain}}>0$ such that if $\frac{r_0}{l_0}<\rho_{\scriptscriptstyle \ref{l:isopmain}}$ then 
\begin{equation}\label{eq:isopmain1}
|A| \leq 6^d\cdot L_0^d\cdot|\setg| + |\setd| .\
\end{equation}
\end{lemma}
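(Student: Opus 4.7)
I plan to prove the three asserted inequalities by three charging arguments; in each case I translate structural information about $\setg$ or $\setd$ into a path in $\set$ joining $A$ to $\mathcal C_{2R}\setminus A$, observe that this path must cross $\partial_\set A$, and control the multiplicity with which each edge gets charged. For $|\partial_\set A|\geq(d\cdot 2^d)^{-1}|\partial_{\mathbf G}\setg|$, I pick an unordered pair $\{x,y\}\in\partial_{\mathbf G}\setg$. By Proposition~\ref{prop:Gproperties}(a) both $x,y$ are $0$-good, so Lemma~\ref{l:fromG0toZd}(b) produces a subgraph of $\set\cap((x+[0,2L_0)^d)\cup(y+[0,2L_0)^d))$ joining the unique large clusters $\mathcal C_x$ and $\mathcal C_y$. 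Since $x\in\setg$, $\mathcal C_x\cap A\neq\emptyset$; since $y\in\mathbf G\setminus\setg$ and $\mathcal C_y\subset\mathcal C_{2R}$ (as established just before the statement of the lemma), $\mathcal C_y\subset\mathcal C_{2R}\setminus A$. Concatenating paths inside $\mathcal C_x$, the local connection, and then inside $\mathcal C_y$ yields a path in $\set\cap((x+[0,2L_0)^d)\cup(y+[0,2L_0)^d))$ from $A$ to $\mathcal C_{2R}\setminus A$, hence an edge of $\partial_\set A$ inside this region, which I charge to $\{x,y\}$. A fixed edge is charged by at most $d\cdot 2^d$ unordered pairs: either endpoint lies in at most $2^d$ lattice translates $w+[0,2L_0)^d$ with $w\in\GG_0$, and each such $w$ has $2d$ $\GG_0$-neighbours.

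For $|\partial_\set A|\geq(11L_s)^{-d}|\setd|$, I fix $v\in\setd$ together with some $y\in\mathcal C_{2R}\setminus A$ at $\ell^\infty$-distance at most $2L_s$. The component $\mathcal C_{2R}$ contains all the $\mathcal C_x$ for $x\in\mathbf G\subset\ballZ(0,2R-L_s)$, so its $\set$-connected component has $\ell^1$-diameter well above $L_s$; in particular $v,y\in\set_{L_s}$. Applying clause (b) of Definition~\ref{def:mathcalh} (available on $\mathcal H$) with $z=v$ connects $v$ to $y$ inside $\set\cap(v+[-4L_s,4L_s)^d)$, producing an edge of $\partial_\set A$ in this box. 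Charging this edge to $v$, each edge gets charged by at most $(8L_s)^d\leq(11L_s)^d$ vertices, since $v$ must lie in a cube of side $8L_s$ around either endpoint of the edge.

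The volume bound \eqref{eq:isopmain1} rests on the observation that \emph{if $v\in A\setminus\setd$ and $x\in\mathbf G$ shares the same $L_s$-box as $v$, then automatically $x\in\setg$.} Indeed $\mathcal C_x\subset x+[0,L_0)^d$ and $v,x$ lie in a common $x_s+[0,L_s)^d$ with $x_s\in\GG_s$, so every $z\in\mathcal C_x$ satisfies $|v-z|_\infty<L_s+L_0\leq 2L_s$; since $\mathcal C_x\subset\mathcal C_{2R}$ and $v\notin\setd$, this forces $\mathcal C_x\subset A$, hence $x\in\setg$. Proposition~\ref{prop:Gproperties}(b) then gives $|\setg\cap(x_s+[0,L_s)^d)|\geq(L_s/L_0)^d f_d(r_0/l_0)$ for every $x_s\in\GG_s$ with $(A\setminus\setd)\cap(x_s+[0,L_s)^d)\neq\emptyset$, so
\[
|(A\setminus\setd)\cap(x_s+[0,L_s)^d)|\leq L_s^d\leq\frac{L_0^d}{f_d(r_0/l_0)}\,|\setg\cap(x_s+[0,L_s)^d)|.
\]
Summing over $x_s$ yields $|A\setminus\setd|\leq L_0^d\,f_d(r_0/l_0)^{-1}|\setg|$, and since $f_d(r_0/l_0)\to 1$ as $r_0/l_0\to 0$ by \eqref{eq:fj}, I select $\rho_{\scriptscriptstyle \ref{l:isopmain}}>0$ small enough that $f_d(r_0/l_0)^{-1}\leq 6^d$.

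The main obstacle is the volume bound: a naive ``assign each $v\in A$ to a nearby $\setg$-vertex'' approach fails because the fat set $\mathbf G$ may have gaps of size much larger than $L_0$ (the deleted boxes of size $r_{k-1}L_{k-1}$ at renormalization scale $k\leq r$), so one cannot in general locate a $\setg$-vertex within $O(L_0)$ of a generic $v$. The observation above sidesteps this by using $\setd$ to enforce $L_s$-scale regularity: outside $\setd$, every $\mathbf G$-vertex sharing an $L_s$-box with $v$ must belong to $\setg$, so the almost-full density of $\mathbf G$ in every $L_s$-box transfers to $\setg$, and the trivial bound $L_s^d$ on the volume is then controlled by the density lower bound $(L_s/L_0)^d f_d$, at the cost of only the factor $f_d^{-1}\leq 6^d$.
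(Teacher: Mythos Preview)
Your proof is correct and follows essentially the same three charging arguments as the paper. The only notable variation is in \eqref{eq:isopmain1}: the paper centers a box $v+[-2L_s,2L_s)^d$ at each $v\in A\setminus\setd$ and counts the $\setg$-vertices inside, whereas you partition into aligned $L_s$-boxes and use Proposition~\ref{prop:Gproperties}(b) boxwise. Both rest on the same observation that $\mathcal C_x\subset\mathcal C_{2R}\cap\ballZ(v,2L_s)\subset A$ for the relevant $x\in\mathbf G$; your aligned-box version is a clean way to organize the counting and makes the role of $f_d(r_0/l_0)$ explicit.
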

\begin{lemma}\label{l:isopbaldG}
There exist $\gamma_{\scriptscriptstyle \ref{l:isopbaldG}} >0$ and 
$\rho_{\scriptscriptstyle \ref{l:isopbaldG}}>0$ such that if $\frac{r_0}{l_0}<\rho_{\scriptscriptstyle \ref{l:isopbaldG}}$, then 
for any $\mathbf A\subset\mathbf G\cap\ballZ(0,2R-4L_s)$ 
such that $|\mathbf A|\geq 7^{-d}\cdot (\frac{L_s}{L_0})^{2d^2}$, 
we have 
$|\partial_{\mathbf G} \mathbf A|\geq \gamma_{\scriptscriptstyle \ref{l:isopbaldG}}\cdot |\mathbf A|^{\frac{d-1}{d}}$. 
\end{lemma}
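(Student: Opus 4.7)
\medskip

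\textbf{Proof plan for Lemma~\ref{l:isopbaldG}.} I follow the coarsening strategy outlined in the introduction. Define
\[
\setgs \;=\; \Bigl\{\, x_s \in \GG_s \;:\; |\mathbf A \cap (x_s + [0,L_s)^d)| \;\geq\; \tfrac{1}{2}\, |\mathbf G \cap (x_s + [0,L_s)^d)|\, \Bigr\} ,
\]
so $\setgs$ records those disjoint $L_s$-boxes in which $\mathbf A$ occupies at least half of the good vertices of $\mathbf G$. By Proposition~\ref{prop:Gproperties}(b), each $L_s$-box contains $\asymp (L_s/L_0)^d$ vertices of $\mathbf G$, so $|\setgs|$ is comparable to the number of ``dense'' boxes. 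Choose a small constant $\kappa = \kappa(d) > 0$ to be fixed and dichotomize on whether $|\setgs| < \kappa \cdot |\mathbf A|\, L_0^d / L_s^d$ or not.

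\medskip

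\textbf{Case 1 (few dense boxes).} If $|\setgs| < \kappa \cdot |\mathbf A|\, L_0^d / L_s^d$, then the dense boxes account for at most a small fraction of $|\mathbf A|$, so a positive fraction of the vertices of $\mathbf A$ lies in $L_s$-boxes where $\mathbf A$ occupies strictly less than half of $\mathbf G$. Each such ``low-density'' box therefore meets both $\mathbf A$ and $\mathbf G \setminus \mathbf A$, and since by Proposition~\ref{prop:Gproperties}(b) the restriction of $\mathbf G$ to that box is connected in $\GG_0$, it contributes at least one edge to $\partial_{\mathbf G}\mathbf A$. The number of low-density boxes meeting $\mathbf A$ is at least $c\cdot |\mathbf A|\,(L_0/L_s)^d$, so
\[
|\partial_{\mathbf G}\mathbf A| \;\geq\; c\cdot |\mathbf A|\, L_0^d / L_s^d .
\]
The hypothesis $|\mathbf A|\geq 7^{-d}(L_s/L_0)^{2d^2}$ implies $|\mathbf A|^{1/d} \geq c' \cdot L_s^d/L_0^d$, so the displayed bound majorizes $\gamma \cdot |\mathbf A|^{(d-1)/d}$, as required.

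\medskip

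\textbf{Case 2 (many dense boxes).} If $|\setgs| \geq \kappa \cdot |\mathbf A|\, L_0^d / L_s^d$, apply the standard discrete isoperimetric inequality in the lattice $\GG_s \cong L_s \Z^d$ to obtain
\[
|\partial_{\GG_s} \setgs| \;\geq\; c\, |\setgs|^{(d-1)/d} \;\geq\; c'\, |\mathbf A|^{(d-1)/d}\, L_0^{d-1} / L_s^{d-1} .
\]
For every pair $(x_s, \widetilde x_s)\in \partial_{\GG_s}\setgs$ the two adjacent $L_s$-boxes have densities of $\mathbf A$ in $\mathbf G$ that are respectively $\geq 1/2$ and $< 1/2$, so the $3 L_s$-box $x_s + [-L_s,2L_s)^d$ contains a positive fraction both of $\mathbf A$ and of $\mathbf G \setminus \mathbf A$. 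Since $\mathbf A \subset \ballZ(0,2R-4L_s)$, this $3L_s$-box lies in the domain where Proposition~\ref{prop:Gproperties}(c) gives us enough slice-connectivity of $\mathbf G$. Lemma~\ref{l:isopa} then produces at least $c\, L_s^{d-1}$ edges of $\partial_{\mathbf G}\mathbf A$ inside the $3 L_s$-box. A bounded-overlap packing of these $3L_s$-boxes (each is charged by at most a constant number of boundary pairs, and each edge of $\partial_{\mathbf G}\mathbf A$ is counted a bounded number of times) yields
\[
|\partial_{\mathbf G}\mathbf A| \;\geq\; c''\, |\partial_{\GG_s} \setgs| \cdot L_s^{d-1} \;\geq\; c'''\, |\mathbf A|^{(d-1)/d}.
\]

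\medskip

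\textbf{Where the difficulty is.} Essentially all of the combinatorial work is hidden inside Lemma~\ref{l:isopa} (the local isoperimetric inequality inside a $3L_s$-box for sets of intermediate density), whose inductive proof on the dimension $j$ exploits the precise multi-scale perforation structure of $\mathbf G$. The argument above only uses the coarse conclusion of Lemma~\ref{l:isopa}, the graph-isoperimetric inequality in $\GG_s$, and the volume/connectivity properties of $\mathbf G$ furnished by Proposition~\ref{prop:Gproperties}; the delicate points are the selection of $\kappa$ to make the Case 1 counting honest, and the verification that in Case 2 the $3L_s$-boxes indexed by $\partial_{\GG_s}\setgs$ can be chosen with bounded overlap so that their local boundary contributions add.
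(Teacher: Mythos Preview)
Your proposal is correct and follows essentially the same approach as the paper's proof: coarsen to $L_s$-boxes via $\setgs$, split into the sparse case (few dense boxes, where connectivity of $\mathbf G$ in each box yields one boundary edge per mixed box) and the localized case (many dense boxes, where the lattice isoperimetric inequality in $\GG_s$ combined with Lemma~\ref{l:isopa} in each $3L_s$-box gives the bound). The paper fixes $\kappa=\tfrac12$ and defines $\setgs$ via the absolute threshold $\tfrac12(L_s/L_0)^d$ rather than $\tfrac12|\mathbf G\cap(x_s+[0,L_s)^d)|$, but these are equivalent once $f_d(r_0/l_0)>\tfrac12$; your constants feed into Lemma~\ref{l:isopa} (more precisely Lemma~\ref{l:isopaj}) with a slightly different $\epsilon$, which is harmless.
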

Now we finish the proof of Theorem~\ref{thm:isoperimetric} using the two lemmas. 
We prove Lemma~\ref{l:isopmain} in Section~\ref{sec:isopmain} and Lemma~\ref{l:isopbaldG} in Section~\ref{sec:isopbaldG}.

\begin{proof}[Proof of Theorem~\ref{thm:isoperimetric}]
We take $L_0$, $l_0$, and $r_0$ as in \eqref{def:scales} satisfying the statements of Lemmas~\ref{l:Guxk}, \ref{l:isopmain}, and \ref{l:isopbaldG}, and also
\eqref{eq:L_0:condition}. 
We also assume that $4r_0<l_0$ and $5L_s<R$. 
It suffices to show that the event $\mathcal H$ implies the event in \eqref{eq:isoperimetric}.

Fix a subset $A\subset\mathcal C_R$ such that $|A|\geq R^{\ei}$, and define $\setg$ and $\setd$ as in the statement of Lemma~\ref{l:isopmain}. 
Note that $\setg\subset\mathbf G\cap\ballZ(0,R+L_0)\subset\mathbf G\cap\ballZ(0,2R-4L_s)$ if $5L_s<R$. 
First, we claim that 
\begin{equation}\label{eq:ineqmax}
\max\left\{|\partial_{\mathbf G}\setg|,\frac{|\setd|}{L_s^d} \right\}
\geq \min(1,\gamma_{\scriptscriptstyle \ref{l:isopbaldG}})\cdot \max\left\{|\setg|^{\frac{d-1}{d}},\frac{|\setd|}{L_s^d} \right\} .\ 
\end{equation} 
Indeed, if $|\setg|^{\frac{d-1}{d}} < \frac{|\setd|}{L_s^d}$, then \eqref{eq:ineqmax} trivially holds. 
On the other hand, if $|\setg|^{\frac{d-1}{d}} \geq \frac{|\setd|}{L_s^d}$, then using \eqref{eq:isopmain1} we get
\[
|A| \leq 6^d\cdot L_0^d\cdot|\setg| + L_s^d\cdot |\setg|^{\frac{d-1}{d}} \leq 7^d\cdot L_s^d\cdot|\setg| .\
\]
By the assumption on $|A|$ and using \eqref{def:s}, we have that $|A|\geq R^{\ei}\geq L_s^{3d^2}$. 
Hence $|\setg|\geq 7^{-d}\cdot L_s^{2d^2}\geq 7^{-d}\cdot (\frac{L_s}{L_0})^{2d^2}$, 
and \eqref{eq:ineqmax} follows from Lemma~\ref{l:isopbaldG} applied to $\mathbf A = \setg$.

\bigskip

It follows from \eqref{eq:isopmain2}, \eqref{eq:isopmain1}, and \eqref{eq:ineqmax} that
\begin{multline*}
\frac{|\partial_\set A|}{|A|^{\frac{d-1}{d}}}
\geq  \frac{\frac{1}{d\cdot 11^d}\cdot\min(1,\gamma_{\scriptscriptstyle \ref{l:isopbaldG}})\cdot\max\left\{|\setg|^{\frac{d-1}{d}},\frac{|\setd|}{L_s^d} \right\}}
{(6^d\cdot L_0^d\cdot|\setg| + |\setd|)^{\frac{d-1}{d}}}\\
\geq  \frac{\frac{1}{d\cdot 11^d}\cdot\min(1,\gamma_{\scriptscriptstyle \ref{l:isopbaldG}})\cdot \max\left\{|\setg|^{\frac{d-1}{d}},\frac{|\setd|}{L_s^d} \right\}}
{6^{d-1}\cdot L_0^{d-1}\cdot|\setg|^{\frac{d-1}{d}} + |\setd|^{\frac{d-1}{d}}} .\
\end{multline*}
On the one hand, if $L_0^d\cdot |\setg| \geq |\setd|$, then 
\[
\frac{|\partial_\set A|}{|A|^{\frac{d-1}{d}}}
\geq  \frac{\frac{1}{d\cdot 11^d}\cdot\min(1,\gamma_{\scriptscriptstyle \ref{l:isopbaldG}})\cdot |\setg|^{\frac{d-1}{d}}}
{6^{d-1}\cdot L_0^{d-1}\cdot|\setg|^{\frac{d-1}{d}} + |\setd|^{\frac{d-1}{d}}} 
\geq \frac{\frac{1}{d\cdot 11^d}\cdot\min(1,\gamma_{\scriptscriptstyle \ref{l:isopbaldG}})}{(7\cdot L_0)^{d-1}} .\
\]
On the other hand, if $L_0^d\cdot |\setg| \leq |\setd|$, then by \eqref{eq:isopmain1} and \eqref{def:s}, 
$|\setd| \geq 7^{-d}\cdot |A| \geq 7^{-d}\cdot R^{\ei}\geq 7^{-d}\cdot L_s^{3d^2}$, and we get
\[
\frac{|\partial_\set A|}{|A|^{\frac{d-1}{d}}}
\geq  \frac{\frac{1}{d\cdot 11^d}\cdot\min(1,\gamma_{\scriptscriptstyle \ref{l:isopbaldG}})\cdot|\setd|^{\frac 1d}}{7^{d-1}\cdot L_s^d} 
\geq \frac{1}{d\cdot 77^d}\cdot \min(1,\gamma_{\scriptscriptstyle \ref{l:isopbaldG}}) .\
\]
This finishes the proof of Theorem~\ref{thm:isoperimetric} with the choice of 
$\gamma_{\scriptscriptstyle \ref{thm:isoperimetric}} = \frac{1}{d\cdot 77^d\cdot L_0^{d-1}}\cdot\min(1,\gamma_{\scriptscriptstyle \ref{l:isopbaldG}})$.
\end{proof}

\subsection{Proof of Lemma~\ref{l:isopmain}}\label{sec:isopmain}
\noindent

The proof of both \eqref{eq:isopmain2} and \eqref{eq:isopmain1} goes by constructing certain mappings 
from $\partial_{\mathbf G}\setg$ to $\partial_\set A$ (a $d\cdot 2^d$ to $1$ map), from $\setd$ to $\partial_\set A$ (a $(11\cdot L_s)^d$ to $1$ map), 
and from $A\setminus\setd$ to $\setg$ (a $(6\cdot L_0)^d$ to $1$ map). 

\medskip

Recall the definition of $\mathcal C_x$ from Lemma~\ref{l:fromG0toZd}(a). 

\medskip

\begin{proof}[Proof of \eqref{eq:isopmain2}] 
Note that for any $x\in\setg$ and $y\in \mathbf G\setminus \setg$ such that $|x-y|_1 = L_0$, 
$\mathcal C_x\cap A\neq \emptyset$ and $\mathcal C_y \subset \mathcal C_{2R} \setminus A$. 
By Lemma~\ref{l:fromG0toZd}, $\mathcal C_x$ and $\mathcal C_y$ are connected in $\set\cap((x+[0,2L_0)^d)\cup(y+[0,2L_0)^d))$. 
Each path in $\set$ connecting $\mathcal C_x\cap A$ and $\mathcal C_y$ contains an edge from $\partial_{\set} A$. 
This implies that 
\[
|\partial_\set A| \geq \frac{1}{d\cdot 2^d}\cdot |\partial_{\mathbf G}\setg| ,\
\]
where the constant $\frac{1}{d\cdot 2^d}$ takes care for overcounting. 
We next show that 
\[
|\partial_\set A| \geq \frac{|\setd|}{(11\cdot L_s)^d} .\
\]
Indeed, by the definition of $\setd$, for any $x\in \setd$, there exists $y\in \mathcal C_{2R}\setminus A$ such that $|x-y|_\infty\leq 2L_s$. 
By the second part of the definition of $\mathcal H$, we conclude that any such $x$ and $y$ are connected by a path in $\set\cap\ballZ(x,4L_s)$.  
Since $x\in A$ and $y\notin A$, this path necessarily contains an edge from $\partial_\set A$. 
This implies that 
\[
|\setd| \leq |\{(x,e)~:~x\in \setd, e\in \partial_\set A\cap\ballZ(x,4L_s)\}|\leq |\ballZ(0,4L_s+1)|\cdot |\partial_\set A| ,\
\]
and the claim follows.
\end{proof}

\bigskip

\noindent
\begin{proof}[Proof of \eqref{eq:isopmain1}]
We need to show that $|A\setminus\setd| \leq 6^d\cdot L_0^d\cdot |\setg|$. 

We choose $\rho_{\scriptscriptstyle \ref{l:isopmain}}>0$ such that if $\frac{r_0}{l_0}<\rho_{\scriptscriptstyle \ref{l:isopmain}}$ then 
\begin{equation}\label{eq:r_0l_0:isopmain}
f_d\left(\frac{r_0}{l_0}\right)>\frac 12 .\
\end{equation}
Then, by Proposition~\ref{prop:Gproperties}(b), for any $z\in \ballZ(0,R)$, the box $(z+[-2L_s,2L_s)^d)$ contains at least $(\frac{L_s}{L_0})^d$ vertices from $\mathbf G$.
Since for any $y\in\mathbf G$, $\mathcal C_y\subset \mathcal C_{2R}$, 
we have that for any $x\in A\setminus \setd$, the set $x + [-2L_s,2L_s)^d$ contains at least $(\frac{L_s}{L_0})^d$ vertices from $\setg$. 
(Mind that every vertex from $\mathcal C_{2R}\cap(x+[-2L_s,2L_s)^d$ must be in $A$ by the definition of $\setd$.)
Thus we have a map from $A\setminus\setd$ to a subset of $\setg$ of size at least $(\frac{L_s}{L_0})^d$ such that every vertex of $\setg$ is 
in the image of at most $6^d\cdot L_s^d$ vertices from $A\setminus\setd$. 
This implies that $|A\setminus\setd| \leq 6^d\cdot L_0^d\cdot |\setg|$, and \eqref{eq:isopmain1} is proved. 
\end{proof}

\subsection{Proof of Lemma~\ref{l:isopbaldG}}\label{sec:isopbaldG}

The statement of the lemma concerns with sets $\mathbf A\subset\mathbf G\cap\ballZ(0,2R-4L_s)$ of large enough size, 
but not necessarily comparable with the size of $\GG_0\cap\ballZ(0,2R-4L_s)$.  
We distinguish the cases when $\mathbf A$ is sparse, and when it is localized. 
In the first case, we prove that the boundary of $\mathbf A$ is almost of the same size as the volume of $\mathbf A$. 
In the second case, we estimate the boundary of $\mathbf A$ locally in each of the boxes of side length $3L_s$ which 
has dense intersection with $\mathbf A$ and with its complement, see Lemma~\ref{l:isopa}. 
More precisely, we show that the boundary of $\mathbf A$ in each of these boxes is at least of order $L_s^{d-1}$. 
Using the isoperimetric inequality for subsets of the lattice $\GG_s$ we show that the number of disjoint such boxes is of order $\frac{|\mathbf A|^{\frac{d-1}{d}}}{L_s^{d-1}}$. 
Thus we show that the boundary of $\mathbf A$ contains an order of $\frac{|\mathbf A|^{\frac{d-1}{d}}}{L_s^{d-1}}$ disjoint pieces 
of size $L_s^{d-1}$. 
Before we proceed with the proof, we state the key ingredient of the proof as Lemma~\ref{l:isopa}.

\begin{lemma}\label{l:isopa}
Let $x\in\GG_s\cap\ballZ(0,2R-3L_s)$. Denote by $\mathbf g$ the graph $\mathbf G\cap(x+[-L_s,2L_s)^d)$. 
For any subset $\mathbf a$ of $\mathbf g$, let $\partial_{\mathbf g} \mathbf a$ be the set of pairs of vertices in $\mathbf g$ 
at $\ell^1$-distance $L_0$ (in $\Z^d$) from each other so that one of them is in $\mathbf a$ and the other in $\mathbf g\setminus \mathbf a$.

There exists $\gamma_{\scriptscriptstyle \ref{l:isopa}}>0$ and $\rho_{\scriptscriptstyle \ref{l:isopa}}>0$ such that 
if $\frac{r_0}{l_0}<\rho_{\scriptscriptstyle \ref{l:isopa}}$ then 
for any subset $\mathbf a$ of $\mathbf g$ with $|\mathbf a|\in[\frac 12 (\frac{L_s}{L_0})^d, (3^d-\frac 12)(\frac{L_s}{L_0})^d]$, 
we have $|\partial_{\mathbf g} \mathbf a| \geq \gamma_{\scriptscriptstyle \ref{l:isopa}}\cdot (\frac{L_s}{L_0})^{d-1}$.
\end{lemma}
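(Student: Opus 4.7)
The plan is to deduce Lemma~\ref{l:isopa} from the stronger Lemma~\ref{l:isopaj}, which gives an isoperimetric inequality inside every $j$-dimensional coordinate sub-slice of $\mathbf{g}$ that carries an $\mathbf{a}$-density bounded away from $0$ and $1$. Lemma~\ref{l:isopa} is then the special case $j=d$: the hypothesis $|\mathbf{a}| \in [\tfrac12,\,3^d-\tfrac12](L_s/L_0)^d$, combined with Proposition~\ref{prop:Gproperties}(c) applied to $\mathbf{g}$ itself, places the density of $\mathbf{a}$ in $\mathbf{g}$ inside $[\epsilon,\,3^d-\epsilon]$ for $\epsilon=1/(2\cdot 3^d)$, which is the range in which Lemma~\ref{l:isopaj} operates. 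Lemma~\ref{l:isopaj} is proved by induction on $j$, with the substantive content in the base case $j=2$ and a dimension-reduction argument for $j\geq 3$.

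For the base case $j=2$, I would first reduce to the situation where the restriction $\mathbf{a}'$ of $\mathbf{a}$ to a two-dimensional slice is connected and every component of its complement has volume of order $(L_s/L_0)^2$; smaller components can be absorbed at a cost controlled by the two-dimensional connectivity provided by Proposition~\ref{prop:Gproperties}(c). Then, viewing $\mathbf{a}'$ inside the unperforated lattice $L_0\Z^2$, the standard discrete isoperimetric inequality gives a boundary of size at least of order $L_s/L_0$. The key step, corresponding to equation \eqref{eq:relationboundaries} in the paper, is to show that $\partial_{\mathbf{g}'}\mathbf{a}'$ loses only a $(1-f_2(r_0/l_0))$-fraction of this boundary because of the multi-scale perforation; this requires the precise nested structure of the $\mathcal{G}_i$'s (each $L_i$-box loses at most three $r_{i-1}L_{i-1}$-subboxes), not merely the properties summarized in Proposition~\ref{prop:Gproperties}, and is controlled by choosing $r_0/l_0$ so small that $f_2(r_0/l_0)$ is close to $1$.

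For the inductive step $j\geq 3$, I would slice a given $j$-dimensional subcube into $3L_s/L_0$ parallel $(j-1)$-dimensional hyperplanes along a coordinate direction, and call a hyperplane \emph{balanced} if the rescaled $\mathbf{a}$-density in it lies in $[\epsilon,\,3^{j-1}-\epsilon]$. If a positive fraction of the hyperplanes are balanced, the induction hypothesis applied to each contributes at least $c_{j-1}(L_s/L_0)^{j-2}$ boundary edges, which sum to the desired $(L_s/L_0)^{j-1}$. Otherwise, averaging forces a positive fraction of hyperplanes to be \emph{overcrowded} (density $>3^{j-1}-\epsilon$) and a positive fraction \emph{undercrowded} (density $<\epsilon$); I would then consider the roughly $(L_s/L_0)^{j-2}$ two-dimensional coordinate slices transverse to the first direction that hit both families, cf.\ Figure~\ref{fig:slices}. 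Each such two-dimensional slice inherits a density in $[\epsilon,\,3^2-\epsilon]$, so the base case applies to yield at least $c_2(L_s/L_0)$ boundary edges per slice. The main obstacle I expect is the base case: controlling the boundary loss from the perforation requires opening the black box of Proposition~\ref{prop:Gproperties} and arguing directly with the recursive construction of $\mathbf{G}$, and the reduction to connected sets must be executed carefully, since in two dimensions the perforation can pinch otherwise natural sets.
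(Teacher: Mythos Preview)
Your proposal is correct and follows essentially the same route as the paper: Lemma~\ref{l:isopa} is obtained from Lemma~\ref{l:isopaj} by setting $j=d$ and $\epsilon=\tfrac{1}{2\cdot 3^d}$, and Lemma~\ref{l:isopaj} is proved by induction on $j$, with the $j=2$ base case handled by reducing to connected sets, comparing the boundary in $\mathbf g'$ to that in the full slice $Q$ via the multi-scale structure of the $\mathcal G_i$'s, and the step $j\geq 3$ handled by the balanced/overcrowded/undercrowded dichotomy on $(j-1)$-dimensional slices followed by transverse two-dimensional slices. Two small imprecisions to watch: in the $j=2$ reduction the paper only requires complement components of size $\geq 3L_s/L_0$ (not $(L_s/L_0)^2$), and the boundary-loss bound \eqref{eq:relationboundaries} is $1-C\sum_k r_k/l_k$ rather than $1-f_2(r_0/l_0)$, though both are made small by the same choice of $r_0/l_0$.
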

We postpone the proof of Lemma~\ref{l:isopa} until Section~\ref{sec:isopa}, and now show how Lemma~\ref{l:isopbaldG} follows from Lemma~\ref{l:isopa}. 

Take $\mathbf A\subset\mathbf G\cap\ballZ(0,2R-4L_s)$ such that $|\mathbf A| \geq 7^{-d}\cdot(\frac{L_s}{L_0})^{2d^2}$. 
Note that 
\[
\left|\{x\in\GG_s~:~\mathbf A\cap(x+[0,L_s)^d)\neq\emptyset\}\right| \geq |\mathbf A|\cdot\left(\frac{L_s}{L_0}\right)^{-d} .\
\]
Let $\setgs$ be the set of $x\in\GG_s$ such that 
\begin{equation}\label{eq:setgs}
|\mathbf A\cap(x+[0,L_s)^d)| \geq \frac 12\cdot \left(\frac{L_s}{L_0}\right)^d .\
\end{equation}
Note that $\setgs\subset\GG_s\cap\ballZ(0,2R-3L_s)$. 

By Proposition~\ref{prop:Gproperties}(b), for any $x\in\GG_s\cap\ballZ(0,2R-2L_s)$, $|\mathbf G\cap(x+[0,L_s)^d)|>f_d(\frac{r_0}{l_0})\cdot(\frac{L_s}{L_0})^d$, 
where $f_d$ is defined in \eqref{eq:fj}. 
By \eqref{eq:r_0l_0:isopmain}, for any choice of the ratio $\frac{r_0}{l_0}<\rho_{\scriptscriptstyle \ref{l:isopmain}}$, $f_d(\frac{r_0}{l_0})>\frac{1}{2}$. 
This implies that for any $x\in\GG_s\cap\ballZ(0,2R-2L_s)$, $|\mathbf G\cap(x+[0,L_s)^d)| > \frac 12\cdot(\frac{L_s}{L_0})^d$. 
Thus, if $|\setgs|< \frac 12\cdot |\mathbf A|\cdot (\frac{L_s}{L_0})^{-d}$, then 
the number of $x\in\GG_s$ such that $x + [0,L_s)^d$ intersects {\it both} $\mathbf A$ and $\mathbf G\setminus \mathbf A$
is at least $\frac 12 \cdot |\mathbf A|\cdot (\frac{L_s}{L_0})^{-d}$. 
Since $\mathbf G\cap(x+[0,L_s)^d)$ is connected for each such $x$, 
there is an edge in $\partial_{\mathbf G} \mathbf A$ with both end-vertices in $\mathbf G\cap(x+[0,L_s)^d)$. Therefore,
\[
|\partial_{\mathbf G} \mathbf A|\geq 
\frac 12 \cdot |\mathbf A|\cdot \left(\frac{L_s}{L_0}\right)^{-d}
\geq \frac{1}{14}\cdot |\mathbf A|^{\frac{d-1}{d}}\cdot \left(\frac{L_s}{L_0}\right)^{2d}\cdot \left(\frac{L_s}{L_0}\right)^{-d} 
\geq \frac{1}{14}\cdot |\mathbf A|^{\frac{d-1}{d}} .\
\]
Assume now that $|\setgs|\geq \frac 12\cdot |\mathbf A|\cdot (\frac{L_s}{L_0})^{-d}$. 
Let $\partial_{\GG_s} \setgs$ be the set of edges of $\GG_s$ with exactly one end-vertex in $\setgs$. 
By the isoperimetric inequality on $\GG_s$, 
\[
|\partial_{\GG_s} \setgs|\geq c_\star \cdot |\setgs|^{\frac{d-1}{d}}
\geq \frac 12\cdot c_\star\cdot |\mathbf A|^{\frac{d-1}{d}}\cdot \left(\frac{L_s}{L_0}\right)^{1-d} ,\
\]
where $c_\star>0$ is the isoperimetric constant for $\Z^d$.
By the definition of $\setgs$, for any $y\in\GG_s\setminus\setgs$, $|\mathbf A\cap(y+[0,L_s)^d)| \leq \frac 12 \cdot (\frac{L_s}{L_0})^d$. 
Therefore, for any $x\in \setgs$ such that there exists $y\in\GG_s\setminus\setgs$ with $|x-y|_1 = L_s$, 
$\frac 12 \cdot (\frac{L_s}{L_0})^d\leq |\mathbf A\cap(x+[-L_s,2L_s)^d)| \leq (3^d - \frac 12) \cdot (\frac{L_s}{L_0})^d$.
Since $\setgs\subset\GG_s\cap\ballZ(0,2R-3L_s)$, 
we can apply Lemma~\ref{l:isopa} to $\mathbf g = \mathbf G\cap(x+[-L_s,2L_s)^d)$ and $\mathbf a = \mathbf A\cap(x+[-L_s,2L_s)^d)$, to obtain that 
if $\frac{r_0}{l_0}<\rho_{\scriptscriptstyle \ref{l:isopa}}$, then 
\[
|\partial_{\mathbf G}\mathbf A \cap(x+[-L_s,2L_s)^d)|\geq \gamma_{\scriptscriptstyle \ref{l:isopa}}\cdot \left(\frac{L_s}{L_0}\right)^{d-1} .\
\] 
We are essentially done. 
Let ${\sum}^*$ be the sum over $x\in\setgs$ such that there exists $y\in\GG_s\setminus\setgs$ with $|x-y|_1 = L_s$, 
i.e., $\{x,y\}\in \partial_{\GG_s} \setgs$. 
Combining the last two estimates we get
\begin{multline*}
|\partial_{\mathbf G} \mathbf A| \geq 
\frac{1}{3^d}\cdot {\sum}^*
|\partial_{\mathbf G}\mathbf A \cap(x+[-L_s,2L_s)^d)|\\
\geq \frac{1}{3^d}\cdot\left(\frac{1}{2d}\cdot \frac 12\cdot c_\star\cdot |\mathbf A|^{\frac{d-1}{d}}\cdot \left(\frac{L_s}{L_0}\right)^{1-d}\right)
\cdot\left(\gamma_{\scriptscriptstyle \ref{l:isopa}}\cdot \left(\frac{L_s}{L_0}\right)^{d-1}\right)
\geq \left(\frac {1}{4d\cdot 3^d}\cdot c_\star\cdot\gamma_{\scriptscriptstyle \ref{l:isopa}}\right)\cdot |\mathbf A|^{\frac{d-1}{d}} .\
\end{multline*}
Our final choice of $\rho_{\scriptscriptstyle \ref{l:isopbaldG}}$ and $\gamma_{\scriptscriptstyle \ref{l:isopbaldG}}$ is 
\[
\rho_{\scriptscriptstyle \ref{l:isopbaldG}} = \min\left(\rho_{\scriptscriptstyle \ref{l:isopmain}},\rho_{\scriptscriptstyle \ref{l:isopa}}\right) \quad\mbox{and}\quad
\gamma_{\scriptscriptstyle \ref{l:isopbaldG}} = \min\left(\frac{1}{14},\frac {1}{4d\cdot 3^d}\cdot c_\star\cdot\gamma_{\scriptscriptstyle \ref{l:isopa}}\right) ,\
\]
where $c_\star$ is the isoperimetric constant for $\Z^d$.
The proof of Lemma~\ref{l:isopbaldG} is complete, subject to Lemma~\ref{l:isopa}.\qed

\subsection{Proof of Lemma~\ref{l:isopa}}\label{sec:isopa}

We would like to prove that for any subset $\mathbf a$ of $\mathbf g$ which occupies a non-trivial (bounded away from $0$ and $1$) fraction of vertices in 
$\GG_0\cap(x_s + [-L_s,2L_s)^d)$, $x_s\in\GG_s$, 
its boundary is at least an order of $(\frac{L_s}{L_0})^{d-1}$. 
For this we prove a much stronger statement that for any $j$-dimensional ($2\leq j\leq d$) subbox of $\GG_0\cap(x_s + [-L_s,2L_s)^d)$ containing a non-trivial 
fraction of vertices of $\mathbf a$, the boundary of $\mathbf a$ in the restriction of $\mathbf g$ to this $j$-dimensional subbox is at least an order of $(\frac{L_s}{L_0})^{j-1}$, 
see Lemma~\ref{l:isopaj}. 
This statement is proved by induction on $j$. 
The case $j=2$ is the most involved. 
We first reduce the problem to connected sets with complement consisting of large connected components, see \eqref{eq:isopaconnected}. 
The boundary (in $\GG_0$) of such sets is large (see \eqref{eq:isopbox}) and consists of only large $*$-connected pieces (see \eqref{eq:lbDelta}). 
The key step in the proof is to show that each individual $*$-connected piece of the boundary consists mostly of the edges from $\mathbf g$ 
(see \eqref{eq:relationboundaries}). This is done by exploiting further the multi-scale construction of $\mathbf G$. 
In the case $j\geq 3$, we use a dimension reduction argument. 
We partition the $j$-dimensional box into smaller dimensional subboxes, and estimate the part of the boundary of $\mathbf a$ in each 
individual subbox where $\mathbf a$ has a non-trivial density. 

\bigskip

The main result of this section is the following lemma.
\begin{lemma}\label{l:isopaj}
For any $x\in\GG_s\cap\ballZ(0,2R-3L_s)$, $y\in\GG_0\cap(x+[-L_s,2L_s)^d)$, $2\leq j\leq d$, and pairwise orthogonal $e_1,\dots,e_j\in\Z^d$ with $|e_i|_1=1$, 
let $\mathbf g'$ be the restriction of $\mathbf g$ to the $j$-dimensional subcube $(y + \sum_{i=1}^j\Z\cdot e_i)\cap(x+[-L_s,2L_s)^d)$ of $(x+[-L_s,2L_s)^d)$. 
For any $\epsilon>0$ there exists $\gamma_{\scriptscriptstyle \ref{l:isopaj}} = \gamma_{\scriptscriptstyle \ref{l:isopaj}}(\epsilon, j)>0$ and 
$\rho_{\scriptscriptstyle \ref{l:isopaj}} = \rho_{\scriptscriptstyle \ref{l:isopaj}}(\epsilon,j)>0$ such that 
if $\frac{r_0}{l_0}<\rho_{\scriptscriptstyle \ref{l:isopaj}}$ then 
for any subset $\mathbf a'$ of $\mathbf g'$ with $|\mathbf a'|\in[\epsilon (\frac{3L_s}{L_0})^j, (1-\epsilon)(\frac{3L_s}{L_0})^j]$,
we have $|\partial_{\mathbf g'} \mathbf a'| \geq \gamma_{\scriptscriptstyle \ref{l:isopaj}}(\epsilon,j) (\frac{L_s}{L_0})^{j-1}$.
\end{lemma}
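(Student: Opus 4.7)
The plan is to prove Lemma~\ref{l:isopaj} by induction on $j$, with $j=2$ as the base case and the principal source of difficulty. The $j=2$ case exploits the specific multi-scale construction of $\mathbf G$ (Remark~\ref{rem:Gexception}), while the inductive step for $j\geq 3$ is a dimension-reduction argument that reduces everything back to two-dimensional slices.

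For the base case $j=2$, I would work in the two-dimensional ``slab'' $\mathbf g'$ of good $L_0$-boxes sitting inside a $(3L_s/L_0)\times (3L_s/L_0)$ subcube of $\GG_0$. The argument proceeds in three steps. First, a standard reduction: using Proposition~\ref{prop:Gproperties}(c) (connectedness of two-dimensional slices of $\mathbf G$), I absorb small connected components of $\mathbf a'$ into its complement and vice versa, reducing, at the cost of adjusting $\epsilon$, to the case that $\mathbf a'$ is connected in $\mathbf g'$ and every connected component of $\mathbf g'\setminus\mathbf a'$ has volume comparable to $(L_s/L_0)^2$; this is what the sketch refers to as \eqref{eq:isopaconnected}. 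Second, I view $\mathbf a'$ as a subset of the ambient $\Z^2$-lattice $\GG_0\cap(y+\Z e_1+\Z e_2)$ and let $\Delta$ denote its edge boundary there. Because both $\mathbf a'$ and its complement (inside the slab) contain connected sets of linear size $L_s/L_0$, the planar isoperimetric inequality yields $|\Delta|\gtrsim L_s/L_0$, and moreover $\Delta$ consists of $*$-connected circuits, each of length $\gtrsim L_s/L_0$. Third, and most importantly, I must argue that for each such $*$-connected piece $\Delta_0$, at least a constant fraction of the edges of $\Delta_0$ actually lie in $\partial_{\mathbf g'}\mathbf a'$; equivalently, only a small fraction of edges of $\Delta_0$ can have an endpoint inside one of the ``deleted'' boxes removed in the construction of $\mathbf G$. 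This is what \eqref{eq:relationboundaries} records. Here I exploit the explicit hierarchical structure of $\mathbf G$: the deleted regions form a geometric sum, where at scale $n$ at most three boxes of side $r_{n-1}L_{n-1}$ are removed from each $L_n$-box. Bounding the total length of the $*$-connected trace of $\Delta_0$ inside deleted regions scale by scale, and using that $r_i/l_i$ is small (so that $f_2(r_0/l_0)$ is close to one, cf.\ \eqref{eq:fj:isopaj}), yields the claimed bound after a small additional shrinking of $r_0/l_0$. The condition $L_r^2\leq L_0\cdot L_s$ from \eqref{eq:scalesrs} is what allows this scale-by-scale control to close, since all scales $L_0,\ldots,L_r$ contribute and the sum must remain smaller than a constant fraction of $|\Delta_0|$.

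For the inductive step $j\geq 3$, assume the lemma is proved for all dimensions strictly less than $j$. Given $\mathbf a'\subset\mathbf g'$ with $|\mathbf a'|\in [\epsilon(3L_s/L_0)^j,(1-\epsilon)(3L_s/L_0)^j]$, partition the $j$-dimensional cube into $3L_s/L_0$ parallel $(j-1)$-dimensional slices orthogonal to $e_j$, and let $\rho_k$ be the density of $\mathbf a'$ in slice $k$. Fix auxiliary parameters $\delta=\delta(\epsilon,j)>0$ and $\epsilon'=\epsilon'(\epsilon,j)>0$. If at least a $\delta$-fraction of slices satisfy $\rho_k\in[\epsilon',1-\epsilon']$, apply the inductive hypothesis in each such slice to get a $(j-1)$-dimensional boundary of size $\gtrsim (L_s/L_0)^{j-2}$; summing over the $\gtrsim \delta\cdot L_s/L_0$ slices gives the desired $(L_s/L_0)^{j-1}$. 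Otherwise, by the global density constraint and pigeonhole one obtains two disjoint collections $\mathcal O$ (``overcrowded'', $\rho_k\geq 1-\epsilon'$) and $\mathcal U$ (``undercrowded'', $\rho_k\leq \epsilon'$), each of cardinality linear in $L_s/L_0$. As sketched in Figure~\ref{fig:slices}, I then consider two-dimensional slices spanned by $e_j$ and one of $e_1,\ldots,e_{j-1}$; most such two-dimensional slices intersect both $\mathcal O$-slices and $\mathcal U$-slices in a positive fraction, so the induced densities there lie in $[\epsilon/2,1-\epsilon/2]$ (after choosing $\epsilon'$ small). Applying the base case $j=2$ on each such two-dimensional slice and summing yields the required boundary bound.

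The main obstacle is the $j=2$ case, specifically the relation between $\partial_{\mathbf g'}\mathbf a'$ and the ambient $\Z^2$-boundary of $\mathbf a'$. Proposition~\ref{prop:Gproperties} alone is not enough: it gives connectedness and density of $\mathbf g'$ but no quantitative control over the shape of the complement $(\GG_0\cap\text{slice})\setminus \mathbf g'$. To handle this I must open up the construction of $\mathbf G$ and use that, at each renormalization scale $n\leq r$, the deleted regions consist of at most three boxes of side length $r_{n-1}L_{n-1}$ per $L_n$-box, arranged in a very controlled way. This lets me show that for every $*$-connected piece of the planar boundary, the portion lying on the boundary of a deleted box is small enough to be absorbed into a constant-factor loss, provided $r_0/l_0$ is sufficiently small. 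Every other step of the proof is relatively soft: the reduction to connected sets is standard, the planar isoperimetric inequality is classical, and the dimension-reduction case-split for $j\geq 3$ is a routine application of pigeonhole plus the base case.
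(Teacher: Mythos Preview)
Your proposal follows the paper's approach essentially line for line: induction on $j$, with the $j=2$ case handled by reducing to connected sets, passing to the ambient $\GG_0$-boundary, and then showing via the explicit multi-scale deletion structure that only a small fraction of each $*$-connected boundary piece can lie on deleted boxes; and for $j\geq 3$ the overcrowded/undercrowded dichotomy followed by reduction to two-dimensional slices.

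Two quantitative points in the $j=2$ sketch need correction. First, the standard reduction (as in \eqref{eq:isopaconnected}) only gives connected components of the complement of size $\geq 3L_s/L_0$, not $\asymp(L_s/L_0)^2$; correspondingly each $*$-connected boundary piece $\Delta_i$ satisfies only $|\Delta_i|\gtrsim (L_s/L_0)^{1/2}$, not $\gtrsim L_s/L_0$. The precise role of \eqref{eq:scalesrs} is to convert this into $|\Delta_i|\gtrsim L_r/L_0$, which is exactly what is needed in the scale-by-scale estimate: when $\Delta_i$ touches only $O(1)$ deleted boxes at some scale $k\leq r-1$, their total contribution is $O(r_kL_k/L_0)=O((r_k/l_k)L_{k+1}/L_0)\leq O((r_k/l_k)L_r/L_0)$, and you need $|\Delta_i|\gtrsim L_r/L_0$ to absorb it. Second, the paper inserts a ``fill in holes'' step (passing from $\mathbf a''$ to $\overline{\mathbf a}$ by adjoining components of the ambient complement that miss $\mathbf g'\setminus\mathbf a''$) before invoking $*$-connectedness of boundary components; you should make this explicit, since $*$-connectedness is a property of boundaries in $\GG_0$, not in $\mathbf g'$.
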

Note that Lemma~\ref{l:isopa} is a special case of Lemma~\ref{l:isopaj} corresponding to the choice of $j=d$ and $\epsilon = \frac{1}{2\cdot 3^{d}}$. 
In particular, Lemma~\ref{l:isopaj} implies Lemma~\ref{l:isopa} with the choice of 
$\rho_{\scriptscriptstyle \ref{l:isopa}} = \rho_{\scriptscriptstyle \ref{l:isopaj}}(\frac{1}{2\cdot 3^d},d)$ and 
$\gamma_{\scriptscriptstyle \ref{l:isopa}} = \gamma_{\scriptscriptstyle \ref{l:isopaj}}(\frac{1}{2\cdot 3^d},d)$. 
Thus it only remains to prove Lemma~\ref{l:isopaj}.
We first prove Lemma~\ref{l:isopaj} in the case $j=2$, and then use induction on $j$ to prove Lemma~\ref{l:isopaj} in the case $j\geq 3$.  

\medskip

\begin{proof}[Proof of Lemma~\ref{l:isopaj} ($j=2$)]
Fix any pair of orthogonal $e_1,e_2\in\Z^d$ with $|e_i|_1 = 1$, $y\in\GG_0\cap(x+[-L_s,2L_s)^d)$, 
and let 
\[
Q = \GG_0\cap(x+[-L_s,2L_s)^d)\cap(y + \Z\cdot e_1 + \Z\cdot e_2) .\
\] 
Denote by $\mathbf g'$ the restriction of $\mathbf g$ to $Q$. 
By Proposition~\ref{prop:Gproperties}(c), $\mathbf g'$ is connected and 
$|\mathbf g'|> f_2(\frac{r_0}{l_0})\cdot(\frac{3L_s}{L_0})^2$. 
We choose $\rho_1 = \rho_1(\epsilon)>0$ so that if $\frac{r_0}{l_0}<\rho_1$ then 
\begin{equation}\label{eq:fj:isopaj}
f_2\left(\frac{r_0}{l_0}\right)> 1 - \frac{\epsilon}{4},
\end{equation}
which implies that $|\mathbf g'| \geq (1 - \frac{\epsilon}{4})\cdot(\frac{3L_s}{L_0})^2$. 

Next, we claim that it suffices to show that there exists $c=c(\epsilon)>0$ and $\rho_2>0$ such that if $\frac{r_0}{l_0}<\rho_2$, then   
\begin{equation}\label{eq:isopaconnected}
\begin{array}{c}
\text{for any {\it connected} subset $\mathbf a''$ of $\mathbf g'$ satisfying $|\mathbf a''|\in[(\frac{3L_s}{L_0}), (1-\frac{3\epsilon}{8})(\frac{3L_s}{L_0})^2]$}\\
\text{and such that each connected component of $\mathbf g'\setminus \mathbf a''$ has size $\geq (\frac{3L_s}{L_0})$,}\\
\text{we have $|\partial_{\mathbf g'} \mathbf a''| \geq c(\epsilon)\cdot |\mathbf a''|^{1/2}$.}
\end{array}
\end{equation}
Indeed, assume that $\mathbf a' = \mathbf a_1'\cup\mathbf a_2'$, where $\mathbf a_1'$ is the subset of $\mathbf a'$ 
consisting of connected components of $\mathbf a'$ of size $\geq \frac{3L_s}{L_0}$, and $\mathbf a_2'$ is the rest of $\mathbf a'$. 
Let $N$ be the number of connected components in $\mathbf a_2'$. 
If $|\mathbf a_1'| \leq |\mathbf a_2'| (\leq \frac{3L_s}{L_0}\cdot N)$, then $N\geq \frac 12\cdot \frac{L_0}{3L_s}\cdot |\mathbf a'|\geq \frac{\epsilon}{2}\cdot (\frac{3L_s}{L_0})$, and 
\[
\frac{|\partial_{\mathbf g'} \mathbf a'|}{|\mathbf a'|^{1/2}}
\geq 
\frac{N}{(\frac{6L_s}{L_0}\cdot N)^{1/2}} \geq \frac 12\cdot \epsilon^{1/2} .\
\]
On the other hand, if $|\mathbf a_1'| > |\mathbf a_2'|$, then
\[
\frac{|\partial_{\mathbf g'} \mathbf a'|}{|\mathbf a'|^{1/2}} 
\geq 
\frac{1}{\sqrt{2}}\cdot \frac{|\partial_{\mathbf g'} \mathbf a_1'|}{|\mathbf a_1'|^{1/2}} .\
\]
The same reasoning applied to $\mathbf g'\setminus\mathbf a_1'$ implies that 
we may assume that the total volume of connected components of $\mathbf g'\setminus \mathbf a_1'$ with size $< (\frac{3L_s}{L_0})$ 
is at most $\frac 12 |\mathbf g'\setminus\mathbf a_1'|$. 
By merging all these small connected components of $\mathbf g'\setminus \mathbf a_1'$ into $\mathbf a_1'$, 
we obtain the set $\mathbf a'''$ such that $|\partial_{\mathbf g'} \mathbf a'''| \leq |\partial_{\mathbf g'} \mathbf a_1'|$, 
all connected components of $\mathbf a'''$ 
and $\mathbf g'\setminus \mathbf a'''$ have size $\geq \frac{3L_s}{L_0}$, 
and $|\mathbf g'\setminus \mathbf a'''| \geq \frac 12 |\mathbf g'\setminus\mathbf a_1'| \geq \frac 12 |\mathbf g'\setminus\mathbf a'| \geq \frac{3\epsilon}{8}\cdot(\frac{3L_s}{L_0})^2$.
Moreover, using the same ideas as, e.g., in \cite[Section~3.1]{MathieuRemy}, we get that for some $c>0$
\[
\frac{|\partial_{\mathbf g'} \mathbf a_1'|}{|\mathbf a_1'|^{1/2}}
\geq
\frac{|\partial_{\mathbf g'} \mathbf a'''|}{|\mathbf a'''|^{1/2}}
\geq c\cdot \inf_{\mathbf a''}\frac{|\partial_{\mathbf g'} \mathbf a''|}{|\mathbf a''|^{1/2}} ,\
\]
where the infimum is over all {\it connected} subsets $\mathbf a''$ of $\mathbf g'$ with $|\mathbf a''|\in[(\frac{3L_s}{L_0}), (1-\frac{3\epsilon}{8})(\frac{3L_s}{L_0})^2]$
and such that each connected component of $\mathbf g'\setminus \mathbf a''$ has size $\geq \frac{3L_s}{L_0}$.
Thus, if \eqref{eq:isopaconnected} holds, then Lemma~\ref{l:isopaj} follows in the case $j=2$ with the choice of 
\[
\rho_{\scriptscriptstyle \ref{l:isopaj}}(\epsilon,2) = \min(\rho_1(\epsilon),\rho_2) .\
\]

We proceed with the proof of \eqref{eq:isopaconnected}. 
Here we will need the full strength of property (a) in the definition of the event $\mathcal H$ (see Remark~\ref{rem:sgood}). 
We will also use the definition of sets $(\mathcal G_i)_{0\leq i\leq r}$ from the construction of $\mathbf G$ (see Remark~\ref{rem:Gexception}).  

Recall that $r = \lfloor \frac{s}{2} \rfloor$. It follows from \eqref{eq:scalesrs} that  
\begin{equation}\label{eq:sr}
\frac{3L_s}{L_0} \geq 3 \left(\frac{L_r}{L_0}\right)^2 .\
\end{equation}
Let $\mathbf b''$ be the connected components (in $\GG_0$) of $Q\setminus \mathbf a''$ which do not intersect $\mathbf g'\setminus \mathbf a''$, 
and let $\overline {\mathbf a} = \mathbf a''\cup\mathbf b''$.
(In other words, $\overline {\mathbf a}$ is obtained from $\mathbf a''$ by ``filling in holes'' in $\mathbf a''$, see Figure~\ref{fig:bara}.)

\begin{figure}
\centering
\includegraphics[width=0.4\textwidth]{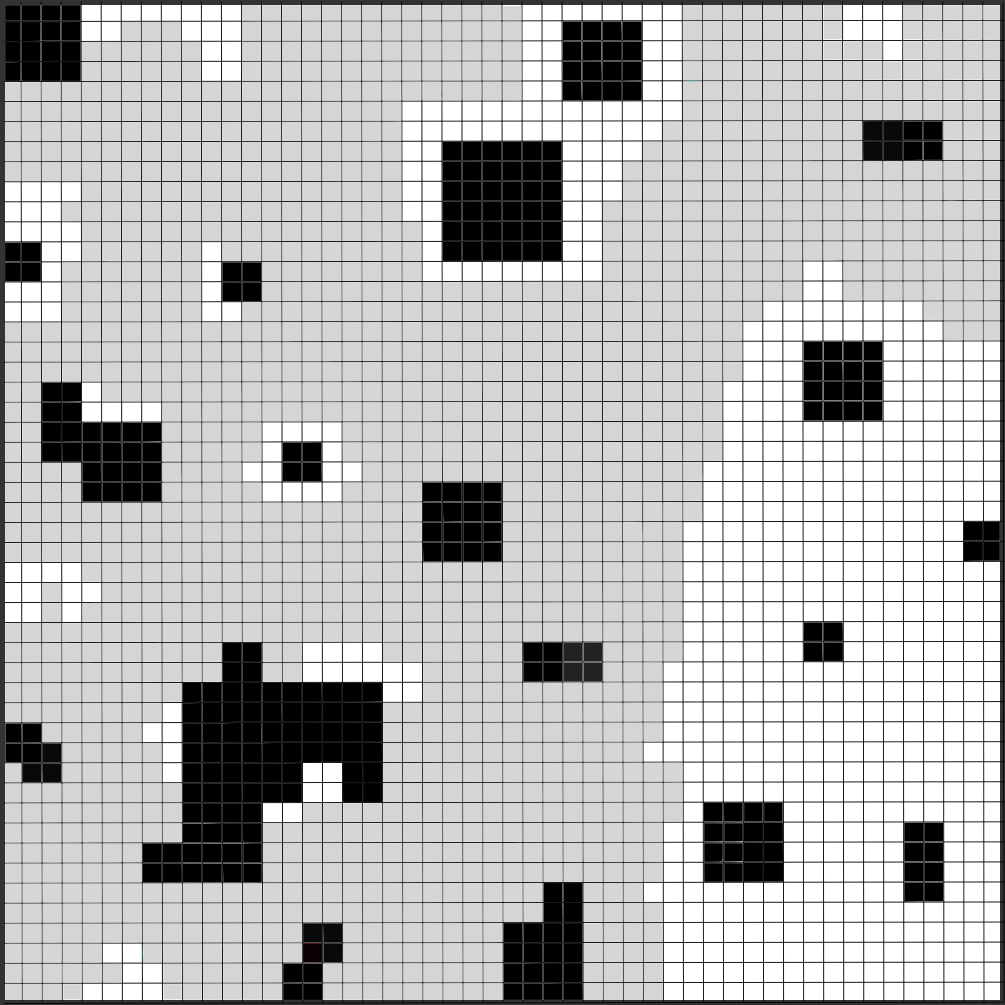}
\hspace{2cm}
\includegraphics[width=0.4\textwidth]{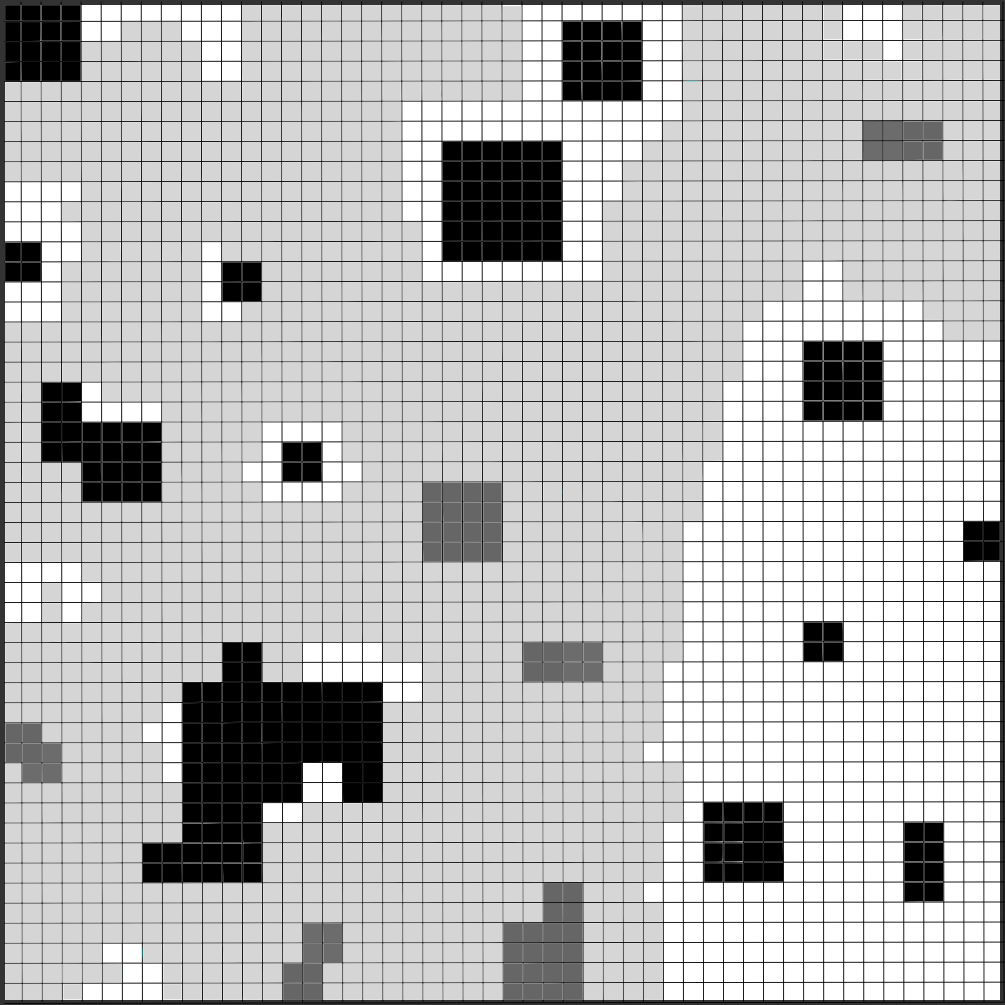}
\caption{This is an illustration of the set $\overline {\mathbf a}$. The black region on the left picture corresponds to $Q\setminus\mathbf g'$, 
but the black boxes are not drawn to the actual scale. The light grey region corresponds to $\mathbf a''$, the white to $\mathbf g'\setminus\mathbf a''$, and 
the grey on the right picture to $\mathbf b''$. Thus, the union of light grey and grey regions corresponds to $\overline {\mathbf a}$. 
Note that a black box turns grey only if it does not have a white neighbor.}\label{fig:bara}
\end{figure}

Note that $\overline {\mathbf a}$ is connected, each connected component of $Q\setminus \overline{\mathbf a}$ has size $\geq \frac{3L_s}{L_0}$, 
$|\overline{\mathbf a}| \geq |\mathbf a''| \geq \frac{3L_s}{L_0}$, and 
$|Q\setminus\overline{\mathbf a}| \geq |Q| - |\mathbf a''| - |Q\setminus \mathbf g'| \geq \frac{\epsilon}{8}\cdot (\frac{3L_s}{L_0})^2$. 

Let $(\overline{\mathbf b}_i)_{i\geq 1}$ be connected components of $Q\setminus\overline{\mathbf a}$. 
Let $\Delta_i$ be the set of edges $\{x,y\}$ with $x\in\overline{\mathbf a}$ and $y\in\overline{\mathbf b}_i$ 
(note that necessarily $x\in\mathbf a''$ by the definition of $\overline{\mathbf a}$ and $\overline{\mathbf b}_i$), 
and denote by $\delta_i$ the set of edges $\{x,y\}\in\Delta_i$ such that $x\in\overline{\mathbf a}$ and $y\in \mathbf g'\setminus\overline{\mathbf a}$. 
Note that 
\[
|\partial_{\mathbf g'}\mathbf a''| = \sum_{i\geq 1}|\delta_i| \qquad\mbox{and}\qquad
|\partial_Q\overline{\mathbf a}| = \sum_{i\geq 1}|\Delta_i| .\
\]
We will show that there exists $C<\infty$ such that for all $i\geq 1$, 
\begin{equation}\label{eq:relationboundaries}
|\delta_i| \geq |\Delta_i|\cdot\left(1 - C\sum_{k=0}^{r-1}\frac{r_k}{l_k}\right) .\ 
\end{equation}
Once \eqref{eq:relationboundaries} is proved, we choose $\rho_2>0$ so that for $\frac{r_0}{l_0}<\rho_2$, 
\begin{equation}\label{eq:r_0l_0:isopaj2}
1 - C\sum_{k=0}^{r-1}\frac{r_k}{l_k}>\frac 12 .\
\end{equation}
Then using the fact that $|\overline {\mathbf a}|\in[(\frac{3L_s}{L_0}),(1-\frac{\epsilon}{8})(\frac{3L_s}{L_0})^2]$, 
we apply the isoperimetric inequality for $\overline{\mathbf a}$ in $Q$ (see \cite[Proposition~2.2]{DeuschelPisztora}) and get
\begin{multline}\label{eq:isopbox}
|\mathbf a''| \leq |\overline{\mathbf a}| \leq C(\epsilon)\cdot\sum_{i\geq 1}|\Delta_i|^2 
\leq C(\epsilon)\cdot \left(\sum_{i\geq 1}|\Delta_i|\right)^2
\leq 4\cdot C(\epsilon)\cdot \left(\sum_{i\geq 1}|\delta_i|\right)^2\\
= 4\cdot C(\epsilon)\cdot |\partial_{\mathbf g'}\mathbf a''|^2 ,\
\end{multline}
and \eqref{eq:isopaconnected} follows. 
Before we prove \eqref{eq:relationboundaries}, we show that there exists $c>0$ such that for each $i\geq 1$, 
\begin{equation}\label{eq:lbDelta}
|\Delta_i| \geq c\cdot \frac{L_r}{L_0} .\
\end{equation}
Indeed, if $|\overline{\mathbf b}_i|<\frac 12(\frac{3L_s}{L_0})^2$, then 
by the isoperimetric inequality in $Q$ (see \cite[Proposition~2.2]{DeuschelPisztora}), 
$|\Delta_i| \geq c|\overline{\mathbf b}_i|^{1/2} \geq c(\frac{3L_s}{L_0})^{1/2}$. 
On the other hand, if $|\overline{\mathbf b}_i|\geq \frac 12(\frac{3L_s}{L_0})^2$, 
then $Q\setminus \overline{\mathbf b}_i$ is connected (since $\overline{\mathbf a}$ is connected), 
and $|Q\setminus \overline{\mathbf b}_i|\in [(\frac{3L_s}{L_0}), \frac 12(\frac{3L_s}{L_0})^2]$. 
Thus, again by the isoperimetric inequality in $Q$ (see \cite[Proposition~2.2]{DeuschelPisztora}), 
$|\Delta_i| \geq c|Q\setminus \overline{\mathbf b}_i|^{1/2} \geq c(\frac{3L_s}{L_0})^{1/2}$.
Using \eqref{eq:sr}, we get \eqref{eq:lbDelta}.

\medskip

We now prove \eqref{eq:relationboundaries}. 
For this we recall the construction of $\mathbf G$, namely the definition of $\mathcal G_k$. 
In particular, note that by part (a) of the definition of $\mathcal H$, $\mathcal G_r \cap \ballZ(0,R+L_r) = \GG_r\cap\ballZ(0,R+L_r)$, 
and for $0\leq k\leq r-1$, $\mathcal G_k$ is obtained by deleting at most $3$ boxes of side length $r_kL_k$ 
from each of the boxes $(z + [0,L_{k+1})^2)$, $z\in \mathcal G_{k+1}$.  
A useful implication of this construction is that 
for each such deleted box of side length $r_kL_k$, 
there exist at most $26 (= 3\cdot 3^2 - 1)$ other deleted boxes of side length $r_kL_k$ 
which are within $\ell^\infty$-distance $L_{k+1}$ from the specified box. 

Fix $i\geq 1$. 
We write the set of ``bad'' edges $\Delta_i\setminus\delta_i$ as the union $\cup_{k=0}^{r-1}E_k$, 
where $E_k$ consists of edges $\{z,z'\}$ in $\GG_0$ such that $z\in\overline{\mathbf a}$ and 
$z'\in \overline{\mathbf b}_i \cap((\mathcal G_{k+1} + [0,L_{k+1})^2)\setminus (\mathcal G_k + [0,L_k)^2))$. 
This is the part of $\Delta_i$ which ``touches'' the boxes of side length $r_kL_k$ deleted from $\ballZ(0,2R)$ (more specifically, from $(\mathcal G_{k+1} + [0,L_{k+1})^2)$)
in the definition of $\mathbf G$, i.e., when defining $\mathcal G_k$. 
Let $N_k$ be the total number of such ``touched'' boxes of side length $r_kL_k$. 
Since each of these boxes has boundary $\leq 4r_k\frac{L_k}{L_0}$, it follows that $|E_k| \leq N_k\cdot16 r_k\frac{L_k}{L_0}$. 
Consider separately the cases $N_k> 27$ and $N_k\leq 27$. 
If $N_k\leq 27$, then 
\[
|E_k| \leq N_k\cdot 16 r_k\frac{L_k}{L_0}\leq 16\cdot 27\cdot \frac{r_k}{l_k}\cdot \frac{L_r}{L_0}
\leq 
C\cdot\frac{r_k}{l_k}\cdot |\Delta_i| ,\
\]
where the last inequality follows from \eqref{eq:lbDelta}. 
Assume now that $N_k>27$. 
From all these boxes we can choose $\geq \lceil \frac{N_k}{27}\rceil (\geq 2)$ boxes so that each pair of them is at $\ell^\infty$-distance $\geq L_{k+1}$ from each other.
By \cite[Lemma~2.1(ii)]{DeuschelPisztora}, the set $\{x\in\overline{\mathbf a}~:~\{x,y\}\in\Delta_i\mbox{ for some }y\}$ is $*$-connected. 
Thus, we can choose disjoint simple $*$-paths in $\{x\in\overline{\mathbf a}~:~\{x,y\}\in\Delta_i\mbox{ for some }y\}$ of $\frac{L_{k+1}}{3L_0}$ vertices each, originating near each of such boxes. 
(See Figure~\ref{fig:Deltas}.)
\begin{figure}[t]
\centering
\includegraphics[width=0.7\textwidth]{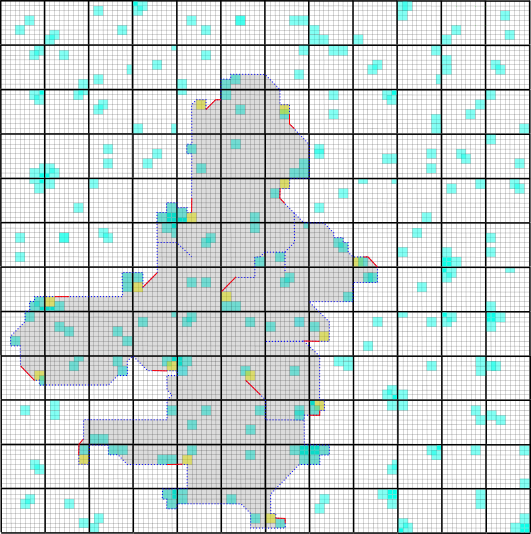}
\caption{The shaded region is $\overline{\mathbf b}_i$. Its boundary is the $*$-connected set $\{x\in\overline{\mathbf a}~:~\{x,y\}\in\Delta_i\mbox{ for some }y\}$.
The colored boxes correspond to $(\mathcal G_{k+1} + [0,L_{k+1})^2)\setminus (\mathcal G_k + [0,L_k)^2)$. 
The total number of boxes at pairwise distance $\geq L_{k+1}$ from each other is $\geq \lceil \frac{N_k}{27}\rceil (\geq 2)$. 
Each piece of the boundary of $\overline{\mathbf b}_i$ (illustrated with solid lines) touching one of the well-separated boxes consist of $\frac{L_{k+1}}{3L_0}$ vertices. 
Since these paths are disjoint, the total number of vertices in these paths is $\geq \lceil \frac{N_k}{27}\rceil\cdot \frac{L_{k+1}}{3L_0}$, which implies \eqref{eq:DeltaiNk}.
}\label{fig:Deltas}
\end{figure}
Therefore,  
\begin{equation}\label{eq:DeltaiNk}
|\Delta_i| \geq |\{x\in\overline{\mathbf a}~:~\{x,y\}\in\Delta_i\mbox{ for some }y\}|
\geq \frac 13\cdot\frac{L_{k+1}}{L_0}\cdot \frac{N_k}{27} ,\
\end{equation}
and we conclude that 
\[
|E_k| \leq N_k\cdot16 r_k\frac{L_k}{L_0}\leq 16\cdot 81\cdot \frac{r_k}{l_k}\cdot |\Delta_i| .\
\]
Combining the bounds of $|E_k|$ for all $k$ gives  
\[
|\delta_i| = |\Delta_i| - |\Delta_i\setminus\delta_i|
\geq |\Delta_i| - \sum_{k=0}^{r-1}|E_k|
\geq |\Delta_i|\cdot\left(1 - C\cdot\sum_{k=0}^{r-1}\frac{r_k}{l_k}\right) .\
\]
This is precisely \eqref{eq:relationboundaries}. Thus, the proof of Lemma~\ref{l:isopaj} is complete in the case $j=2$. 
\end{proof}

\bigskip
\bigskip

We proceed with the proof of Lemma~\ref{l:isopaj} in the case $j\geq 3$. 
\begin{proof}[Proof of Lemma~\ref{l:isopaj} ($j\geq 3$)]
The proof is by induction on $j$ and using the result of Lemma~\ref{l:isopaj} for $j=2$ proved before. 
Given $j\geq 3$, we assume that the statement of Lemma~\ref{l:isopaj} holds for all $j'<j$ and prove that it also holds for $j$.

Fix $x\in\GG_s\cap\ballZ(0,2R-3L_s)$, $y\in\GG_0\cap(x+[-L_s,2L_s)^d)$, $2\leq j\leq d$, and pairwise orthogonal $e_1,\dots,e_j\in\Z^d$ with $|e_i|_1=1$, 
and let $\mathbf g'$ be the restriction of $\mathbf g$ to $(y + \sum_{i=1}^j\Z\cdot e_i)$. 
Fix $\epsilon>0$ and a subset $\mathbf a'$ of $\mathbf g'$ with $|\mathbf a'|\in[\epsilon (\frac{3L_s}{L_0})^j, (1-\epsilon)(\frac{3L_s}{L_0})^j]$.

By Proposition~\ref{prop:Gproperties}(c), $\mathbf g'$ is connected and 
$|\mathbf g'| \geq f_j(\frac{r_0}{l_0})\cdot(\frac{3L_s}{L_0})^j$, where $f_j$ is defined in \eqref{eq:fj}.
There exists $\rho_3 = \rho_3(\epsilon,j)>0$ so that if $\frac{r_0}{l_0}<\rho_3$, then 
\begin{equation}\label{eq:r_0l_0isopaj3}
f_j\left(\frac{r_0}{l_0}\right)\geq f_{j-1}\left(\frac{r_0}{l_0}\right) > 1 - \frac{\epsilon}{2} ,\
\end{equation}
which implies that $|\mathbf g'| \geq (1 - \frac{\epsilon}{2})\cdot(\frac{3L_s}{L_0})^j$. 
(The inequality for $f_{j-1}(\frac{r_0}{l_0})$ is used later in the proof.)

\begin{figure}
\centering
\includegraphics[width=0.9\textwidth]{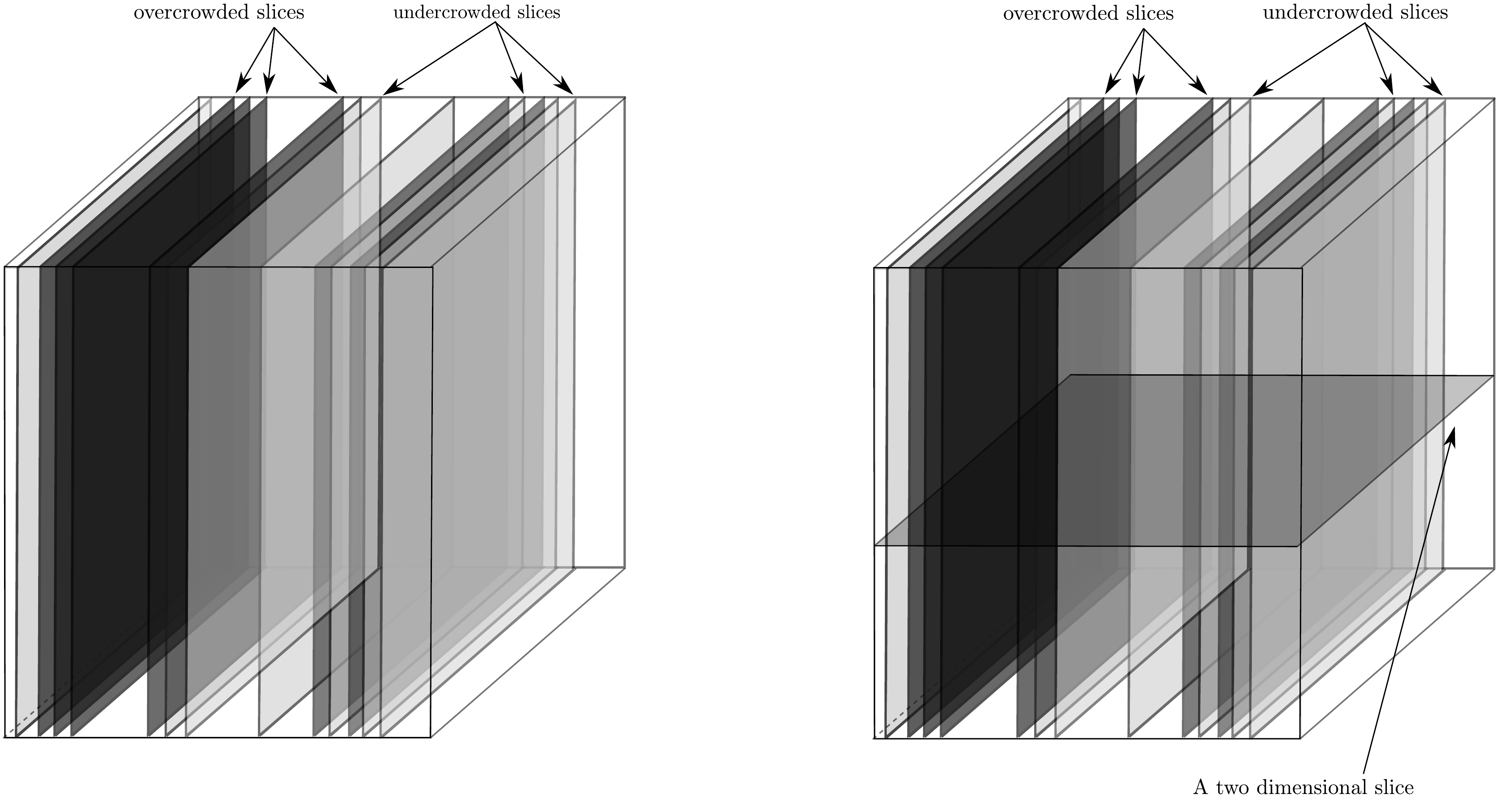}
\caption{An illustration of the overcrowded and undercrowded slices $M_k$. Every two-dimensional slice $N_t$ 
intersects each of the overcrowded and undercrowded slices in $\frac{3L_s}{L_0}$ vertices, 
so it intersects the union of all the overcrowded slices and also the union of all the undercrowded slices 
in $\frac{\epsilon}{8}\cdot(\frac{3L_s}{L_0})^2$ vertices.
}\label{fig:slices}
\end{figure}

Consider the $(j-1)$-dimensional slices $M_k = k\cdot L_0\cdot e_1 + (y + \sum_{i=2}^j\Z\cdot e_i)\cap(x + [-L_s,2L_s)^d)\cap\GG_0$, $k\in\Z$. 
Since $|\mathbf a'| \geq \epsilon (\frac{3L_s}{L_0})^j$, 
there exist at least $\frac{\epsilon}{2}\cdot(\frac{3L_s}{L_0})$ slices $M_k$ containing 
$\geq \frac{\epsilon}{2}\cdot(\frac{3L_s}{L_0})^{j-1}$ vertices from $\mathbf a'$. 
Since $|\mathbf g'\setminus \mathbf a'| \geq \frac{\epsilon}{2}\cdot (\frac{3L_s}{L_0})^j$,  
there exists at least $\frac{\epsilon}{4}\cdot(\frac{3L_s}{L_0})$ slices $M_k$ containing 
$\geq \frac{\epsilon}{4}\cdot(\frac{3L_s}{L_0})^{j-1}$ vertices from $\mathbf g'\setminus \mathbf a'$.

If there exists at least $\frac{\epsilon}{8}\cdot(\frac{3L_s}{L_0})$ slices $M_k$ containing 
$\geq \frac{\epsilon}{8}\cdot(\frac{3L_s}{L_0})^{j-1}$ vertices from {\it each} of the sets $\mathbf a'$ and $\mathbf g'\setminus \mathbf a'$, 
then the restriction of $\mathbf g'$ to any such slice satisfies the induction hypothesis. 
Therefore, by applying Lemma~\ref{l:isopaj} to the restriction of $\mathbf g'$ in each of these slices, we conclude that 
\[
|\partial_{\mathbf g'} \mathbf a'| \geq 
\gamma_{\scriptscriptstyle \ref{l:isopaj}}\left(\frac{\epsilon}{8},j-1\right)\cdot \left(\frac{3L_s}{L_0}\right)^{j-2}\cdot \frac{\epsilon}{8}\cdot\left(\frac{3L_s}{L_0}\right) 
= \gamma_{\scriptscriptstyle \ref{l:isopaj}}\left(\frac{\epsilon}{8},j-1\right)\cdot\frac{\epsilon}{8}\cdot\left(\frac{3L_s}{L_0}\right)^{j-1} .\
\]

If there exists $<\frac{\epsilon}{8}\cdot(\frac{3L_s}{L_0})$ slices $M_k$ containing 
$\geq \frac{\epsilon}{8}\cdot(\frac{3L_s}{L_0})^{j-1}$ vertices from each of the sets $\mathbf a'$ and $\mathbf g'\setminus \mathbf a'$, 
then by earlier conclusion, there exist at least $\frac{\epsilon}{8}\cdot(\frac{3L_s}{L_0})$ slices $M_k$ containing 
$<\frac{\epsilon}{8}\cdot(\frac{3L_s}{L_0})^{j-1}$ vertices from $\mathbf g'\setminus \mathbf a'$. 
By Proposition~\ref{prop:Gproperties}(c) and \eqref{eq:r_0l_0isopaj3}, each such slice contains at least 
$f_{j-1}(\frac{r_0}{l_0})\cdot(\frac{3L_s}{L_0})^{j-1}\geq (1 - \frac {\epsilon}{2})\cdot (\frac{3L_s}{L_0})^{j-1}$ vertices from $\mathbf g'$. 
Therefore, there exist at least $\frac{\epsilon}{8}\cdot(\frac{3L_s}{L_0})$ slices $M_k$ containing 
$\geq (1-\frac{5\epsilon}{8})\cdot(\frac{3L_s}{L_0})^{j-1}$ vertices from $\mathbf a'$. 
We choose $\frac{\epsilon}{8}\cdot(\frac{3L_s}{L_0})$ of them and call these slices {\it overcrowded}. 

Similarly one shows that there exist at least $\frac{\epsilon}{8}\cdot(\frac{3L_s}{L_0})$ slices $M_k$ containing 
$\geq (1-\frac{5\epsilon}{8})\cdot(\frac{3L_s}{L_0})^{j-1}$ vertices from $\mathbf g'\setminus \mathbf a'$. 
We choose $\frac{\epsilon}{8}\cdot(\frac{3L_s}{L_0})$ of them and call these slices {\it undercrowded}. 

Consider now the two-dimensional slices 
$N_t = \sum_{i=3}^jt_i\cdot L_0\cdot e_i + (y + \Z\cdot e_1 + \Z\cdot e_2)\cap(x + [-L_s,2L_s)^d)\cap\GG_0$, $t = (t_3,\dots,t_j)\in\Z^{j-2}$, see Figure~\ref{fig:slices}.
Note that every non-empty two-dimensional slice $N_t$ intersects 
the union of all the overcrowded $(j-1)$-dimensional slices and also the union of all the undercrowded $(j-1)$-dimensional slices 
in $\frac{\epsilon}{8}\cdot(\frac{3L_s}{L_0})^2$ vertices. 
Therefore, there exist at least $\frac 23\cdot(\frac{3L_s}{L_0})^{j-2}$ two-dimensional slices $N_t$ containing at least 
$\frac{\epsilon}{16}\cdot(\frac{3L_s}{L_0})^2$ vertices from $\mathbf a'$, and 
at least $\frac 23\cdot(\frac{3L_s}{L_0})^{j-2}$ slices $N_t$ containing at least 
$\frac{\epsilon}{16}\cdot(\frac{3L_s}{L_0})^2$ vertices from $\mathbf g'\setminus \mathbf a'$. 
Indeed, assume that there exist ${\mathrm N}< \frac 23\cdot(\frac{3L_s}{L_0})^{j-2}$ slices containing at least 
$\frac{\epsilon}{16}\cdot(\frac{3L_s}{L_0})^2$ vertices from $\mathbf a'$. 
The other case is considered similarly. 
Then the total number of vertices of $\mathbf a'$ in overcrowded $(j-1)$-dimensional slices is 
\begin{multline*}
\leq {\mathrm N}\cdot\left(\frac{\epsilon}{8}\cdot\left(\frac{3L_s}{L_0}\right)^{2}\right) 
+ \left(\left(\frac{3L_s}{L_0}\right)^{j-2} - {\mathrm N}\right)\cdot\left(\frac{\epsilon}{16}\cdot\left(\frac{3L_s}{L_0}\right)^{2}\right)\\
< \frac 53\cdot \frac{\epsilon}{16}\cdot\left(\frac{3L_s}{L_0}\right)^j
< \left(1-\frac{5\epsilon}{8}\right)\cdot \frac{\epsilon}{8}\cdot\left(\frac{3L_s}{L_0}\right)^j ,\
\end{multline*}
for $\epsilon<\frac{4}{15}$, which contradicts with the definition of overcrowded slices. 

Therefore, there exist at least $\frac 13\cdot(\frac{3L_s}{L_0})^{j-2}$ slices $N_t$ containing at least 
$\frac{\epsilon}{16}\cdot(\frac{3L_s}{L_0})^2$ vertices from $\mathbf a'$ and 
at least $\frac{\epsilon}{16}\cdot(\frac{3L_s}{L_0})^2$ vertices from $\mathbf g'\setminus \mathbf a'$. 
We can now apply Lemma~\ref{l:isopaj} to each of such slices to obtain that the boundary of $\mathbf a'$ 
in the restriction of $\mathbf g'$ to each of such slices is at least $\gamma_{\scriptscriptstyle \ref{l:isopaj}}(\frac{\epsilon}{16},2)\cdot \frac{3L_s}{L_0}$. 
Since the total number of slices is at least $\frac 13\cdot (\frac{3L_s}{L_0})^{j-2}$, 
we conclude that 
\[
|\partial_{\mathbf g'} \mathbf a'| \geq 
\frac 13\cdot \left(\frac{3L_s}{L_0}\right)^{j-2}\cdot \gamma_{\scriptscriptstyle \ref{l:isopaj}}\left(\frac{\epsilon}{16},2\right)\cdot \frac{3L_s}{L_0}
=
\frac 13\cdot \gamma_{\scriptscriptstyle \ref{l:isopaj}}\left(\frac{\epsilon}{16},2\right)\cdot \left(\frac{3L_s}{L_0}\right)^{j-1} .\
\]
Thus the result of Lemma~\ref{l:isopaj} for the given $j$ follows with the choice of 
\[
\gamma_{\scriptscriptstyle \ref{l:isopaj}}(\epsilon,j) = 
\min\left(\frac{\epsilon}{8}\cdot \gamma_{\scriptscriptstyle \ref{l:isopaj}}\left(\frac{\epsilon}{8},j-1\right), 
\frac 13\cdot \gamma_{\scriptscriptstyle \ref{l:isopaj}}\left(\frac{\epsilon}{16},2\right)\right)
\]
and
\[
\rho_{\scriptscriptstyle \ref{l:isopaj}}(\epsilon,j) = 
\min\left(\rho_3(\epsilon,j),\rho_{\scriptscriptstyle \ref{l:isopaj}}\left(\frac{\epsilon}{8},j-1\right),\rho_{\scriptscriptstyle \ref{l:isopaj}}\left(\frac{\epsilon}{16},2\right)\right) .\
\]
The proof of Lemma~\ref{l:isopaj} in the case $j\geq 3$ is complete. 
\end{proof}

\section{Quenched invariance principle}\label{sec:ipa}

In this section we state the quenched invariance principle for simple random walk on percolation clusters satisfying some general conditions. 
Later, in Section~\ref{sec:ip}, we show that these conditions are satisfied by any probability measure $\mathbb P^u$, for $u\in(a,b)$, 
given that the family $\{\mathbb P^u\}_{u\in(a,b)}$ satisfies the axioms \p{} -- \ppp{} and \s{} -- \sss{}.

Consider a probability measure $\mathbb P$ on the measurable space $(\Omega,\mathcal F)$, 
where $\Omega = \{0,1\}^{\Z^d}$, $d\geq 2$, and $\mathcal F$ is the sigma-algebra 
generated by the canonical coordinate maps $\{\omega\mapsto\omega(x)\}_{x\in\Z^d}$. 
For $x\in\Z^d$, denote by $\tau_x~:~\Omega\to\Omega$ the shift in direction $x$, i.e., 
$(\tau_x\omega)(y) = \omega(x+y)$. For each $\omega\in\Omega$, let 
\[
\set = \set(\omega) = \{x\in\Z^d~:~\omega(x) = 1\} .\
\]
We think about $\set$ as a subgraph of $\Z^d$ in which edges are added between any two vertices of $\set$ of $\ell^1$-distance $1$. 
As before, we denote by $\set_\infty$ the subset of vertices of $\set$ which belong to infinite connected components of $\set$. 
We assume that $\mathbb P$ satisfies the following axioms. 
\begin{itemize}
\item[\a{}]
For all $e\in\Z^d$ with $|e|_1 = 1$, the shift $\tau_e$ is measure preserving and ergodic on $(\Omega,\mathcal F, \mathbb P)$.  
\item[\aa{}] 
The subgraph $\set_\infty$ is non-empty and connected, $\mathbb P$-a.s. (In particular, $\mathbb P[0\in\set_\infty] > 0$.)
\item[\aaa{}]
There exist constants $c>0$, $C<\infty$, and $\Delta_3>0$ such that for all $R\geq 1$ and for all $e\in\Z^d$ with $|e|_1 = 1$, 
\[
\mathbb P\left[k\cdot e\in \set_\infty\mbox{ for some }0\leq k\leq R \right]\geq 1-C\cdot e^{-c(\log R)^{1+\Delta_3}} .\
\]
\end{itemize}
Our next axioms on $\mathbb P$ concern intrinsic geometry of $\set_\infty$. 
For $x,y\in\set$, let $\rho_\set(x,y)\in \mathbb N \cup \{\infty\}$ denote the distance between $x$ and $y$ in $\set$, i.e.,
\[
\rho_\set(x,y)=\inf\left\{ n\geq 0 ~:~
    \begin{array}{c}
        \text{there exist $x_0,\dots,x_n\in\set$ such that}\\
\text{$x_0=x$, $x_n=y$, and}\\
        \text{$|x_k-x_{k-1}|_1=1$ for all $k=1,\dots,n$}
    \end{array}
    \right\},
\]
where we use the convention $\inf\emptyset=\infty$, and let $\ballS(x,R) = \{y\in\set~:~\rho_\set(x,y)\leq R\}$. 
\begin{itemize}
\item[\aaaa{}] 
There exist constants $c>0$, $C<\infty$, and $\Delta_4>0$ such that for all $R\geq 1$, 
\[
\mathbb P\left[\text{for all }x,y\in \set_\infty\cap \ballZ(0,R), ~\rho_\set(x,y)\leq C\cdot R \right]\geq 1-C\cdot e^{-c(\log R)^{1+\Delta_4}} .\
\]
\item[\aaaaa{}]
$\mathbb P[\cdot~|~0\in\set_\infty]$-almost surely, 
\[
\inf_{k\geq 1}\inf\left\{
\frac{|\partial_\set A|}{|A|^{\frac{d-1}{d}}}~:~A\subset \set_\infty\cap\ballS(0,2k), |A|\geq k^{1/3}
\right\}>0 .\
\]
\end{itemize}

\bigskip

Next we describe the random walk on $\set$. 
For $\omega\in\Omega$ and $x\in\set$, let $\deg_\omega(x)=\left|\{y\in\set ~:~ |y-x|_1=1 \}\right|$ be the degree of $x$ in $\set$.
For a configuration $\omega\in\Omega$ and $x\in\set$, let $\mathbf P_{\omega,x}$ be 
the distribution of the random walk $\{X_n\}_{n \geq 0}$ on $\set$ defined by the transition kernel
\begin{equation}\label{eq:trans}
\mathbf P_{\omega,x}[X_{n+1} = z|X_{n}=y] =
\left\{
    \begin{array}{ll}
        \frac{1}{2d}&\quad |z-y|_1=1,~z\in\set;\\
        1-\frac{\deg_\omega(y)}{2d}&\quad z=y;\\
        0 & \quad \text{otherwise,}
    \end{array}
    \right.
\end{equation}
and initial position $\mathbf P_{\omega,x}[X_0=x]=1$. The corresponding expectation is denoted by $\mathbf E_{\omega,x}$. 

Let $\Omega_0=\left\{\omega\in\Omega ~:~ 0\in\set_\infty\right\}$, and 
define the measure $\mathbb P_0$ by $\mathbb P_0[A]=\mathbb P[A~|~\Omega_0]$. 
We denote by $\mathbb E_0$ the expectation with respect to $\mathbb P_0$. 

For $\omega\in\Omega_0$, $n\in \mathbb N$, and $t\geq 0$, let 
\[
\widetilde B_n(t) = \frac{1}{\sqrt n}\left(X_{\lfloor tn\rfloor} + \left(tn - \lfloor tn\rfloor\right)\cdot \left(X_{\lfloor tn\rfloor + 1} - X_{\lfloor tn\rfloor}\right)\right) ,\
\]
where $(X_k)_{k\geq 0}$ is the random walk on $\set$ (actually on $\set_\infty$) with distribution $\mathbf P_{\omega,0}$. 
Theorem~\ref{thm:ip} follows from Theorem~\ref{thm:ipa}, as we demonstrate in Section~\ref{sec:ip}. 
\begin{theorem}\label{thm:ipa}
Let $d\geq 2$, and assume that the measure $\mathbb P$ satisfies assumptions \a{} -- \aaaaa{}. 
Then for all $T>0$ and for $\mathbb P_0$-almost every $\omega$, the law of $(\widetilde B_n(t))_{0\leq t\leq T}$ on $(C[0,T],\mathcal W_T)$ converges weakly to the law of 
a Brownian motion with zero drift and non-degenerate covariance matrix. 
In addition, if reflections and rotations of $\Z^d$ by $\frac{\pi}{2}$ preserve $\mathbb P$, 
then the limiting Brownian motion is isotropic (with positive diffusion constant).
\end{theorem}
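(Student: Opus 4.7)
The plan is to follow the corrector method, as developed in \cite{BergerBiskup} and refined in \cite{BiskupPrescott}, with \a{}--\aa{} replacing the i.i.d.\ structure of Bernoulli percolation in the ergodic inputs, \aaa{}--\aaaa{} playing the role of chemical-distance and volume regularity on the cluster, and \aaaaa{} replacing the usual Barlow-type isoperimetric inequality on the infinite cluster. I would first introduce the environment seen from the walker, $\omega_n := \tau_{X_n}\omega$, which under $\mathbf P_{\omega,0}$ is a Markov chain on $\Omega_0$. The kernel in \eqref{eq:trans} is reversible with respect to counting measure on $\set$, so a short computation shows that $\mathbb P_0$ itself is invariant for $\omega_n$; ergodicity is then inherited from \a{} together with the $\mathbb P$-a.s.\ uniqueness of $\set_\infty$ from \aa{}. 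Next, I would construct the corrector $\chi:\Omega_0\times\Z^d\to\R^d$ via the Kipnis--Varadhan/Kozlov method applied to the zero-mean $L^2$ cocycle $V(\omega)=\sum_{e:|e|_1=1,\,\omega(e)=1}e$. This produces a measurable $\chi$, defined on $\set_\infty$, satisfying the cocycle identity $\chi(\tau_x\omega,y-x)=\chi(\omega,y)-\chi(\omega,x)$, and such that $\phi(\omega,x):=x+\chi(\omega,x)$ is harmonic for the kernel in \eqref{eq:trans}. Consequently $M_n:=\phi(\omega,X_n)$ is a square-integrable martingale under $\mathbf P_{\omega,0}$ with stationary ergodic increments under $\mathbb P_0$, and the Lindeberg martingale CLT---with quadratic variation identified via the Birkhoff ergodic theorem for $\omega_n$---yields quenched convergence of $t\mapsto M_{\lfloor tn\rfloor}/\sqrt n$ to Brownian motion with some covariance matrix $\Sigma$.

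The crucial remaining step is sublinearity of the corrector along the walk:
\begin{equation*}
\max_{0\leq k\leq n}\,|\chi(\omega,X_k)|\big/\sqrt n \;\longrightarrow\;0 \quad\text{in }\mathbf P_{\omega,0}\text{-probability, for }\mathbb P_0\text{-a.e.\ }\omega.
\end{equation*}
Once this is proved, writing $X_{\lfloor tn\rfloor}=M_{\lfloor tn\rfloor}-\chi(\omega,X_{\lfloor tn\rfloor})$, interpolating linearly, and applying Slutsky identifies the scaling limit of $\widetilde B_n$ with that of $M_{\lfloor tn\rfloor}/\sqrt n$. I would proceed in two stages. Stage one is volume-averaged sublinearity,
\begin{equation*}
\lim_{n\to\infty}\,n^{-d}\,\bigl|\{x\in\ballZ(0,n)\cap\set_\infty:|\chi(\omega,x)|>\varepsilon n\}\bigr|=0,\qquad\mathbb P_0\text{-a.s.,}
\end{equation*}
obtained from the cocycle identity, the spatial ergodic theorem on $\Z^d$, and \aaa{}, which produces $\set_\infty$-vertices on each coordinate axis at positive density and allows one to express $\chi(\omega,\cdot)$ along the axes as a Birkhoff sum. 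Stage two, where \aaa{}--\aaaaa{} all enter essentially, upgrades this to the pointwise maximal sublinearity above. Here I would follow \cite{BiskupPrescott}: combine \aaaaa{} with the Nash/Faber--Krahn inequality in the form of \cite[Proposition~2.11]{Barlow} to derive the on-diagonal heat-kernel upper bound $\sup_{x\in\set_\infty}\mathbf P_{\omega,0}[X_n=x]\leq K(\omega)\,n^{-d/2}$ for $n\geq n_0(\omega)$; use \aaaa{} together with this bound to confine the walk up to time $n$ to $\ballZ(0,C\sqrt{n\log n})$ with overwhelming probability; and conclude by Markov's inequality and a union bound over vertices in the confining box, as in \cite[Section~3]{BiskupPrescott}.

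Finally, non-degeneracy of $\Sigma$ is standard: if $v\cdot\Sigma v=0$ for some $v\in\R^d\setminus\{0\}$, then $v\cdot M_n\equiv 0$, whence $v\cdot X_n=-v\cdot\chi(\omega,X_n)$; forcing $X$ to reach the $\set_\infty$-vertices along the axis in direction $v$ supplied by \aaa{}, the right-hand side is sublinear while the left grows linearly, a contradiction. Under the lattice symmetries of $\Z^d$ by $\frac{\pi}{2}$ that preserve $\mathbb P$, uniqueness of the corrector forces $\Sigma$ to commute with the induced action on $\R^d$, so $\Sigma=\sigma^2 I$ with $\sigma^2>0$. I expect the sublinearity upgrade in stage two to be the only substantial obstacle: it is the unique step requiring Theorem~\ref{thm:isoperimetric} (through \aaaaa{}), being the source of the heat-kernel upper bound which is otherwise out of reach in this non-elliptic, long-range correlated setting; by contrast, the ergodic and martingale inputs from \a{}--\aa{} and the Kipnis--Varadhan construction are essentially off the shelf.
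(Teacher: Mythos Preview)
Your strategy matches the paper's: construct the corrector under \a{}--\aa{}, establish sublinearity on average via \aaa{}--\aaaa{}, upgrade to everywhere sublinearity using heat-kernel input from \aaaaa{}, and read off non-degeneracy from sublinearity. The paper phrases the key sublinearity as the spatial statement $\max_{x\in\set_\infty\cap\ballZ(0,k)}|\chi(x,\omega)|/k\to 0$ rather than your walk-indexed version, but the route is the same. Your description of stage two, however, contains two inaccuracies that would block the argument as written.

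First, \cite[Proposition~2.11]{Barlow} is not available here: it requires the isoperimetric profile with boundary taken \emph{in} $\mathcal C_R$, whereas \aaaaa{} (via Theorem~\ref{thm:isoperimetric}) only controls $\partial_\set A$. The paper explicitly flags (in the Remark following Theorem~\ref{thm:isoperimetric}) that the stronger inequality, and with it the full Barlow heat-kernel bounds, remain open under \p{}--\sss{}. Instead one follows the softer route of \cite[Lemma~5.6]{BiskupPrescott}, using \aaaaa{} only to verify the on-diagonal bound $\mathbf P_{\omega,x}[\mathcal Y_t=x]\leq Ct^{-d/2}$ and the expected-displacement bound $\mathbf E_{\omega,x}|\mathcal Y_t-x|\leq C\sqrt t$, which are conditions (2.17)--(2.18) of \cite[Theorem~2.4]{BiskupPrescott}. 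Second, the upgrade from average to everywhere sublinearity is not ``Markov's inequality and a union bound over vertices in the confining box'': a union bound over $\sim n^{d/2}$ vertices against a density that merely tends to zero at an unspecified rate gives nothing. The actual mechanism in \cite[Theorem~2.4]{BiskupPrescott} exploits the \emph{harmonicity} of $\phi(x)=x+\chi(x)$: one writes $\chi(x)=\mathbf E_{\omega,x}[\chi(\mathcal Y_t)]+\mathbf E_{\omega,x}[\mathcal Y_t-x]$, controls the displacement term by (2.17), and handles the averaged corrector by splitting on $\{|\chi(\mathcal Y_t)|>\epsilon n\}$, combining (2.18) with sublinearity on average and an a~priori polynomial-growth bound on $\chi$ (condition (2.16), derived from \a{}, \aa{}, \aaaa{}, and omitted from your sketch) to close the estimate.
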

The proof of Theorem~\ref{thm:ipa} is a routine adaptation of the proof of \cite[Theorem~1.1]{BergerBiskup}. 
Instead of proving \cite[Theorem~6.3]{BergerBiskup}, which relies on the upper bound on heat kernel obtained in \cite[Theorem~1]{Barlow}, 
we follow the proof of \cite[Theorem~2.1]{BiskupPrescott}, which uses softer arguments (still relying very much on observations from 
\cite{Bass,Nash} exploited in \cite{Barlow}, but not 
using the full strenth of the upper bound in \cite[Theorem~1]{Barlow}).
We give a sketch proof of Theorem~\ref{thm:ipa} in Section~\ref{sec:ipaproof}.

\section{Proof of Theorem~\ref{thm:ip}}\label{sec:ip}

In this section we derive Theorem~\ref{thm:ip} from Theorem~\ref{thm:ipa}. 
Namely, we prove that for a family of probability measures $\{\mathbb P^u\}_{u\in(a,b)}$ satisfying \p{} -- \ppp{} and \s{} -- \sss{}, 
every probability measure $\mathbb P^u$ in the family satisfies the conditions \a{} -- \aaaaa{} of Section~\ref{sec:ipa}. 
Our proof can mostly be read independently of Sections~\ref{sec:renormalization} and \ref{sec:isoperimetric}, 
except for the proof of \aaa{}, where we need to use and generalize some results from Section~\ref{sec:renormalization}. 
Fix $u\in(a,b)$. We prove that $\mathbb P^u$ satisfies \a{} -- \aaaaa{}. 

\medskip

$\bullet$ Condition \a{} follows from \p{}.

\medskip

$\bullet$ Condition \aa{} follows from \s{} -- \sss{}.

\medskip

$\bullet$ The fact that \aaaa{} follows from \p{} -- \ppp{} and \s{} -- \sss{} is proved in \cite[Theorem~1.3]{DRS12}.

\medskip

$\bullet$ Condition \aaaaa{} follows from Theorem~\ref{thm:isoperimetric}, \p{} (only translation invariance part), and \s{}. 
It suffices to show that for $\mathbb P^u[\cdot~|~0\in\set_\infty]$-almost every realization $\atom$ and all $R$ sufficiently large, 
the connected component of $0$ in $\set_\infty\cap\ballZ(0,R)$ is the unique largest in volume connected component of $\set\cap\ballZ(0,R)$, 
i.e., using the notation of Theorem~\ref{thm:isoperimetric}, $0\in\mathcal C_R$. 
Indeed, as soon as $0\in\mathcal C_R$ for all large $R$, the inclusion $\ballS(0,R)\subset\mathcal C_R$ holds for all large $R$, and 
\aaaaa{} follows from Theorem~\ref{thm:isoperimetric} and the Borel-Cantelli lemma. 

To prove the remaining claim, we apply \s{} to all the boxes $\ballZ(x,R^{1/2d})$, $x\in \ballZ(0,R-4R^{1/2d})$, 
(this is possible by \p{}) and use the Borel-Cantelli lemma to conclude that 
$\mathbb P^u$-almost surely for all large $R$, 
\begin{itemize}\itemsep0pt
\item[(a)]
each box $\ballZ(x,R^{1/2d})$, $x\in \ballZ(0,R-4R^{1/2d})$, intersects $\set_{R^{1/2d}}$,
\item[(b)] 
for any $x,x'\in \ballZ(0,R-4R^{1/2d})$ such that $|x-x'|_1 = 1$, there exists a unique connected component of $\set_{R^{1/2d}}\cap\ballZ(x,4R^{1/2d})$ 
which intersects $\ballZ(x,R^{1/2d})\cup\ballZ(x',R^{1/2d})$. 
\end{itemize}
Statements (a) and (b) together imply that 
$\mathbb P^u$-almost surely for all large $R$, 
there exists a connected component of $\set_{R^{1/2d}}\cap\ballZ(0,R)$ which intersects every box $\ballZ(x,R^{1/2d})$, $x\in \ballZ(0,R-4R^{1/2d})$, 
and it is the unique connected component of $\set_{R^{1/2d}}\cap\ballZ(0,R)$ which intersects $\ballZ(0,R - 3R^{1/2d})$. 
Therefore, $\mathbb P^u[\cdot~|~0\in\set_\infty]$-almost surely for all large $R$, 
(1) the connected component of $0$ in $\set_\infty\cap\ballZ(0,R)$ intersects each box $\ballZ(x,R^{1/2d})$, $x\in \ballZ(0,R-4R^{1/2d})$, 
and (2) it is the unique connected component of $\set_{R^{1/3}}\cap\ballZ(0,R)$ which intersects $\ballZ(0,R - R^{\frac 13})$. 
By (1), the connected component of $0$ in $\set_\infty\cap\ballZ(0,R)$ has volume $\geq \frac 12 R^{d-\frac 12}$. 
Note that for all large $R$, any connected component of $\set\cap\ballZ(0,R)$ with volume $\geq \frac 12 R^{d-\frac 12}$ has diameter $\geq R^{\frac 13}$ 
and intersects $\ballZ(0,R - R^{\frac 13})$. 
By (2), such connected component must be unique. This implies the claim, and \aaaaa{} follows.

\medskip

$\bullet$ It remains to show that \aaa{} follows from \p{} -- \ppp{} and \s{} -- \sss{}. 
This is done by exploiting the renormalization structure of \cite{DRS12} and adding an additional increasing event to the structure. 
More precisely, we modify Definition~\ref{def:Axu} of event $A^u_x$. 
Let $\{e_i\}_{i=1}^d$ be the unit coordinate vectors in $\Z^d$. 
For $x\in \GG_0$ and $u\in(a,b)$, let $\mathcal A^u_x\in\mathcal F$ be the event that
\begin{itemize}\itemsep0pt
\item[(a)]
for each $e\in\{0,1\}^d$, the set $\set_{L_0}\cap(x+eL_0 + [0,L_0)^d)$
contains a connected component with at least $\frac 34 \eta(u) L_0^d$ vertices,
\item[(b)]
all of these $2^d$ components are connected in 
$\set \cap(x+[0,2L_0)^d)$, 
\item[(c)]
for each $1\leq i\leq d$, 
the ``special'' connected component of $\set_{L_0}$ in $(x + [0,L_0)^d)$ contains a vertex in each of the $d$ line segments 
$I_{x,i} = (x+(\lfloor \frac{L_0}{2}\rfloor, \dots, \lfloor \frac{L_0}{2}\rfloor) + \Z\cdot e_i)\cap(x + [\lfloor \frac{L_0}{3}\rfloor,\lfloor \frac{2L_0}{3}\rfloor )^d)$.  
\end{itemize}
For $u\in(a,b)$ and $x\in \GG_0$, let 
$\overline {\mathcal A}^u_{x,0}$ be the complement of $\mathcal A^u_{x}$, 
and for $u\in(a,b)$, $k\geq 1$, and $x\in\GG_k$ define inductively
\[
\overline {\mathcal A}^u_{x,k} = 
\bigcup_{\begin{array}{c}\scriptscriptstyle{x_1,x_2\in \GG_{k-1}\cap(x + [0,L_k)^d)} \\ \scriptscriptstyle{|x_1-x_2|_\infty \geq r_{k-1} \cdot L_{k-1}}\end{array}}
  \overline {\mathcal A}^u_{x_1,k-1} \cap \overline {\mathcal A}^u_{x_2,k-1}   \; .\
\]
By \p{} and Birkhoff's ergodic theorem, for any $u\in(a,b)$, $x\in\GG_0$, and $1\leq i\leq d$, 
\[
\lim_{L_0 \to \infty} \frac{3}{L_0} 
\sum_{ y \in I_{x,i}} \mathds{1}_{\{y \in\set_{L_0}\}}
    \; \stackrel{\mathbb{P}^u\text{-a.s.}}{=} \; 
\lim_{L_0 \to \infty} \frac{3}{L_0} 
\sum_{ y \in I_{x,i}} \mathds{1}_{\{y \in\set_\infty\}}
    \; \stackrel{\mathbb{P}^u\text{-a.s.}}{=} \; \eta(u) .\
\]
We conclude from \s{}, \sss{}, and \cite[(4.3)]{DRS12} that 
for any $u\in(a,b)$ there exists $\delta = \delta(u)>0$ such that $(1-\delta)u>a$ and 
\[
\mathbb P^{(1-\delta)u}\left[\mathcal A_0^u\right] \to 1 ,\quad \mbox{as} \quad L_0\to\infty .\
\]
As in the proof of \cite[Lemma~4.2]{DRS12}, this implies that 
for each $u\in(a,b)$, there exist $C = C(u)<\infty$ and $C' = C'(u,l_0)<\infty$ such that for all 
$l_0,r_0\geq C$, $L_0\geq C'$, and $k\geq 0$, 
\begin{equation}\label{eq:calA}
\mathbb P^u\left[\overline {\mathcal A}^u_{0,k}\right] \leq 2^{-2^k} .\
\end{equation}
We modify Definition~\ref{def:good} by replacing the events $A^u_x$ by $\mathcal A^u_x$. 
Let $u\in(a,b)$. For $k\geq 0$, we say that $x\in\GG_k$ is $k$-{\it bad} if the event 
$\overline {\mathcal A}^u_{x,k}\cup \overline B^u_{x,k}$
occurs, where $B^u_{x,k}$ is defined in \eqref{def:Buxk}. Otherwise, we say that $x$ is $k$-{\it good}. 
It follows from \eqref{eq:calA} and \cite[Lemma~4.4]{DRS12} that
for each $u\in(a,b)$, there exist $C = C(u)<\infty$ and $C' = C'(u,l_0)<\infty$ such that for all 
$l_0,r_0\geq C$, $L_0\geq C'$, and $k\geq 0$, 
\begin{equation}\label{eq:kbad:new}
\mathbb P^u\left[0\mbox{ is }k\mbox{-bad}\right] \leq 2\cdot 2^{-2^k} .\
\end{equation}
Note that if $0$ is $k$-good, then $\GG_0\cap[0,L_k)^d$ contains a connected component $\mathcal G$ of 
$0$-good vertices of diameter $\frac{L_k}{L_0}$ (in $\GG_0$)
which intersects every line segment $\Z\cdot e_i\cap[0,L_k)^d$, $1\leq i\leq d$. 
This is easily proved by induction from the definition of $k$-good vertex. 
By Lemma~\ref{l:fromG0toZd} and noting that any $0$-good vertex in the new sense is also $0$-good in the sense of Definition~\ref{def:good}, 
the set $\cup_{x\in\mathcal G}\mathcal C_x$ is contained in the same connected component of $\set$ with diameter at least $\frac{L_k}{2}$.
($\mathcal C_x$ is the ``special'' component of $\set\cap(x + [0,L_0)^d)$ defined in Lemma~\ref{l:fromG0toZd}(a).) 
By \s{} and \eqref{eq:C1:infty}, with probability $\geq 1 - Ce^{-c(\log L_k)^{1+\constS}} - 2\cdot 2^{-2^k}$, 
$\cup_{x\in\mathcal G}\mathcal C_x\subset \set_\infty$. 
By the definition of $0$-good vertex, namely using part (c) in the definition of $\mathcal A^u_x$, we obtain that 
\[
\mathbb P^u\left[\set_\infty\cap\left(\left(\left\lfloor \frac{L_0}{2}\right\rfloor, \dots, \left\lfloor \frac{L_0}{2}\right\rfloor\right) + \Z\cdot e_i\right)\cap[0,L_k)^d\neq\emptyset\right]
\geq 1 - Ce^{-c(\log L_k)^{1+\constS}} - 2\cdot 2^{-2^k} .\
\]
For $R\geq 1$, choose the largest $k$ such that $L_k\leq R$. Then as in \eqref{eq:Ls:lowerbound} and \eqref{eq:s:bounds}, we obtain that 
$\log L_k \geq c\log R$ and $2^k \geq (\log R)^{1+\constS}$ for all $R$ large enough. This implies that 
\begin{equation}\label{eq:aaa}
\mathbb P^u\left[\set_\infty\cap\left(\left(\left\lfloor \frac{L_0}{2}\right\rfloor, \dots, \left\lfloor \frac{L_0}{2}\right\rfloor\right) + \Z\cdot e_i\right)\cap\ballZ(0,R)\neq\emptyset\right]
\geq 1 - Ce^{-c(\log R)^{1+\constS}} .\
\end{equation}
Assumption \aaa{} now follows from \p{} and \eqref{eq:aaa}.

\medskip

We have checked that for any $u\in(a,b)$, the probability measure $\mathbb P^u$ satisfies the assumptions \a{} -- \aaaaa{}, 
given that the family $\{\mathbb P^u\}_{u\in(a,b)}$ satisfies \p{} -- \ppp{} and \s{} -- \sss{}. 
Thus, Theorem~\ref{thm:ip} follows from Theorem~\ref{thm:ipa}.
\qed

\bigskip

\section{Remarks on ergodicity assumption}\label{sec:ergodicityremarks}

In this section we discuss possible weakenings of assumption \p{}, more precisely, its part concerning with ergodicity of $\mathbb P^u$. 
Condition \p{} requires ergodicity of $\mathbb P^u$ with respect to {\it every} shift of $\Z^d$, i.e., 
$\mathbb P^u[E]\in\{0,1\}$ for every $E\in\mathcal F$ such that $\tau_x(E) = E$ for {\it some} $x\in\Z^d$.  
This is crucially used in the proof of the shape theorem in \cite{DRS12}. 
However, the proof of \cite[Theorem~1.3]{DRS12} goes through under the milder assumption of ergodicity of $\mathbb P^u$ with respect to the group $\Z^d$, 
i.e., $\mathbb P^u[E]\in\{0,1\}$ for every $E\in\mathcal F$ such that $\tau_x(E) = E$ for {\it all} $x\in\Z^d$.
Indeed, the only place where ergodicity is used in the proof of \cite[Theorem~1.3]{DRS12} is \cite[(4.1)]{DRS12}, which still holds under the weaker assumption. 
Since \cite[(4.1)]{DRS12} is used in the proof of Lemma~\ref{l:Guxk} (Lemmas~4.2 and 4.4 in \cite{DRS12}), 
and since we do not use any form of ergodicity of $\mathbb P^u$ elsewhere in the proof of 
Theorem~\ref{thm:isoperimetric}, we conclude that the result of Theorem~\ref{thm:isoperimetric} holds even if we replace the ergodicity 
of $\mathbb P^u$ with respect to every shift of $\Z^d$ in \p{} by the ergodicity of $\mathbb P^u$ with respect to the group $\Z^d$.

Similarly, in the proof of the quenched invariance principle we do not need the full strength of assumption \p{}. 
Apart from the proof of Theorem~\ref{thm:isoperimetric}, we use ergodicity of $\mathbb P^u$ to check assumptions \a{}, \aaa{}, and \aaaa{}. 
Assumptions \a{} and \aaa{} hold under the milder assumption of ergodicity of $\mathbb P^u$ with respect to each shift along a coordinate direction, 
i.e., $\mathbb P^u[E]\in\{0,1\}$ for every $E\in\mathcal F$ such that $\tau_e(E) = E$ for some $e\in\Z^d$ with $|e|_1 = 1$. 
Assumption \aaaa{} holds under assumption of ergodicity of $\mathbb P^u$ with respect to the group $\Z^d$, as discussed just above. 
Therefore, the result of Theorem~\ref{thm:ip} holds when the ergodicity 
of $\mathbb P^u$ with respect to every shift of $\Z^d$ in \p{} is replaced by 
the ergodicity of $\mathbb P^u$ with respect to each shift along a coordinate direction of $\Z^d$.
We remark that in the case of the random conductance model with elliptic coefficients, the quenched invariance principle holds 
under the ergodicity of random coefficients with respect to the group $\Z^d$ and some moment assumptions, see \cite{ADS13,Biskup}. 
The tricky part is discussed at the end of the proof of \cite[Lemma~4.8]{Biskup}. 
It crucially relies on the positivity of all the coefficients (every vertex of $\Z^d$ can be visited by the random walk) 
and does not generally apply if some coefficients are $0$.

\bigskip

\appendix

\section{Proof of Theorem~\ref{thm:ipa}}\label{sec:ipaproof}

The proof of Theorem~\ref{thm:ipa} follows closely the proof of \cite[Theorem~1.1]{BergerBiskup} 
(see \cite[Section~1.4]{BergerBiskup} there for an outline of the proof) using essential simplifications obtained in \cite{BiskupPrescott}. 
Therefore, we only give a brief sketch here. 
Let us point out that our model fits well into the setup of \cite{BiskupPrescott}, with conductances taking values in $\{0,1\}$. 
In particular, in our situation, there is no need for the truncation argument used in \cite{BiskupPrescott}, since 
we may assume that $\alpha$ defined in \cite[(2.8) and (2.9)]{BiskupPrescott} equals $1$. 
Therefore, using the notation of \cite{BiskupPrescott}, $\mathcal C_{\infty,\alpha} = \mathcal C_\infty$, 
and many arguments of \cite{BiskupPrescott} simplify in our setting. 
(In particular, the continous time random walk $\mathcal Y_t$ defined in \cite[(2.14)]{BiskupPrescott} makes only nearest neighbor jumps in $\mathcal C_\infty$.)
\begin{proof}[Proof of Theorem~\ref{thm:ipa}]
In the heart of the proof is the following result, see \cite[Theorem~2.2]{BergerBiskup}: 
If $\mathbb P$ satisfies \a{} and \aa{}, then there exists a function $\chi~:~\Z^d\times\Omega_0 \to \R^d$ defined by \cite[(2.11)]{BergerBiskup} such that 
\begin{enumerate}\itemsep0pt
\item
for every $x\in\Z^d$, $\chi(x,\cdot)\in L^2(\Omega,\mathcal F,\mathbb P_0)$;
\item
for every $\omega\in\Omega_0$, $\chi(0,\omega) = 0$;
\item
for $\mathbb P_0$-almost every $\omega\in\Omega_0$, and for all $x,y\in\set_\infty$, $\chi(x,\omega) - \chi(y,\omega) = \chi(x-y,\tau_y(\omega))$;
\item
for $\mathbb P_0$-almost every $\omega\in\Omega_0$, the function $x\mapsto x + \chi(x,\omega)$ is harmonic with respect to transition probabilities \eqref{eq:trans};
\item
there exists $C<\infty$ such that for all $x,e\in\Z^d$ with $|e|_1 = 1$, 
\[
\|(\chi(x+e,\cdot) - \chi(x,\cdot))\mathds{1}_{\{x\in\set_\infty\}}\mathds{1}_{\{\omega(x) =1,~ \omega(x+e) = 1\}}\|_2< C .\
\]
\end{enumerate}
The function $\chi$ is called a {\it corrector}. 
Classically (see, e.g., the proof of \cite[Theorem~2.1]{BiskupPrescott} or \cite[Theorem~1.1]{BergerBiskup} in the case $d=2$), 
in order to prove convergence to Brownian motion in Theorem~\ref{thm:ipa}, it suffices to prove that the corrector is sublinear in the following sense.
\begin{lemma}\label{l:chimaxsublinearity}
Let $d\geq 2$, and $\mathbb P$ satisfies \a{}--\aaaaa{}. Then for $\mathbb P_0$-almost every $\omega$, 
\begin{equation}\label{eq:chimaxsublinearity}
\lim_{k\to\infty}\max_{x\in\set_\infty\cap\ballZ(0,k)}\frac{|\chi(x,\omega)|}{k} = 0 .\
\end{equation}
\end{lemma}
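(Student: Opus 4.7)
The plan is to follow the template of \cite{BergerBiskup} with the simplifications from \cite{BiskupPrescott}, organized in three stages: (i) \emph{directional sublinearity} of $\chi$ along each coordinate axis; (ii) \emph{averaged (volumetric) sublinearity} on large balls of $\set_\infty$; and (iii) an \emph{elliptic regularity} upgrade from averaged to maximal sublinearity, driven by the isoperimetric inequality \aaaaa{} and the harmonicity of $x+\chi(x,\omega)$.

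First, I would establish that for each coordinate direction $e_i$, $\mathbb{P}_0$-a.s.\ one has $|\chi(y_k,\omega)|/k\to 0$ along any sequence $y_k\in\set_\infty\cap(\Z e_i)$ with $|y_k|_1\leq k$. By property (5) of the corrector, the random field $\eta_i(x,\omega)=(\chi(x+e_i,\omega)-\chi(x,\omega))\mathds{1}_{\{x,x+e_i\in\set\}}$ is in $L^2(\mathbb{P})$, is shift-covariant by (3), and has mean zero by the construction of $\chi$. Ergodicity of the one-parameter shift $\tau_{e_i}$ from \a{} plus Birkhoff's theorem yield $\tfrac1k\sum_{j=0}^{k-1}\eta_i(je_i,\omega)\to 0$; combined with \aaa{} (the existence of $\set_\infty$-vertices on each axis up to scale $k$) and a Borel--Cantelli step, this gives the required axial sublinearity.

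Second, I would upgrade to the averaged statement
\[
\lim_{k\to\infty}\frac{1}{|\set_\infty\cap\ballZ(0,k)|}\sum_{x\in\set_\infty\cap\ballZ(0,k)}\mathds{1}_{\{|\chi(x,\omega)|\geq\varepsilon k\}}=0,\qquad \mathbb{P}_0\text{-a.s.}
\]
for every $\varepsilon>0$. By \aaaa{}, any $x\in\set_\infty\cap\ballZ(0,k)$ can be joined to an axis point in $\set_\infty$ by an intrinsic path of length $O(k)$. A telescoping argument for $\chi$ along such paths, using property (5) to bound the contribution of each edge in $L^2(\mathbb{P})$, combined with the spatial ergodic theorem for the full $\Z^d$-action (ergodic by \a{}), transfers the axial sublinearity from step~(i) to the bulk of $\set_\infty\cap\ballZ(0,k)$ in the averaged sense.

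Finally, for step (iii) I would exploit that each Cartesian component of $x\mapsto x+\chi(x,\omega)$ is harmonic for the walk on $\set_\infty$. Consequently, each component of $\chi(\cdot,\omega)$ differs from a linear function by a harmonic function on $\set_\infty$, and a Moser-type iteration for harmonic functions on a graph satisfying an isoperimetric inequality $|\partial_\set A|\geq c|A|^{(d-1)/d}$ yields a local sup-bound of the form
\[
\max_{x\in\set_\infty\cap\ballS(0,k)}|\chi(x,\omega)|\;\leq\;C\,k^{d/p}\biggl(\frac{1}{|\set_\infty\cap\ballS(0,2k)|}\sum_{x\in\set_\infty\cap\ballS(0,2k)}|\chi(x,\omega)|^p\biggr)^{\!1/p}+\,Ck,
\]
for a suitable $p\geq 1$ and $C$ depending only on $d$ and the isoperimetric constant from \aaaaa{}. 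Feeding the averaged sublinearity of step (ii) into this bound, dividing by $k$, and using \aaaa{} to compare intrinsic and extrinsic balls, yields \eqref{eq:chimaxsublinearity}. The main obstacle is step (iii): the Moser-type sup-mean bound in a non-elliptic setting must handle the absence of many edges, and this is exactly what \aaaaa{} is tailored to supply (the small-set threshold $|A|\geq k^{1/3}$ being precisely what allows the iteration to absorb low-scale errors). This is the unique place where the isoperimetric output of Theorem~\ref{thm:isoperimetric} is indispensable; the remaining ingredients are the standard ergodic-theoretic pieces from \cite{BergerBiskup,BiskupPrescott}.
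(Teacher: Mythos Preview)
Your steps (i) and (ii) are broadly aligned with the paper's route (directional sublinearity along axes, then averaged sublinearity on boxes, following \cite{BergerBiskup}), though in (i) you should work with the induced shift on $\Omega_0$ and the increment $\chi(n_e,\cdot)$ rather than summing the edge field $\eta_i(je_i,\omega)$ directly: the latter does not telescope to $\chi(ke_i,\omega)$ unless every $je_i$ lies in $\set_\infty$. The required moment bounds on $\chi(n_e,\cdot)$ use \aaa{} \emph{and} \aaaa{} (to control $\rho_\set(0,n_e)$), not just \a{} and \aaa{}.

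The real gap is step (iii). The function $\chi$ is not harmonic; only $\varphi_\omega(x)=x+\chi(x,\omega)$ is. A Moser sup--mean inequality applied to $\varphi_\omega$ on $\ballS(0,2k)$ is dominated by the linear part $x$, which already has size $\sim k$, so the best it can yield is $\sup_{\ballS(0,k)}|\chi|\leq Ck$; your displayed bound with the additive ``$+Ck$'' reflects exactly this, and after dividing by $k$ you are left with a constant, not $0$. There is no obvious way to subtract the linear part and still run Moser, since $\chi$ then satisfies a Poisson equation with a right-hand side that does not vanish.

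The paper's upgrade from averaged to maximal sublinearity proceeds differently, via the framework of \cite[Theorem~2.4]{BiskupPrescott}. Two additional ingredients are needed beyond (i)--(ii): an a priori \emph{polynomial} bound $\max_{\ballZ(0,k)}|\chi|=o(k^\theta)$ for some finite $\theta$ (obtained from \aaaa{}), and diffusive heat-kernel estimates for the continuous-time walk $\mathcal Y_t$, namely $\sup_t t^{d/2}\mathbf P_{\omega,x}[\mathcal Y_t=x]<\infty$ and $\sup_t t^{-1/2}\mathbf E_{\omega,x}|\mathcal Y_t-x|<\infty$. The isoperimetric assumption \aaaaa{} enters only through these heat-kernel bounds, via a Nash-type inequality. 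The conclusion then follows from the martingale identity $\chi(x,\omega)=\mathbf E_{\omega,x}[\chi(\mathcal Y_t,\omega)]-\mathbf E_{\omega,x}[\mathcal Y_t-x]$: at time $t\asymp k^2$ the second term is $O(\sqrt t)=O(k)$ with a small constant, while the first is controlled by the averaged sublinearity (most sites have small $\chi$) combined with the polynomial a priori bound (to handle the exceptional sites). This is the mechanism you are missing.
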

Before we give the proof of Lemma~\ref{l:chimaxsublinearity}, we finish the proof of Theorem~\ref{thm:ipa}. 
As already indicated earlier, the proof of convergence to Brownian motion with zero drift follows line by line the proof of \cite[Theorem~1.1]{BergerBiskup} in the case $d=2$, 
see also the proof of \cite[Theorem~2.1]{BiskupPrescott}. 
The statements about symmetric $\mathbb P$ can be treated also as in the proof of \cite[Theorem~2.1]{BiskupPrescott}.
The fact that the covariance matrix of the limiting Brownian motion is non-degenerate follows from the sublinearity of the corrector, similarly to \cite{BiskupPrescott}. 
However, since the proof of the analogous fact in \cite{BiskupPrescott} benefits from the rotational and reflectional symmetries of the measure, 
which we do not assume here, we present some details below. 

Similarly to \cite{BiskupPrescott}, we define for $\omega\in\Omega$ and $x\in\Z^d$, the function $\varphi_\omega(x) = x + \chi(x,\omega)$. 
Let $M = \varphi_\omega(X_1)$, where $X_1$ is the first step of the random walk defined in \eqref{eq:trans}. 
As in \cite[(5.30) and (5.31)]{BiskupPrescott}, the (deterministic) covariance matrix $\Sigma^2$ of the limiting Brownian motion satisfies 
\[
\langle v, \Sigma^2 v\rangle = \mathbb E_0 \mathbf E_{\omega,0} \left[\langle v, M\rangle^2\right], \qquad\text{for all $v\in\mathbb R^d$.}
\]
Assume that $\langle v, \Sigma^2 v\rangle = 0$ for some $v\in\mathbb R^d$. 
Since $\set_\infty$ is $\mathbb P$-almost surely connected, $\mathbb P_0[x\in \set_\infty]>0$ for all $x\in\mathbb Z^d$ with $|x|_1 = 1$. 
Thus, for each such $x$, $\mathbb E_0 [\langle v,\varphi_\omega(x)\rangle^2\cdot\mathds{1}_{\{x\in\set_\infty\}}] = 0$, which implies that 
\begin{equation}\label{eq:scalprod=0:1}
\text{$\mathbb E\left[|\langle v,\varphi_\omega(x)\rangle|\cdot \mathds{1}_{\{0,x\in\set_\infty\}}(\omega)\right] = 0$, for all $x\in\Z^d$ with $|x|_1=1$.}
\end{equation}
We will prove that 
\begin{equation}\label{eq:scalprod=0:2}
\text{$\langle v, \varphi_\omega(y)\rangle\cdot \mathds{1}_{\{y\in\set_\infty\}} = 0$ for all $y\in\Z^d$ and $\mathbb P_0$-almost every $\omega$.}
\end{equation}
Let $x_0=0,x_1,\ldots,x_n$ be a simple nearest neighbor path in $\Z^d$. Then,
\begin{eqnarray*}
\mathbb E\left[|\langle v,\varphi_\omega(x_n)\rangle|\cdot \mathds{1}_{\{x_0,\ldots,x_n\in\set_\infty\}}(\omega)\right] 
&\leq
&\mathbb E\left[|\langle v,\varphi_\omega(x_n) - \varphi_\omega(x_1)\rangle|\cdot \mathds{1}_{\{x_0,\ldots,x_n\in\set_\infty\}}(\omega)\right] \\
&=
&\mathbb E\left[|\langle v,\varphi_{\tau_{x_1}\omega}(x_n-x_1)\rangle|\cdot \mathds{1}_{\{x_0,\ldots,x_n\in\set_\infty\}}(\omega)\right] \\
&\leq 
&\mathbb E\left[|\langle v,\varphi_{\tau_{x_1}\omega}(x_n-x_1)\rangle|\cdot \mathds{1}_{\{x_1,\ldots,x_n\in\set_\infty\}}(\omega)\right] \\
&=
&\mathbb E\left[|\langle v,\varphi_{\tau_{x_1}\omega}(x_n-x_1)\rangle|\cdot \mathds{1}_{\{0,\ldots,(x_n-x_1)\in\set_\infty\}}(\tau_{x_1}\omega)\right] \\
&=
&\mathbb E\left[|\langle v,\varphi_{\omega}(x_n-x_1)\rangle|\cdot \mathds{1}_{\{0,\ldots,(x_n-x_1)\in\set_\infty\}}(\omega)\right] \\
&\leq
&\ldots\\
&\leq
&\mathbb E\left[|\langle v,\varphi_{\omega}(x_n-x_{n-1})\rangle|\cdot \mathds{1}_{\{0,(x_n-x_{n-1})\in\set_\infty\}}(\omega)\right] \\
&=
&0,
\end{eqnarray*}
where in the first step we used the triangle inequality and \eqref{eq:scalprod=0:1}, 
in the second step we used property 3 of the corrector, in the fifth we used the shift invariance of $\mathbb P$, 
and in the last step we again used \eqref{eq:scalprod=0:1}. Thus, for any nearest neighbor path $\pi$ from $0$ to $y\in\Z^d$, 
\[
\mathbb E\left[|\langle v,\varphi_\omega(y)\rangle|\cdot \mathds{1}_{\{\pi\subset\set_\infty\}}(\omega)\right] = 0.
\]
Fix $y\in\Z^d$. By summing over all simple nearest neighbor paths $\pi$ from $0$ to $y\in\Z^d$, we obtain that 
\begin{eqnarray*}
0 &= 
&\sum_\pi\mathbb E\left[|\langle v,\varphi_\omega(y)\rangle|\cdot \mathds{1}_{\{\pi\subset\set_\infty\}}(\omega)\right] \\
&=
&\mathbb E\left[|\langle v,\varphi_\omega(y)\rangle|\cdot \sum_\pi\mathds{1}_{\{\pi\subset\set_\infty\}}(\omega)\right] \\
&\geq
&\mathbb E\left[|\langle v,\varphi_\omega(y)\rangle|\cdot \mathds{1}_{\{0,y\in\set_\infty\}}(\omega)\right],
\end{eqnarray*}
which implies \eqref{eq:scalprod=0:2}.

For $u\in\mathbb R^d$, let $[u]$ be the closest to $u$ point from $\set_\infty$ (with ties broken arbitrarily). 
By \a{}, \aaa{}, and the Borel-Cantelli lemma, for all large enough $n$, $|[nv] - nv| \leq \sqrt{n}$. 
Since $\langle v, \varphi_\omega([nv])\rangle = 0$ for $\mathbb P_0$-almost every $\omega$, 
which is equivalent to $\langle v, [nv]\rangle = - \langle v, \chi([nv],\omega)\rangle$, 
if we divide by $n$, send $n$ to infinity, and use \eqref{eq:chimaxsublinearity}, then we arrive at $\langle v,v\rangle = 0$. 

Thus, we proved that for all $v\neq 0$, $\langle v, \Sigma^2 v\rangle>0$. 
The proof of Theorem~\ref{thm:ipa} is complete, subject to the assertion of Lemma~\ref{l:chimaxsublinearity}.
\end{proof}

We will now prove Lemma~\ref{l:chimaxsublinearity}. 
\begin{proof}[Proof of Lemma~\ref{l:chimaxsublinearity}]
The proof of Lemma~\ref{l:chimaxsublinearity} follows the strategy indicated in \cite[Theorem~2.4]{BiskupPrescott}, 
where sufficient conditions for \eqref{eq:chimaxsublinearity} are stated. 
It follows from \cite{BiskupPrescott} that if the corrector satisfies the assumptions of \cite[Theorem~2.4]{BiskupPrescott} and $\mathbb P$ satisfies \cite[Proposition~2.3]{BiskupPrescott}, 
then \eqref{eq:chimaxsublinearity} holds. 
Note that in our case, \cite[Proposition~2.3]{BiskupPrescott} always holds, since (in the notation of \cite{BiskupPrescott}) $\mathcal C_\infty\setminus\mathcal C_{\infty,\alpha} = \emptyset$ 
for any $\alpha$ satisfying \cite[(2.8) and (2.9)]{BiskupPrescott}. 
Thus, it suffices to check that $\chi$ satisfies conditions of \cite[Theorem~2.4]{BiskupPrescott} (with $\mathcal C_{\infty,\alpha}$ replaced by $\set_\infty$). 
We will verify these conditions now.

The same proof as the one of \cite[Theorem~4.1(4)]{BiskupPrescott} gives that if $\mathbb P$ satisfies \a{}, \aa{}, and \aaaa{}, then 
for some $\theta>0$ and $\mathbb P_0$-almost every $\omega$,
\[
\lim_{k\to\infty}\max_{x\in\set_\infty\cap\ballZ(0,k)}\frac{|\chi(x,\omega)|}{k^\theta} = 0 .\
\]
This is condition (2.16) of \cite[Theorem~2.4]{BiskupPrescott}.

For $\omega\in\Omega_0$ and $e\in\Z^d$ with $|e|_1 = 1$, let $n_e = n_e(\omega) = \min\{k>0~:~k\cdot e\in\set_\infty\}$. 
Note that if $\mathbb P$ satisfies \a{} and \aa{}, then the set $\{k>0~:~k\cdot e\in\set_\infty\}$ has positive density in $\mathbb N$, 
and so $n_e(\omega)<\infty$ almost surely. 
\begin{lemma}\label{l:sublinearity}
Let $d\geq 2$, and $\mathbb P$ satisfies \a{} and \aa{}. 
If, in addition, for every $e\in\Z^d$ with $|e|_1 = 1$, $\mathbb E_0[|\chi(n_e,\cdot)|]<\infty$ and $\mathbb E_0[\chi(n_e,\cdot)] = 0$, then 
for all $\epsilon>0$ and $\mathbb P_0$-almost every $\omega\in\Omega_0$, 
\begin{equation}\label{eq:chisublinearity}
\limsup_{k\to\infty}\frac{1}{(2k+1)^d}\sum_{x\in\set_\infty\cap\ballZ(0,k)}\mathds{1}_{\{|\chi(x,\omega)|\geq\epsilon k\}} = 0 .\
\end{equation}
\end{lemma}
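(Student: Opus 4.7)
The plan is to prove Lemma~\ref{l:sublinearity} in two stages: first establish sublinearity of $\chi$ along each coordinate axis of $\set_\infty$ via Birkhoff applied to an induced shift, and then bootstrap to the averaged statement~\eqref{eq:chisublinearity} via a slicing argument that reduces the dimension.

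For axis sublinearity, fix $e\in\Z^d$ with $|e|_1=1$. Under \a{} and \aa{}, the induced transformation $T_e\colon\Omega_0\to\Omega_0$ defined by $T_e\omega=\tau_{n_e(\omega)\cdot e}\omega$ is measure-preserving and ergodic on $(\Omega_0,\mathbb P_0)$ by Kac's lemma (using ergodicity of $\tau_e$ from \a{}), and $\mathbb E_0[n_e]=1/\eta$ with $\eta=\mathbb P[0\in\set_\infty]$. Setting $y_0=0$ and $y_{j+1}=y_j+n_e(\tau_{y_j}\omega)\cdot e$ enumerates $\set_\infty\cap\Z_{\geq 0}\cdot e$, and iterating the cocycle property~(3) yields
\[
\chi(y_n,\omega)=\sum_{j=0}^{n-1}F(T_e^j\omega),\qquad F(\omega):=\chi(n_e(\omega)\cdot e,\omega).
\]
Under the hypotheses $\mathbb E_0[|F|]<\infty$ and $\mathbb E_0[F]=0$, Birkhoff's theorem for $T_e$ gives $\chi(y_n,\omega)/n\to 0$ $\mathbb P_0$-a.s., while Birkhoff applied to $n_e$ gives $|y_n|/n\to 1/\eta$. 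Hence $|\chi(y,\omega)|/|y|\to 0$ as $y\to\infty$ along $\set_\infty\cap\Z\cdot e$, and similarly for $-e$.

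For the averaged statement, I would proceed by induction on $d$, the base case $d=1$ following directly from axis sublinearity together with the ergodic theorem for $\mathds{1}_{\set_\infty}$. For the inductive step ($d\geq 2$), partition $\ballZ(0,k)$ into axis-parallel lines $L_{x'}=\{(y,x'):|y|\leq k\}$ indexed by $x'\in[-k,k]^{d-1}$. For each $x'$ with $(0,x')\in\set_\infty$, the cocycle decomposition
\[
\chi((y,x'),\omega)=\chi((0,x'),\omega)+\chi(ye_1,\tau_{(0,x')}\omega)
\]
holds for every $(y,x')\in\set_\infty$. By the inductive hypothesis applied to the hyperplane $\{0\}\times\Z^{d-1}$, the density of $x'\in[-k,k]^{d-1}$ with $|\chi((0,x'),\omega)|\geq(\epsilon/2)k$ tends to zero. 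For the remaining $x'$, axis sublinearity applied to $\tau_{(0,x')}\omega$ bounds the shifted-axis term by $(\epsilon/2)k$ uniformly in $|y|\leq k$, provided the convergence $|\chi(ye_1,\tau_{(0,x')}\omega)|/k\to 0$ is uniformly fast over such $x'$. Lines $L_{x'}$ with $(0,x')\notin\set_\infty$ are treated by replacing $(0,x')$ by the $\set_\infty$-point on $L_{x'}$ closest to the origin, whose distance to the origin is $o(k)$ for $1-o(1)$ of $x'$ by \aaa{} combined with a union bound.

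The main obstacle will be establishing the uniform rate of axis sublinearity for the shifted configurations $\tau_{(0,x')}\omega$ over a density-$(1-o(1))$ set of $x'\in[-k,k]^{d-1}$; axis sublinearity holds $\mathbb P_0$-a.s.\ with $\omega$-dependent rate, and one must upgrade to a rate uniform on a large set of shifted base-points. The standard way to resolve this is to apply Egorov's theorem to extract a set $\Omega^*\subset\Omega_0$ of $\mathbb P_0$-measure at least $1-\delta$ on which the convergence in axis sublinearity is uniform, and then to use the multi-parameter ergodic theorem in the transverse $\Z^{d-1}$ directions to conclude that the density of $x'$ with $\tau_{(0,x')}\omega\in\Omega^*$ tends to $\eta(1-\delta)$, arbitrarily close to $\eta$. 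Combining this uniformity with the inductive hypothesis on the slice $\{0\}\times\Z^{d-1}$ and summing the line estimates over $x'$ yields~\eqref{eq:chisublinearity} in dimension $d$ and closes the induction.
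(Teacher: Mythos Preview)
Your approach is precisely the one in \cite[Theorem~5.4]{BergerBiskup}, which is what the paper cites as the proof; the two stages (axis sublinearity via the induced shift and Birkhoff, then a slicing/induction argument with Egorov to upgrade to uniformity) are exactly the ingredients there. Two points in your write-up need adjustment, however.

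First, the induction cannot be on the ambient dimension $d$. The corrector $\chi$ is a fixed object on $\set_\infty\subset\Z^d$; its restriction to a hyperplane is not a corrector for any $(d-1)$-dimensional model, and $\set_\infty\cap(\{0\}\times\Z^{d-1})$ is in general not even connected, so the lemma in dimension $d-1$ says nothing about it. What works (and what \cite{BergerBiskup} does) is to keep $d$ and $\chi$ fixed and induct on the dimension $\nu\in\{1,\dots,d\}$ of the slab, proving for each $\nu$ that
\[
\limsup_{k\to\infty}\frac{1}{(2k+1)^\nu}\sum_{x\in\set_\infty\cap([-k,k]^\nu\times\{0\}^{d-\nu})}\mathds{1}_{\{|\chi(x,\omega)|\geq\epsilon k\}}=0.
\]
Your slicing and cocycle decomposition then go through verbatim with ``inductive hypothesis'' meaning the $\nu-1$ case of this family of statements.

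Second, you invoke \aaa{}, but the lemma assumes only \a{} and \aa{}, so no union bound over $x'$ is available. The fact that a $1-o(1)$ fraction of lines $L_{x'}$ meet $\set_\infty$ within $o(k)$ of the slab follows from \a{} alone: the first hitting point along $e_1$ is a.s.\ finite by ergodicity of $\tau_{e_1}$, and then the spatial ergodic theorem in the transverse directions (applied to the indicator of the event that this first hitting point exceeds $M$) gives the density statement, with $M\to\infty$ chosen after $\delta$.
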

\begin{proof}[Proof of Lemma~\ref{l:sublinearity}]
The proof is a word-for-word repetition of the proof of \cite[Theorem~5.4]{BergerBiskup}. 
(It is stated in \cite{BergerBiskup} only for $d\geq 3$, but the proof goes without changes for $d=2$ too, see comments at the beginning of \cite[Section~5]{BergerBiskup}.)
Indeed, the main ingredient in the proof of \cite[Theorem~5.4]{BergerBiskup} is \cite[Theorem~4.1]{BergerBiskup} (sublinearity of the corrector along coordinate axes), 
which holds for any $\mathbb P$ satisfying \a{} and \aa{} as long as conditions of \cite[Proposition~4.2]{BergerBiskup} are satisfied. 
These are exactly the additional assumptions on $\mathbb P$ in the statement of Lemma~\ref{l:sublinearity}. 
The proof of Lemma~\ref{l:sublinearity} is complete.
\end{proof}

We next observe that if, in addition, $\mathbb P$ satisfies assumptions \aaa{} and \aaaa{}, then conditions on the moments of $\chi(n_e,\cdot)$ in Lemma~\ref{l:sublinearity} are fulfilled.
\begin{lemma}\label{l:chimoments}
Let $d\geq 2$, and $\mathbb P$ satisfies \a{} -- \aaaa{}.
Then for every $e\in\Z^d$ with $|e|_1 = 1$, $\mathbb E_0[|\chi(n_e,\cdot)|]<\infty$ and $\mathbb E_0[\chi(n_e,\cdot)] = 0$.
\end{lemma}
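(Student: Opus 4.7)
The plan is to establish integrability of $\chi(n_e\cdot e,\cdot)$ first, and then to deduce the zero-mean property from an ergodic-theoretic identity combined with the sublinearity of the corrector already used in the proof of Lemma~\ref{l:chimaxsublinearity}.

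For integrability, I would exploit the cocycle property (property~3) along a path in $\set_\infty$. Since $\set_\infty$ is connected $\mathbb P$-a.s.\ by \aa{}, for $\mathbb P_0$-a.e.\ $\omega$ there is a shortest path $\pi_0=0,\pi_1,\ldots,\pi_N=n_e\cdot e$ in $\set_\infty$ with $N=\rho_\set(0,n_e\cdot e)$. Property~3 gives $\chi(n_e\cdot e,\omega)=\sum_{j<N}\chi(\xi_j,\tau_{\pi_j}\omega)$ with $\xi_j=\pi_{j+1}-\pi_j$, and the Cauchy--Schwarz inequality yields
\[
|\chi(n_e\cdot e,\omega)|^2 \leq N \sum_{x\in\set_\infty\cap\ballZ(0,N)}\sum_{|\xi|_1=1,\,x+\xi\in\set_\infty}|\chi(\xi,\tau_x\omega)|^2.
\]
By translation-invariance of $\mathbb P$ under $\tau_x$ combined with property~5, the expectation of the double sum with $N$ replaced by a deterministic $R$ is at most $CR^d$. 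Since \aaaa{} controls $N$ linearly by $n_e$ outside a stretched-exponentially small event, and \aaa{} supplies a stretched-exponential tail for $n_e$, a dyadic decomposition on the value of $n_e$ followed by Cauchy--Schwarz (using the unconditional $L^2$-bound of property~1 to absorb the rare event where the chemical distance exceeds a constant multiple of $n_e$) gives $\mathbb E_0[|\chi(n_e\cdot e,\cdot)|]<\infty$.

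For the zero-mean property, consider the induced transformation $\theta\colon\Omega_0\to\Omega_0$ defined by $\theta\omega=\tau_{n_e(\omega)e}\omega$. By \a{} and the classical theory of induced transformations, $\theta$ is $\mathbb P_0$-preserving and ergodic. Iterating $\theta$ generates the sequence $T_0=0<T_1<T_2<\ldots$ of successive non-negative integers $m$ with $me\in\set_\infty$, and the cocycle property yields
\[
\chi(T_n e,\omega)=\sum_{k=0}^{n-1}f(\theta^k\omega),\qquad f(\omega):=\chi\bigl(n_e(\omega)\cdot e,\omega\bigr).
\]
The integrability of $f$ just established allows me to apply Birkhoff's theorem to $\theta$, giving $\chi(T_ne,\omega)/n\to \mathbb E_0[f]$, $\mathbb P_0$-a.s. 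On the other hand, Birkhoff applied to $\tau_e$ with the indicator $\mathds{1}_{\{0\in\set_\infty\}}$ yields $T_n/n\to 1/\eta$ with $\eta=\mathbb P[0\in\set_\infty]>0$, and the polynomial sublinearity of the corrector established via \cite[Theorem~4.1(4)]{BiskupPrescott} (already applicable under \a{}, \aa{}, \aaaa{}) gives $|\chi(T_ne,\omega)|/T_n\to 0$, $\mathbb P_0$-a.s. Combining the two limits forces $\chi(T_ne,\omega)/n\to 0$, hence $\mathbb E_0[f]=0$, which is the desired identity.

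The main obstacle will be the first step. Although property~5 provides a uniform $L^2$-bound on edge increments, the path from $0$ to $n_e\cdot e$ has random length with correlated increments, so the integrability estimate requires a careful interplay between the stretched-exponential tail bounds from \aaa{} and \aaaa{} and the polynomial-in-$k$ moment estimate obtained from the path--sum argument; once integrability is in hand, the zero-mean property follows from a standard combination of the ergodic theorem with the sublinearity already proven.
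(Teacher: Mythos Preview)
Your integrability argument is essentially the one the paper has in mind: it defers to \cite[Proposition~4.2 and Lemma~4.4]{BergerBiskup}, whose core input is precisely that all $\mathbb P_0$-moments of $\rho_\set(0,n_e\cdot e)$ are finite, which you obtain from \aaa{} and \aaaa{}, and then couples this with the $L^2$-bound on edge increments (property~5) via a path-sum and Cauchy--Schwarz. Modulo routine bookkeeping in the dyadic decomposition, this part is fine.

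The gap is in your argument for $\mathbb E_0[\chi(n_e\cdot e,\cdot)]=0$. You invoke \cite[Theorem~4.1(4)]{BiskupPrescott} to conclude $|\chi(T_ne,\omega)|/T_n\to 0$, but that result only gives \emph{polynomial growth}: $\max_{|x|\leq k}|\chi(x,\omega)|=o(k^\theta)$ for some $\theta>0$, with no guarantee that $\theta<1$. In fact the standard proof (path-sum, Cauchy--Schwarz, spatial ergodic theorem for the sum of squared increments, and the linear chemical-distance bound from \aaaa{}) yields $\theta$ of order $(d+1)/2>1$. So polynomial growth does not force $\chi(T_ne,\omega)/T_n\to 0$. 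Replacing it by the full sublinearity of Lemma~\ref{l:chimaxsublinearity} is circular, since that lemma is proved (via Lemma~\ref{l:sublinearity}) using exactly the zero-mean statement you are trying to establish; and sublinearity along the axis in \cite[Theorem~4.1]{BergerBiskup} likewise already consumes the zero-mean property.

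The paper's route, following \cite[Proposition~4.2]{BergerBiskup}, avoids this by going back to the \emph{construction} of $\chi$: the corrector is obtained as an $L^2$-limit of gradients $\chi=\lim_n\nabla\psi_n$ with $\psi_n\in L^2(\mathbb P)$. One shows (using the moment bound on $\rho_\set(0,n_e\cdot e)$ you already established) that $\chi(n_e\cdot e,\omega)$ is the $L^1(\mathbb P_0)$-limit of $\psi_n(\theta\omega)-\psi_n(\omega)$, where $\theta$ is your induced shift. Since $\theta$ preserves $\mathbb P_0$, each approximant has mean zero, and the mean passes to the limit. This coboundary argument is what you are missing; once you have integrability, it is short and uses no sublinearity at all.
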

\begin{proof}[Proof of Lemma~\ref{l:chimoments}]
The proof is essentially the same as the proof of \cite[Proposition~4.2]{BergerBiskup}. 
The latter relies on the fact that for every $e\in\Z^d$ with $|e|_1 = 1$, 
all the $\mathbb P_0$-moments of $\rho_\set(0,n_e)$ are finite, see \cite[Lemma~4.4]{BergerBiskup}. 
In our case, this follows from assumptions \aaa{} and \aaaa{} exactly as in the proof of \cite[Lemma~4.4]{BergerBiskup}.
\end{proof}
Statement \eqref{eq:chisublinearity} is precisely condition (2.15) of \cite[Theorem~2.4]{BiskupPrescott}.
Thus, it remains to show that conditions (2.17) and (2.18) of \cite[Theorem~2.4]{BiskupPrescott} hold. 
Namely, let $(N_t)_{t\geq 0}$ be the Poisson process with jump-rate $1$, and $\mathcal Y_t = X_{N_t}$. 
If $\mathbb P$ satisfies \aaaaa{}, then for $\mathbb P_0$-almost every $\omega$, 
\begin{equation}\label{eq:bp17}
\sup_{k\geq 1}\max_{x\in\set_\infty\cap\ballZ(0,k)}\sup_{t\geq k}\frac{\mathbf E_{\omega,x}[|\mathcal Y_t - x|]}{\sqrt{t}} < \infty
\end{equation}
and
\begin{equation}\label{eq:bp18}
\sup_{k\geq 1}\max_{x\in\set_\infty\cap\ballZ(0,k)}\sup_{t\geq k}t^{\frac d2}\mathbf P_{\omega,x}[\mathcal Y_t = x] < \infty .\
\end{equation}
As in the proof of \cite[Lemma~5.6]{BiskupPrescott} (see \cite[(6.33) and (6.34)]{BiskupPrescott}), 
\eqref{eq:bp18} follows once we show that $\mathbb P_0$-almost surely, 
\begin{equation}\label{eq:ciso}
\inf_{k\geq 1}\inf\left\{
\frac{|\partial_\set A|}{|A|^{\frac{d-1}{d}}}~:~A\subset \set_\infty\cap\ballS(0,2k), |A|\geq k^{1/3}
\right\}>0
\end{equation}
and \eqref{eq:bp17} follows once we show that $\mathbb P_0$-almost surely,
\begin{equation}\label{eq:cvol}
\sup_{k\geq 1}\max_{x\in\set_\infty\cap\ballZ(0,k)}\sup_{t\geq k}\sup_{0<s\leq t^{-1/2}}\left[
s^d\sum_{y\in\set_\infty}e^{-s\rho_\set(x,y)}
\right]<\infty .\
\end{equation}
Inequality \eqref{eq:ciso} is satisfied by assumption \aaaaa{}, 
and inequality \eqref{eq:cvol} follows from the fact that $\rho_\set(x,y) \geq |x-y|_1$ for all $x,y\in\set$.

We have checked that $\chi$ satisfies all the conditions of \cite[Theorem~2.4]{BiskupPrescott}. 
Therefore, \eqref{eq:chimaxsublinearity} follows, and the proof of Lemma~\ref{l:chimaxsublinearity} is complete.
\end{proof}

\paragraph{Acknowledgements.}
We thank Noam Berger, Takashi Kumagai, and Bal\'azs R\'ath for valuable discussions, 
Alain-Sol Sznitman for constructive comments on the draft, and Yoshihiro Abe for careful reading of the draft and useful comments.
The research of RR is supported by the ETH fellowship.


\begin{thebibliography}{99}


\bibitem{ABDH13}
S. Andres, M. T. Barlow, J.-D. Deuschel, and B. M. Hambly (2013) Invariance
principle for the random conductance model. {\it Probab. Theory Related Fields}
{\bf 156(3-4)}, 535--580.

\bibitem{ADS13}
S. Andres, J.-D. Deuschel, and M. Slowik (2015) 
Invariance principle for the random conductance model in a degenerate ergodic environment.
{\it Ann. Probab.} {\bf 43(4)}, 1866--1891.

\bibitem{Barlow}
M. T. Barlow (2004) Random walks on supercritical percolation clusters. 
{\it Ann. Probab.} {\bf 32}, 3024--3084.

\bibitem{BD10}
M. T. Barlow and J.-D. Deuschel (2010) Invariance principle for the random conductance
model with unbounded conductances. {\it Ann. Probab.} {\bf 38(1)}, 234--276.

\bibitem{Bass}
R. F. Bass (2002) On Aronson's upper bounds for heat kernels. 
{\it Bull. London Math. Soc.} {\bf 34}, 415--419.

\bibitem{BM03}
I. Benjamini and E. Mossel (2003) 
On the mixing time of a simple random walk on the super critical percolation cluster. 
{\it Probab. Theor. Rel. Fields} {\bf 125(3)}, 408--420.

\bibitem{BergerBiskup}
N. Berger and M. Biskup (2007) 
Quenched invariance principle for simple random walk on percolation cluster. 
{\it Probab. Theory Rel. Fields} {\bf 137}, 83--120.

\bibitem{BBHK}
N. Berger, M. Biskup, C. Hoffman and G. Kozma (2008)
Anomalous heat-kernel decay for random walk on among bounded random conductances. 
{\it Ann. Inst. H. Poincar\'e Probab. Statist.} {\bf 44(2)}, 374--392.

\bibitem{BD11}
N. Berger and J.-D. Deuschel (2014) A quenched invariance principle
for non-elliptic random walk in iid balanced random environment. 
{\it Probab. Theory and Rel. Fields} {\bf 158(1)}, 91--126.

\bibitem{Biskup}
M. Biskup (2011) 
Recent progress on the Random Conductance Model. 
{\it Prob. Surveys} {\bf 8}, 294--373.

\bibitem{BiskupPrescott}
M. Biskup and T. Prescott (2007)
Functional CLT for random walk among bounded random conductances. 
{\it Electron. J. Probab.}, {\bf 12}, paper 49, 1323--1348.


\bibitem{BLM}
J. Bricmont, J. L. Lebowitz and C. Maes (1987) 
Percolation in strongly correlated systems: the massless Gaussian field. 
{\it J. Stat. Phys.} {\bf 48} (5/6), 1249--1268.


\bibitem{DeuschelPisztora}
J. D. Deuschel and A. Pisztora (1996)
Surface order large deviations for high-density percolation.
{\it Probab. Theory Related Fields} {\bf 104}, 467--482.

\bibitem{DRS}
A. Drewitz, B. R\'ath and A. Sapozhnikov (2012) 
Local percolative properties of the vacant set of random interlacements with small intensity. 
{\it Ann. Inst. H. Poincar\'e Probab. Statist.} {\bf 50(4)}, 1165--1197.

\bibitem{DRS12}
A. Drewitz, B. R\'ath and A. Sapozhnikov (2012) 
On chemical distances and shape theorems in percolation models with long-range correlations. 
{\it J. Math. Phys.} {\bf 55(8)}. 

\bibitem{GZ12}
X. Guo and O. Zeitouni (2012) Quenched invariance principle for random
walks in balanced random environment. {\it Probab. Theory Related Fields} {\bf 152(1-2)}, 207--230.

\bibitem{Lawler83}
G. F. Lawler (1982/83) Weak convergence of a random walk in a random environment.
{\it Comm. Math. Phys.} {\bf 87(1)}, 81--87.

\bibitem{LS86}
J. L. Lebowitz and H. Saleur (1986) Percolation in strongly correlated systems. {\it Phys. A} {\bf 138}, 194--205.

\bibitem{LSS}
T. M. Liggett, R. H. Schonmann, and A. M. Stacey (1997)
Domination by product measures. {\it Ann. Probab.} {\bf 25}, 71--95.

\bibitem{MFGW85}
A. De Masi, P. A. Ferrari, S. Goldstein, and W. D. Wick (1985) Invariance principle
for reversible Markov processes with application to diffusion in the percolation
regime. In Particle systems, random media and large deviations (Brunswick,
Maine, 1984), {\it Contemp. Math.} {\bf 41}, 71--85, Amer. Math. Soc.,
Providence, RI, 1985.

\bibitem{MFGW89}
A. De Masi, P. A. Ferrari, S. Goldstein, and W. D. Wick (1989) 
An invariance principle for reversible Markov processes. 
Applications to random motions in random environments. 
{\it J. Statist. Phys.} {\bf 55(3-4)}, 787--855.
 
\bibitem{MathieuRemy}
P. Mathieu and E. Remy (2004) 
Isoperimetry and heat kernel decay on percolations clusters. 
{\it Ann. Probab.} {\bf 32}, 100--128.
 
\bibitem{Mathieu08}
P. Mathieu (2008) Quenched invariance principles for random walks with random
conductances. {\it J. Stat. Phys.} {\bf 130(5)}, 1025--1046.

\bibitem{MathieuPiatnitski}
P. Mathieu and A. L. Piatnitski (2007) 
Quenched invariance principles for random walks on percolation clusters. 
{\it Proceedings of the Royal Society A} {\bf 463}, 2287--2307. 

\bibitem{Nash}
J. Nash (1958) Continuity of solutions of parabolic and elliptic equations. 
{\it Amer. J. Math.} {\bf 80}, 931--954.

\bibitem{Pete08}
G. Pete (2008) A note on percolation on $\Z^d$: Isoperimetric profile via exponential cluster repulsion.
{\it Electron. Commun. Probab.} {\bf 13}, Paper no. 37, 377--392.
 
\bibitem{PR}
S. Popov and B. R\'ath (2015)
On decoupling inequalities and percolation of excursion sets of the Gaussian free field.
{\it J. Stat. Phys.} {\bf 159(2)}, 312--320.


\bibitem{PoTe}
S. Popov and A. Teixeira (2012)
Soft local times and decoupling of random interlacements.
{\it (to appear in the J. of the Eur. Math. Soc.)}
arXiv:1212.1605.


\bibitem{PrShel}
E. Procaccia and E. Shellef (2014)
On the range of a random walk in a torus and random interlacements. 
{\it Ann. Probab.} {\bf 42(4)}, 1590--1634.

\bibitem{RS:Disordered} 
B. R\'ath and A. Sapozhnikov (2011) The effect of small quenched noise on connectivity properties of random interlacements. 
{\it Electron. J. of Prob.} {\bf 18} (4), 1--20.

\bibitem{RodSz}
P. F. Rodriguez and A.-S. Sznitman (2012)
Phase transition and level-set percolation for the Gaussian free field. 
{\it Comm. Math. Phys.} {\bf 320(2)}, 571--601.
 
\bibitem{SS04}
V. Sidoravicius and A.-S. Sznitman (2004) 
Quenched invariance principles for walks on clusters of percolation or among random conductances. 
{\it Prob. Th. Rel. Fields} {\bf 129}, 219--244.
 
\bibitem{SidoraviciusSznitman_RI}
V. Sidoravicius and A.-S. Sznitman (2009) Percolation for the Vacant Set of Random Interlacements.
{\it Comm. Pure Appl. Math.} {\bf 62} (6), 831--858.
 
\bibitem{SznitmanAM}
A.-S. Sznitman (2010) Vacant set of random interlacements and percolation. 
{\it Ann. Math.} {\bf 171} (2), 2039--2087.
 
\bibitem{Sznitman:Decoupling}
A.-S. Sznitman (2012) 
Decoupling inequalities and interlacement percolation on $G\times \Z$. 
{\it Invent. Math.} {\bf 187} (3), 645--706.


\end{thebibliography}
\end{document}